\theoremstyle{plain}
\newtheorem{thm}{Theorem}[section]
\newtheorem{prop}[thm]{Proposition}
\newtheorem{lem}[thm]{Lemma}
\newtheorem{cor}[thm]{Corollary}
\theoremstyle{definition}
\newtheorem{defn}[thm]{Definition}
\theoremstyle{remark}
  \def\C{{\mathbb{C}}}   \def\F{{\mathbb{F}}}        \def\N{{\mathbb{N}}}    \def\R{{\mathbb{R}}}  \def\T{{\mathbb{T}}}      \def\Z{{\mathbb{Z}}}
\newcommand\Aut{\operatorname{Aut}}
\newcommand\Fix{{\operatorname{Fix}}}
\newcommand\id{\operatorname{id}}
\renewcommand\Im{\operatorname{Im}}
\newcommand\Meas{{\operatorname{Meas}}}
\newcommand\Proj{\operatorname{Proj}}
\newcommand\Prob{\operatorname{Prob}}
\newcommand\rea{\operatorname{Re}}
\newcommand\supp{\operatorname{supp}}
\newcommand\Stab{\operatorname{Stab}}
\newcommand\Sub{\operatorname{Sub}}
\newcommand\topo{\operatorname{top}}
\newcommand{\actson}{\curvearrowright}
\newcommand{\actons}{\curvearrowright}
\newcommand{\acston}{\actson}
\newcommand{\ip}[1]{\langle #1 \rangle}
\begin{document}
\title[Max-min theorems, homoclinic points, completely positive entropy]{Max-Min theorems for weak containment, square summable homoclinic points, and completely positive entropy}      % Enter your title between curly braces
\author{Ben Hayes}\thanks{The author gratefully acknowledges support from  NSF Grant DMS-1827376.}
\address{University of Virginia\\
          Charlottesville, VA 22904}
\email{brh5c@virginia.edu}

\begin{abstract}
    We prove a max-min theorem for weak containment in the context of algebraic actions. Namely, we show that given an algebraic action $G\actson X,$ there is a  maximal, closed $G$-invariant subgroup $Y$ of $X$ so that $G\actson (Y,m_{Y})$ is weakly contained in a Bernoulli shift. This subgroup is also the minimal closed subgroup so that any action weakly contained in a Bernoulli shift is $G\actson X/Y$-ergodic ``in the presence of $G\actson X$". We give several applications, including a major simplification of the proof that measure entropy equals topological entropy for principal algebraic actions whose associated convolution operator is injective. We also deduce from our techniques that algebraic actions whose square summable homoclinic group is dense have completely positive entropy when the acting group is sofic.
\end{abstract}

\maketitle

\textbf{Keywords:} Weak containment, homoclinic points, completely positive entropy.

\textbf{MSC:} 37A15, 37A35, 37A55,  22D25

\tableofcontents

\section{Introduction}

Let $G$ be a countable, discrete group. An \emph{algebraic action} of $G$ is an action $G\actson X$ by continuous automorphisms of a compact group $X.$ If we equip $X$ with the Haar measure $m_{X}$, this action becomes a probability-measure preserving action. We are typically interested in purely ergodic theoretic properties of $G\actson (X,m_{X})$ (e.g. ergodicity, mixing, complete positive entropy etc). However, the additional topological and algebraic structure that $X$ possesses provide additional tools to study the ergodic theoretic properties of $G\actson (X,m_{X})$ and one is able to efficiently establish an abstract theory as well as machinery around the ergodic theory of algebraic actions.

For example, in \cite{Me13} we showed, when $G$ is sofic, the Pinsker factor (properly defined) of an algebraic action is itself an algebraic action. In \cite{HayesLW*}, we gave a precise theoretical framework for studying the equality of topological and measure entropy for algebraic actions. Results in \cite{Me12} give a sufficient condition for this equality to occur, and our results in \cite{HayesLW*} give a complete solution of when this sufficient condition is satisfied. The solution provided in \cite{HayesLW*} is provided in terms of a max-min result inside a complete lattice naturally associated to the algebraic action.

One of the main results in this paper is to obtain a similar max-min result in the context of  \emph{weak containment} (as defined in Chapter II.10 of \cite{KechrisGA}) of actions. Given two probability measure-preserving actions $G\actson (Y,\nu),$ $G\actson (Z,\zeta),$ saying that $G\actson (Y,\nu)$ is weakly contained in $G\actson (Z,\zeta)$ roughly means that every ``finitary piece" of $G\actson (Y,\nu)$ can be approximated by a ``finitary piece of $G\actson (Z,\zeta)$". It is a weak form of saying that $G\actson (Y,\nu)$ is a factor of $G\actson (Z,\zeta),$ and many properties that pass through factor maps also pass through weak containment. We say that $G\actson (Z,\zeta),G\actson (Y,\nu)$ are weakly equivalent if each weakly contains the other. Weak containment is an area of research of significant current interest (see \cite{AasPopa, AbertWeiss, LewisRobinSWE, BurtonKechrisMax, BurKechWC, BurLupTam, AdrianRobinWCR, RobinWEClass}).

In \cite{MeWE}, we gave a class of algebraic actions which were weakly equivalent to Bernoulli shifts. These examples were all \emph{balanced} algebraic actions (i.e. the action of $G$ on the Pontryagin dual of $\Z(G)^{\oplus n}/\Z(G)^{\oplus n}f$ for some $f\in M_{n}(\Z(G))$). The study of \emph{balanced} algebraic actions includes the case of \emph{principal} algebraic actions and has been an active area of research in recent years.  Our first major result in this paper is to give a precise max-min principle which says that given \emph{any} algebraic action, we can find a maximal, $G$-invariant closed subgroup $Y$ of $X$ so that $G\actson (Y,m_{Y})$ is weakly contained in a Bernoulli shift. The subgroup $Y$ also ends up being the \emph{minimal} closed subgroup satisfying a certain ergodicity property. Given a probability measure-preserving action $G\actson (Z,\zeta)$ and a Borel action $G\acston Y$ on a standard Borel space we say that $G\actson (Z,\zeta)$ is $G\actson Y$-\emph{ergodic} if for every $G$-equivariant, measurable $\Theta\colon Z\to Y$ we have that $\Theta$ is almost surely constant. We actually need a slight weakening of this. Suppose that $W$ is a standard Borel space, that $G\actson W$ is a Borel action, and that $G\actson Y$ is a \emph{Borel} factor with factor map $q\colon W\to Y.$ We say that $G\acston (Z,\zeta)$ is \emph{$G\actson Y$-ergodic in the presence of $G\actson W$} if for every $G$-equivariant, measurable map $\Theta\colon Z\to W$ we have that $q\circ \Theta$ is almost surely constant. Said differently: every equivariant, measurable map $Z\to Y$ is almost surely constant \emph{provided it has a lift to an equivariant map $Z\to W.$} With this terminology, we can now state the main result of this paper.

\begin{thm}\label{T:main theorem max min}
Let $G$ be a countable, discrete group and $G\actson  X$ an algebraic action.
\begin{enumerate}[(i)]
\item There is a largest closed, $G$-invariant subgroup $Y$ of $X$ so that $G\actson (Y,m_{Y})$ is weakly contained in a Bernoulli shift. \label{I:max thing intro}
\item If $Y$ is as in (\ref{I:max thing intro}), then $Y$ may be characterized as the smallest closed, $G$-invariant subgroup of $X$ with the property that every action weakly contained in a Bernoulli shift is $G\actson X/Y$-ergodic in the presence of $G\actson X.$
\end{enumerate}
\end{thm}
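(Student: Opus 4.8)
The plan is to study the family
\[
\cS=\bigl\{Z\subseteq X : Z\text{ a closed, }G\text{-invariant subgroup with }G\actson(Z,m_Z)\text{ weakly contained in a Bernoulli shift}\bigr\},
\]
and to produce a largest element of it, which will settle both parts simultaneously. Two inputs are used throughout. The first is the standard permanence of ``weakly contained in a Bernoulli shift'' under passing to factors and to countable products, together with the fact that a countable product of Bernoulli shifts is again a Bernoulli shift. The second, which I regard as the technical heart, is an approximation lemma: \emph{if $G\actson X$ is a continuous action on a compact metrizable space and $\mu_n\to\mu$ weak$^\ast$ among $G$-invariant Borel probability measures, each $G\actson(X,\mu_n)$ being weakly contained in a Bernoulli shift, then so is $G\actson(X,\mu)$.} To prove this, replace the ambient shifts by their countable product $\mathbf B$ (still Bernoulli), so $G\actson(X,\mu_n)$ is weakly contained in $\mathbf B$ for every $n$; then, given a finite $F\subseteq G$, a finite Borel partition $\cP$ of $X$, and $\eps>0$, use metrizability to approximate $\cP$ in $\mu$-measure by a partition $\cP'$ whose pieces are $\mu$-continuity sets (Boolean combinations of balls with $\mu$-null boundary). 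Since $\mu$ is $G$-invariant, each $gP_i'\cap P_j'$ is again a $\mu$-continuity set, so $\mu_n(gP_i'\cap P_j')\to\mu(gP_i'\cap P_j')$ by the portmanteau theorem; taking $n$ large and then a partition of $\mathbf B$ matching $G\actson(X,\mu_n)$ for the data $(F,\cP',\eps/3)$ produces one matching $G\actson(X,\mu)$ for $(F,\cP,\eps)$.

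For part (\ref{I:max thing intro}): given finitely many $Z_1,\dots,Z_k\in\cS$, the convolution $\sigma:=m_{Z_1}*\cdots*m_{Z_k}$ is the pushforward of $m_{Z_1}\times\cdots\times m_{Z_k}$ under the continuous $G$-equivariant multiplication map $Z_1\times\cdots\times Z_k\to X$, so $G\actson(X,\sigma)$ — and likewise each $G\actson(X,\sigma^{*m})$ — is weakly contained in a Bernoulli shift; since $\supp\sigma=\overline{Z_1\cdots Z_k}$ contains the identity and generates the closed $G$-invariant subgroup $H:=\overline{\langle Z_1\cup\cdots\cup Z_k\rangle}$, the It\^o--Kawada equidistribution theorem gives $\sigma^{*m}\to m_H$ weak$^\ast$, so $H\in\cS$ by the lemma. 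Thus $\cS$ is closed under finite joins. Next, if $Z_n\uparrow$ in $\cS$ with $Z:=\overline{\bigcup_n Z_n}$, every weak$^\ast$ cluster point of $(m_{Z_n})_n$ is supported on $Z$ and invariant under every $Z_m$, hence equals $m_Z$; so $m_{Z_n}\to m_Z$ and $Z\in\cS$ by the lemma. Finally, since $X$ is second countable, the join of the whole family $\cS$ equals the join of a countable subfamily, hence lies in $\cS$ by the two closure properties; this largest element $Y$ is the subgroup asserted in (\ref{I:max thing intro}).

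For part (ii), let $Y$ be as in (\ref{I:max thing intro}) and $q\colon X\to X/Y$ the quotient. If $G\actson(Z,\zeta)$ is weakly contained in a Bernoulli shift and $\Theta\colon Z\to X$ is $G$-equivariant measurable, then $\mu:=\Theta_*\zeta$ makes $G\actson(X,\mu)$ a factor of $G\actson(Z,\zeta)$, hence weakly contained in a Bernoulli shift. Let $\tmu$ be the image of $\mu$ under inversion (a $G$-equivariant homeomorphism of $X$, since $G$ acts by automorphisms) and set $\eta:=\tmu*\mu$: this is $G$-invariant, has the identity in its support, and each $\eta^{*m}$ is weakly contained in a Bernoulli shift by the same product-and-factor argument. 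With $K$ the closed $G$-invariant subgroup generated by $\supp\eta$, It\^o--Kawada gives $\eta^{*m}\to m_K$, so $G\actson(K,m_K)$ is weakly contained in a Bernoulli shift by the lemma, whence $K\in\cS$ and $K\subseteq Y$ by maximality. Since $\supp\eta\supseteq(\supp\mu)^{-1}\supp\mu$, the support of $\mu$ lies in a single coset of $K\subseteq Y$, so $q\circ\Theta$ is $\zeta$-almost surely constant; this is precisely the required ergodicity property of $Y$. For the converse, if $Y'$ is any closed $G$-invariant subgroup with this property, apply it to $G\actson(Y,m_Y)$ — weakly contained in a Bernoulli shift by (\ref{I:max thing intro}) — and the inclusion $Y\hookrightarrow X$: the composite $q'\circ\Theta\colon Y\to X/Y'$ is continuous and $m_Y$-almost surely constant, hence genuinely constant with value $q'(1_X)$ because $m_Y$ has full support, so $Y=\Theta(Y)\subseteq q'^{-1}(q'(1_X))=Y'$; thus $Y$ is the smallest such subgroup.

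The step I expect to need real work is the approximation lemma; everything else is formal bookkeeping with weak containment together with the classical It\^o--Kawada theorem on convolution powers in compact groups. A secondary point is the non-abelian case, where $X/Y$ should be read as the homogeneous $G$-space of left cosets (not necessarily a group) and one must keep track of which side the cosets in the It\^o--Kawada step lie on — this is why $\eta$ was taken to be $\tmu*\mu$ rather than $\mu*\tmu$ — but it is not a genuine difficulty, and none of it arises when $X$ is abelian, in which case moreover $\sigma=m_{Z_1}*\cdots*m_{Z_k}$ is already the Haar measure $m_{Z_1+\cdots+Z_k}$ and the finite-join step needs no limiting argument.
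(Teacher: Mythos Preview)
Your proof is correct and follows essentially the same strategy as the paper: show that the set of $G$-invariant probability measures $\mu$ on $X$ with $G\actson(X,\mu)$ weakly contained in a Bernoulli shift is closed under convolution, the $*$-operation, and weak$^*$ limits, then use convergence of convolution powers to Haar measure both to produce the largest subgroup and to force the support of any such $\mu$ into a single coset. The only differences are packaging: where you invoke It\^o--Kawada as a black box, the paper proves the needed convergences $(\mu^{*}*\mu)^{*n}\to m_{\overline{\langle\supp(\mu^{*}*\mu)\rangle}}$ and $(m_{Y_1}*m_{Y_2}*m_{Y_1})^{*n}\to m_{Y_1\vee Y_2}$ directly via Peter--Weyl and the spectral theorem for finite-dimensional representations; and where you spell out the approximation lemma in detail, the paper simply asserts that the relevant class of measures is weak$^*$ closed.
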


We remark that one can easily formulate what it means for a probability measure-preserving action $G\actson (X,\mu)$ to be weakly contained in a class $\mathcal{C}$ of actions.  Theorem \ref{T:main theorem max min} also works for many other classes of actions, provided this class has the property that it is closed under weak containment and products. In this case, Theorem \ref{T:main theorem max min} holds verbatim with every instance of a Bernoulli shift replaced by an action in the class $\mathcal{C}$ (see Corollary \ref{C:structure corollary}). Our first application  of Theorem \ref{T:main theorem max min} is the following result, showing that many of the concrete algebraic actions that have been of relevance to entropy theory are weakly contained in Bernoulli shifts. We let $\Z(G)$ denote the integral group ring of $G.$ If $f\in M_{m,n}(\Z(G)),$ then $f$ naturally induces a convolution operator
$\lambda(f)\colon \ell^{2}(G)^{\oplus n}\to \ell^{2}(G)^{\oplus m}$
(see Section \ref{S:extend convolve} for the precise formula).

\begin{thm}\label{T:weak containment intro}
Let $G$ be a countable, discrete, group and let $f\in M_{n}(\Z(G))$ be such that $\lambda(f)$ is injective on $\ell^{2}(G)^{\oplus n}.$ Then $G\actson (X_{f},m_{X_{f}})$ is weakly contained in a Bernoulli shift.
\end{thm}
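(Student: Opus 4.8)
The plan is to deduce this from the max--min theorem. Equip $X_{f}=\widehat{\Z(G)^{\oplus n}/\Z(G)^{\oplus n}f}$ with its Haar measure, let $Y\le X_{f}$ be the largest closed $G$-invariant subgroup with $G\actson(Y,m_{Y})$ weakly contained in a Bernoulli shift (Theorem \ref{T:main theorem max min}\,(i)), and aim to show $Y=X_{f}$. By the characterization in Theorem \ref{T:main theorem max min}\,(ii) it is enough to prove that $X_{f}$ is the \emph{only} closed $G$-invariant subgroup $W$ for which every action weakly contained in a Bernoulli shift is $G\actson X_{f}/W$-ergodic in the presence of $G\actson X_{f}$; equivalently, given any proper closed $G$-invariant $W\subsetneq X_{f}$, I must produce an action $G\actson(A,\alpha)$ weakly contained in a Bernoulli shift together with a $G$-equivariant measurable map $\Theta\colon A\to X_{f}$ whose composition with $X_{f}\to X_{f}/W$ is not essentially constant.

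By Pontryagin duality such a $\Theta$ is the same data as measurable circle-valued functions $u_{1},\dots,u_{n}$ on $A$ satisfying the $\Z(G)$-linear system of congruences cut out by $f$ (the system defining the ``$\T$-points of $X_{f}$''), with the additional requirement that the combination of the $u_{i}$ named by some nonzero class of $W^{\perp}\subseteq\Z(G)^{\oplus n}/\Z(G)^{\oplus n}f$ be nonconstant. The coordinate functions on $(X_{f},m_{X_{f}})$ itself form such a system, but that action is exactly what we are trying to show is Bernoulli-contained, so the construction must go through an approximation. Here injectivity of $\lambda(f)$ enters: since $M_{n}(L(G))$ is a finite von Neumann algebra, $\lambda(f)$ injective forces $\lambda(f^{*})$ injective, hence $\lambda(f)$ has dense range and $\Z(G)^{\oplus n}f$ is dense in $\ell^{2}(G)^{\oplus n}$; equivalently, the spectral measure of $\lambda(f^{*}f)$ with respect to the canonical trace puts no mass at $\{0\}$. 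Using this, I would approximate the (in general unbounded) operator $\lambda(f)^{-1}$ by the bounded operators obtained from the spectral truncations $\chi_{[1/k,\infty)}(\lambda(f^{*}f))\uparrow 1$, and from these build a sequence of auxiliary algebraic actions --- weakly contained in Bernoulli shifts by (building on) the constructions of \cite{MeWE} for the truncated, ``invertible'' pieces --- that map $G$-equivariantly into $X_{f}$ with images increasing to a dense subgroup. By maximality of $Y$ together with closure of the class of Bernoulli-contained actions under products and weak limits (the permanence also underlying Corollary \ref{C:structure corollary}) this gives $Y=X_{f}$; equivalently, for a fixed proper $W$ one member of the sequence already detects $W^{\perp}$, which is all that Theorem \ref{T:main theorem max min}\,(ii) demands.

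The main obstacle --- and the reason this does not follow at once from \cite{MeWE} --- is that injectivity of $\lambda(f)$ does \emph{not} give a closed range: $\lambda(f)^{-1}$ is genuinely unbounded in general, so $X_{f}$ need have no summable, indeed no nonzero square-summable, homoclinic point. For example, for $G=\Z$ and $f=2-z-z^{-1}$ the operator $\lambda(f)$ is injective, but $1/(2-2\cos\theta)$ is not square-integrable near $\theta=0$, so the homoclinic approach is unavailable and the auxiliary actions cannot be taken to be honest Bernoulli shifts or honest algebraic quotients of them. The technical heart is therefore the truncation scheme: showing that each truncated object is \emph{actually} weakly contained in a Bernoulli shift (not merely in a limiting sense) and that the truncations exhaust $X_{f}$ --- it is exactly here, via the vanishing of the atom of the spectral measure of $\lambda(f^{*}f)$ at $0$, that the hypothesis is used in full. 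The reduction through Theorem \ref{T:main theorem max min}, the duality dictionary, and the permanence properties of weak containment are then formal.
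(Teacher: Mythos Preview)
Your high-level strategy matches the paper's: invoke the max--min theorem to get the largest $Y\leq X_{f}$ weakly contained in Bernoulli, then use approximate inverses $\xi_{k}$ to $f$ built from the spectral truncations $1_{[1/k,\infty)}(|\lambda(f)|)$ to show $Y=X_{f}$. The recognition that injectivity of $\lambda(f)$ in the finite von Neumann algebra $M_{n}(L(G))$ forces dense range, and that this is exactly what makes the truncations exhaust, is correct and is how the paper uses the hypothesis.

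However, there is a genuine gap in your execution. You propose to build ``auxiliary algebraic actions \dots\ that map $G$-equivariantly into $X_{f}$,'' but the truncated $\xi_{k}$ lie in $M_{n}(L_{\R}(G))$, not $M_{n}(\Z(G))$, and $f\xi_{k}$ is only \emph{approximately} the identity. Consequently the convolution maps $\Theta_{\xi_{k}}$ do \emph{not} land in $X_{f}$; they land only in the ambient $(\T^{n})^{G}$. There is no natural algebraic action attached to $\xi_{k}$ that sits inside $X_{f}$, and the pushforward measures $(\Theta_{\xi_{k}})_{*}(\nu^{\otimes G})$ are not supported on $X_{f}$ for any fixed $k$.

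The paper resolves this by abandoning the idea of mapping into $X_{f}$ at each stage. Instead it works with the pushforward measures $\mu_{k,\delta}$ on $(\T^{n})^{G}$ and takes a weak$^{*}$ limit $\mu$. The base measure is chosen as $\nu_{\delta}=\eta*\gamma_{\delta}$ where $\eta\in\Prob(\Z^{n})$ has $|\widehat{\eta}(x)|<1$ for $x\notin\Z^{n}$ and $\gamma_{\delta}$ is a Gaussian. This two-part choice does real work: for $\sigma\in r(f)(\Z(G)^{\oplus n})$ one gets $\widehat{\mu}(\sigma)=1$, so $\mu$ \emph{is} supported on $X_{f}$; for $\alpha\in r(f)(\ell^{2}(G,\R)^{\oplus n})\setminus r(f)(\Z(G)^{\oplus n})$ the $\eta$-factor gives $|\widehat{\mu}(\alpha)|<1$; and for $\alpha\notin r(f)(\ell^{2}(G,\R)^{\oplus n})$ one has $\|r(\xi_{k})\alpha\|_{2}\to\infty$, and it is the Gaussian factor $\exp(-\delta\|r(\xi_{k})\alpha\|_{2}^{2})$ that forces $\widehat{\mu}(\alpha)=0$. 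One then applies the coset-support consequence of the max--min theorem to this \emph{single} limit measure $\mu$: since $\supp(\mu)\subseteq x+Y$ for some $x$, any $\alpha\in Y^{o}$ has $|\widehat{\mu}(\alpha)|=1$, whence $\alpha\in r(f)(\Z(G)^{\oplus n})$ and $Y=X_{f}$. The Gaussian perturbation and the dichotomy between $\alpha\in r(f)(\ell^{2})$ and $\alpha\notin r(f)(\ell^{2})$ are the missing technical ingredients in your sketch.
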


For example, if $f\in \Z(G)$ and is injective as a left convolution operator on $\ell^{2}(G)$, then $G\actson (X_{f},m_{X_{f}})$ is weakly contained in  a Bernoulli shift. If $G$ is torsion-free, then the Atiyah conjecture would predict that \emph{every} nonzero element of $\Z(G)$ is injective as a left-convolution operator on $\ell^{2}(G).$ We refer the reader to \cite[Chapter 10]{Luck} for examples of groups which satisfy the Atiyah conjecture.
The requirement that every nonzero element of $\Z(G)$ is injective as a left convolution operator on $\ell^{2}(G)$ is also (a priori) weaker than  requiring that $G$ be torsion-free and satisfy the Atiyah conjecture. For example, this is known for every left-orderable group (see \cite{LinnellZDC}). Left-orderable groups include polycyclic groups, certain groups of
intermediate growth (\cite{GRMakiOrder}), Thompson’s group, certain mapping class groups of connected surfaces with boundary (\cite{OrderMCG}),
and is closed under free products (see \cite[Theorem 2.7]{Passman}) . It is also direct to show that extensions of left-orderable groups by
left-orderable groups are left-orderable. Recent results (see \cite{HydeLodLeftOrd}, also \cite[Theorem 1.7]{KimKobLodLeftOrd})  show that there is a continuum family
$(G_{\alpha})_{\alpha}$ of pairwise non-isomorphic, simple, finitely generated, left orderable groups.

Knowing that an action is weakly contained in a Bernoulli shift is interesting in its own right. However, an additional significant consequence of Theorem \ref{T:weak containment intro} is a heavy simplification of part of the proof of the main result of \cite{Me5}. Note that being weakly contained in a Bernoulli shift implies strong soficity (as defined implicitly in \cite{AustinAdd} and explicitly in \cite[Definition 3.4]{Me13}) with respect to any sofic approximation. So Theorem \ref{T:weak containment intro} in particular implies that $G\actson (X_{f},m_{X_{f}})$ is strongly sofic. Section 5 of \cite{Me5} proves that $G\actson (X_{f},m_{X_{f}})$ is strongly sofic, but the proof there is the most technical portion of an already technical paper. Our proof of Theorem \ref{T:weak containment intro} is much easier than what is in \cite{Me5}, though it is certainly inspired by that proof. By \cite[Theorem 1.1]{Me12}, the fact that $G\actson (X_{f},m_{X_{f}})$ is strongly sofic allows one to show that the measure entropy of $G\actson (X_{f},m_{X_{f}})$ equals $\log \det_{L(G)}(f),$ only knowing that the topological entropy of $G\actson X_{f}$ is the logarithm of the Fuglede-Kadison determinant. The case of topological entropy is handled in Sections 3-4 of \cite{Me5} and is much easier to establish than the measure entropy case. So Theorem \ref{T:weak containment intro} can be used to give a major simplification of the proof that the measure entropy of $G\actson (X_{f},m_{X_{f}})$ is the logarithm of the Fuglede-Kadison determinant, which is a significant and important result in the field of sofic entropy as well as the study of algebraic actions.

As part of the proof of Theorem \ref{T:weak containment intro}, we establish a way to associate to every $m,n\in \N,$ $\xi\in M_{n,m}(\ell^{2}(G,\R)),$ and every $\nu\in \Prob(\R^{m})$ with mean zero and finite second moment, a $G$-equivariant, measurable map $\Theta_{\xi}\colon (\R^{m})^{G}\to (\T^{n})^{G}.$ This technique may be of independent interest. When $m=n=1,$ and $\xi\in c_{c}(G,\R),$ this map is simply given by right convolution (and there is a similar modification when one of $m,n>1$). We are able to extend the definition of $\Theta_{\xi}$ to the case of $\xi\in \ell^{2}(G,\R)$  by using a uniform continuity argument, as well as the completeness of both $\ell^{2}(G,\R)$ and the space of measurable maps $\R^{G}\to \T^{G}$ in the topology of convergence in measure with respect to $\nu^{\otimes G}$. See Section \ref{S:extend convolve} for the precise details.  This is similar to the results in \cite{MeWE}, and this idea of using convolution to prove weak containment goes back to \cite{BowenEntropy} but new arguments are required to extend convolution measurable to the case of $\xi\in \ell^{2}(G,\R).$  The work in \cite{BowenEntropy}  uses $\Theta_{\xi}$ for $\xi\in \ell^{1}(G,\R),$ and the definition in this case is much more transparent. If $\xi\in \ell^{1}(G,\R),$  then its image in $\T^{G}$ is called a \emph{summable homoclinic point}. Arguments involving convolution by a summable homoclinic point have a long history in the study of algebraic actions going back to Lind-Schmidt in \cite{LindSchmidtHomoc}, and similar arguments were also used to great effect in \cite{LindSchmidt1, LSVHomoc}. To the best of our knowledge, the work in \cite{MeWE} is the first that uses $\ell^{2}$ vectors instead of $\ell^{1}$ vectors in the context of weak containment. The results in \cite{MeWE} are about probability measures on $(\T^{n})^{G}$ associated to vectors in $M_{n}(\ell^{2}(G,\R)).$
Those results by themselves can be modified to prove Theorem \ref{T:weak containment intro}. However, the ability to exhibit these probability measures on $(\T^{n})^{G}$ as factors of Bernoulli measures enables us to effortlessly prove results on complete positive entropy of algebraic actions.

By entropy in this context, we mean measure entropy as defined by Bowen in \cite{Bow} (and in full generality by Kerr-Li in \cite{KLi}) for actions of sofic groups. The class of sofic groups is a large class which include all amenable groups, all linear groups, and is closed under free products with amalgamation over amenable subgroups, as well as all wreath products (see  \cite{DKP, HayesSale, LPaun, PoppArg}). Entropy for actions of sofic groups agrees with the usual entropy defined by Kieffer (see \cite{Kieff}) when the group is amenable,  by the results in \cite{BowenAmen,KLi2}. Sofic groups are the largest class of groups where it is known that one can define entropy so that entropy of a Bernoulli shift is equal to the entropy of the base. Sofic groups are also the largest class of groups where it is known that entropy can be defined to be a conjugacy invariant which distinguishes Bernoulli shifts with different base entropies. So the fact that our results show complete positive entropy for the class of actions of sofic groups should be taken to be optimal. We mention here that it is not known if all groups are sofic.

Recall that if $G\actson X$ is an algebraic action then the homoclinic group of $X,$ denoted $\Delta(G\actson X),$ is the subgroup of $X$ consisting of all $x\in X$ so that $\lim_{g\to\infty}gx=0.$ If $X$ is abelian, and $1\leq p<\infty,$ Chung-Li in \cite[Section 5]{ChungLi} defined the $p$-summable homoclinic group to be the set of $x\in X$ so that $\sum_{g\in G}|\chi(gx)|^{p}<\infty$ for all $\chi\in\widehat{X}.$
Here $|x+\Z|=\inf_{n\in \Z}|x+n|$ for all $n\in \Z.$ We let $\Delta^{(p)}(G\actson X)$ be the $p$-summable homoclinic group of $G\actson X.$ It is also easy to see that the $p$-summable homoclinic group is contained in the homoclinic group. It is easy to see that a square summable homoclinic point in $\T^{G}$ is the image of an element of $\ell^{2}(G,\R)$ under the canonical quotient map $\R^{G}\to \T^{G}.$ As mentioned before, associated to every $\xi\in \ell^{2}(G,\R),$ and to every $\nu\in \Prob(\Z)$ which has mean zero and a finite second moment, we have a $\nu^{\otimes G}$ measurable map $\Theta_{\xi}\colon \Z^{G}\to \T^{G}$ which is a measurable extension of convolving by the adjoint of $\xi$. The maps $(\Theta_{\xi})_{\xi\in \ell^{2}(G,\R)}$ along with Theorem \ref{T:main theorem max min} allow one to prove the following new result on complete positive entropy.

\begin{thm}\label{T:new theorem cpe}
Let $G$ be a countably infinite, discrete, group and let $G\actson X$ be an algebraic action with $X$ abelian. Suppose that $\Delta^{(2)}(G\actons X)$ is dense in $X.$ Let $N$ be the kernel of $G\actons X.$ Then the induced action $G/N\actson (X,m_{X})$ is weakly equivalent to a Bernoulli shift.
If $G$ is sofic, then $G\actson (X,m_{X})$ has completely positive entropy in the presence with respect to any sofic approximation of $G.$

\end{thm}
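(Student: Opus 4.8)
The plan is to deduce everything from Theorems \ref{T:main theorem max min} and \ref{T:weak containment intro} together with the machinery of the maps $\Theta_\xi$. First I would set up the quotient: replacing $G$ by $G/N$ we may assume $G\actson X$ is faithful (one must check that density of $\Delta^{(2)}$ descends to the faithful quotient, which is routine since the square-summable homoclinic group maps onto the square-summable homoclinic group of the quotient action — homoclinicity and summability of $\chi(gx)$ are preserved). Let $Y$ be the largest closed $G$-invariant subgroup of $X$ with $G\actson(Y,m_Y)$ weakly contained in a Bernoulli shift, as furnished by Theorem \ref{T:main theorem max min}(\ref{I:max thing intro}). The heart of the argument is to show $Y=X$; once we know this, $G\actson(X,m_X)$ is weakly contained in a Bernoulli shift, and since it is a probability-measure preserving action of an infinite group with positive entropy — in fact a Bernoulli shift is weakly contained in \emph{any} free pmp action, and algebraic actions with dense homoclinic group are free (or one invokes that a factor/weakly-equivalent argument forces Bernoullicity in the weak-equivalence class) — one gets weak equivalence with a Bernoulli shift.

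To prove $Y=X$, I would use the characterization in Theorem \ref{T:main theorem max min}(ii): $Y$ is the smallest closed $G$-invariant subgroup such that every action weakly contained in a Bernoulli shift is $G\actson X/Y$-ergodic in the presence of $G\actson X$. So it suffices to show that if $Z$ is a proper closed $G$-invariant subgroup of $X$, then some action weakly contained in a Bernoulli shift fails to be $G\actson X/Z$-ergodic in the presence of $G\actson X$; equivalently, there is a $G$-equivariant measurable map $\Theta\colon (\text{Bernoulli base})^G\to X$ whose composition with $X\to X/Z$ is non-constant. This is exactly what the homoclinic maps provide: pick $\xi\in \Delta^{(2)}(G\actson X)$ with $\xi\notin Z$ — possible since $\Delta^{(2)}$ is dense and $Z$ is a proper closed subgroup, so $\Delta^{(2)}\not\subseteq Z$. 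Realizing $\xi$ as (the image of) an element of $\ell^2(G,\R)$ — here one uses the structure of abelian $X$ as a quotient of some $(\T^n)^G$, reducing after a character computation to the case $X\subseteq \T^G$ and $\xi\in\ell^2(G,\R)$ — and choosing $\nu\in\Prob(\Z)$ with mean zero and finite second moment, the map $\Theta_\xi\colon \Z^G\to \T^G$ constructed in Section \ref{S:extend convolve} is $G$-equivariant, measurable, lands in $X$ (by closedness of $X$ and the fact that finitely supported convolutions land in $X$, passing to the $\ell^2$-limit), and its image is not essentially contained in any coset of $Z$ because $\xi\notin Z$ and one can detect this with a character $\chi\in\widehat{X}$ that kills $Z$ but not $\xi$ — evaluating $\chi\circ\Theta_\xi$ gives a nonconstant function on $\Z^G$. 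Since $\Z^G$ with $\nu^{\otimes G}$ is a Bernoulli shift, this produces the required witness, so no proper $Z$ can equal $Y$, forcing $Y=X$.

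The main obstacle, I expect, is the step verifying that $\Theta_\xi$ genuinely takes values in the subgroup $X\subseteq \T^G$ (or its matrix analogue) and that $\chi\circ\Theta_\xi$ is non-constant for a suitable separating character — this requires care because $\Theta_\xi$ is only defined as an a.e.\ limit in measure of the finitely-supported convolution maps, so one must argue that the a.e.-limit of maps into the closed subgroup $X$ still lands in $X$ (fine, $X$ is closed), and, more delicately, that the limiting map is not a.s.\ constant modulo $Z$. For the latter, I would compute the distribution of $\chi(\Theta_\xi(\cdot))$: writing $\chi$ in coordinates, $\chi\circ\Theta_\xi$ is an $\ell^2$-limit of finite linear combinations $\sum_g c_g\, \omega_g \pmod 1$ of the i.i.d.\ coordinates $\omega_g\sim\nu$ with not all $c_g\in\Z$ (because $\chi(\xi)\neq 0$ in $\T$), and such a sum is manifestly non-degenerate — it has strictly positive variance of its $\T$-valued version, or more simply is not a.s.\ constant because a nontrivial finite subsum already isn't. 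The rest — descending to the faithful quotient, the reduction of general abelian $X$ to a subshift of $(\T^n)^G$ via Pontryagin duality, and the passage from weak containment in a Bernoulli shift plus positive entropy to weak equivalence with a Bernoulli shift and hence completely positive entropy (citing that CPE is inherited in the weak equivalence class of a Bernoulli shift, as one has weak equivalence and CPE is detected by weak equivalence for actions weakly equivalent to Bernoulli shifts) — is bookkeeping. Finally, soficity of $G$ is only needed for the sofic-entropy statement, where weak equivalence to a Bernoulli shift immediately yields completely positive entropy in the presence with respect to any sofic approximation.
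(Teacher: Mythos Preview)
Your approach to the weak-containment half is close to the paper's, though you use the ``min'' side of Theorem~\ref{T:main theorem max min} where the paper uses the ``max'' side. The paper embeds $X$ in $(\T^{\N})^{G}$, picks $\xi_{j}\in\ell^{2}(G,\R)^{\infty}$ with $X=\bigvee_{j}X^{\xi_{j}}$, and then uses Lemma~\ref{L:strong weak equiv} to show each $G\actson(X^{\xi_{j}},m_{X^{\xi_{j}}})$ is weakly contained in a Bernoulli shift; since the maximal $Y$ from Corollary~\ref{C:structure corollary} contains all of these and is closed, $Y=X$. Your route via the minimality characterization and a single $\Theta_{\xi}$ witnessing non-ergodicity in $X/Z$ could be made to work, but the non-constancy check you sketch is looser than what the paper actually proves (namely that the pushforward measures converge to Haar on $X^{\xi}$, which is far stronger and is what is needed later). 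The reduction from $G$ to $G/N$ also needs the input that $N$ is finite (via mixing and \cite{RobinAF}), which you do not address; one cannot simply ``replace $G$ by $G/N$'' before knowing this, since density of $\Delta^{(2)}$ for the $G/N$-action is not automatic.

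The genuine gap is in the CPE step. You assert that weak equivalence to a Bernoulli shift ``immediately yields completely positive entropy in the presence,'' and that ``CPE is detected by weak equivalence.'' This is false: weak equivalence to a Bernoulli shift is a very soft condition (by Ab\'{e}rt--Weiss any free action already weakly contains a Bernoulli shift), and CPE in the presence is \emph{not} a weak-equivalence invariant. The paper is explicit about this distinction in the paragraph following Lemma~\ref{L:strong weak equiv}: knowing only that $m_{X}$ is a weak$^{*}$ limit of Bernoulli-factor measures on the ambient $(\T^{m})^{G}$ gives weak containment, but the CPE conclusion requires the sharper fact that $m_{X^{\xi}}$ is a limit of Bernoulli-factor measures \emph{supported on $X^{\xi}$ itself} (this uses Theorem~\ref{T:pushforward props}(\ref{I:controlling the image of conv}) with $\nu$ supported on $\Z^{k}$). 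The paper then argues as follows: by Corollary~\ref{C:generating  by cpe subgroups} it suffices to treat a single $X^{\xi}$; by strong soficity and \cite{Me13} the outer Pinsker factor of $G\actson(X^{\xi},m_{X^{\xi}})$ is $X^{\xi}/Y$ for some closed $G$-invariant $Y$; pushing the Bernoulli-factor measures $\zeta_{n}$ on $X^{\xi}$ forward to $X^{\xi}/Y$ gives measures $\eta_{n}$ that (i) converge to $m_{X^{\xi}/Y}$ and (ii) satisfy $h_{(\sigma_{k})_{k}}(G\actson(X^{\xi}/Y,\eta_{n}):(X^{\xi},\zeta_{n}))\le 0$ by the Pinsker property combined with \cite{Me12}; since each $(X^{\xi},\zeta_{n})$ is an honest Bernoulli factor, Kerr's theorem \cite{KerrCPE} forces each $\eta_{n}$ to be a Dirac mass, so $X^{\xi}/Y$ is trivial. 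None of this follows from weak equivalence alone, and your proposal contains no substitute for this entropy-in-the-presence argument.
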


We remark that the results in Section \ref{S:extend convolve} rely on the fact that if $\nu\in \Prob(\R)$ has mean zero and a finite second moment, then the Fourier transform of $\nu$, denoted $\widehat{\nu},$ is $C^{2}$ with $\widehat{\nu}(0)=1,\widehat{\nu}'(0)=0.$ This implies that $\widehat{\nu}(t)=1+O(t^{2})$ as $t\to 0,$ and this is crucially what we use to define $\Theta_{\xi}$ as well as to compute $(\Theta_{\xi})_{*}(\nu^{\otimes G}).$ If we wanted to extend $\Theta_{\xi}$ to a large class of vectors, say $\xi \in \ell^{p},$ we would need to consider probability measures $\nu$ with the property that $\widehat{\nu}(t)=1+O(t^{p}).$ It is a well known fact once $p>2$ \emph{the only such measure is the dirac mass at $0$}. In fact,
\[\rea\left(\frac{1-\widehat{\nu}(t)}{t^{2}}\right)=2\int \left(\frac{\sin (\pi tx)}{t}\right)^{2}\,d\nu(x),\mbox{ for all $t\ne 0.$}\]
By Fatou's Lemma, the above shows that if $\widehat{\nu}(t)=1+O(t^{p})$ with $p>2,$ then $\int |x|^{2}\,d\nu(t)=0,$ and so $\nu=\delta_{0}.$
Thus we cannot use the same arguments to extend $\Theta_{\xi}$ to the case $\xi\in \ell^{p}(G)$ with $p>2$ in a way that $\Theta_{\xi}$ varies continuously in $\xi.$ For this reason, it is unlikely that one can prove Theorem \ref{T:new theorem cpe} by only assuming that $\Delta^{(p)}(G\actson X)$ is dense in $X$ if $p>2.$ See Propositions \ref{P:cant extend}, \ref{P:cant extend seconds} for more detailed results.

Suppose that  $f\in M_{n}(\Z(G))$ and $\lambda(f)\colon \ell^{2}(G)^{\oplus n}\to \ell^{2}(G)^{\oplus n}$ is the associated convolution operator (see Section \ref{S:extend convolve} for the precise definitions). If $f$ has an $\ell^{2}$ formal inverse in the sense of \cite{MeWE}, then $G\actson X_{f}$ has a square summable homoclinic point. Additionally if $\lambda(f)$ is invertible, then $f$ has an $\ell^{2}$ formal inverse. So
Theorem \ref{T:new theorem cpe} is a common generalization of both \cite[Corollary 1.5]{Me13} and \cite[Theorem 1.1]{MeWE}. A key difference between Theorem \ref{T:new theorem cpe} and \cite[Corollary 1.5]{Me13} is that Theorem \ref{T:new theorem cpe} applies not only to actions taking a very specific form, such as balanced algebraic actions, but gives an abstract criterion that one can check to show that an action has completely positive entropy. Moreover, we are able to associate to \emph{any} closed, $G$-invariant subspace $\mathcal{H}$ of $\ell^{2}(G)^{\oplus n}$ an algebraic subshift $X^{\mathcal{H}}$ of $(\T^{G})^{\oplus n}$ so that $X^{\mathcal{H}}$ has dense square summable homoclinic group (see Definition \ref{D:homoclinc assoicated to a rep}). So Theorem \ref{T:new theorem cpe} applies to a very large class of actions.

Square summable homoclinic points include summable homoclinic points and coincide with homoclinic points if $X$ is totally disconnected. So we obtain the following corollary for free.

\begin{cor}\label{C:easy stuff intro}
Let $G$ be a countably infinite, discrete, group and let $G\actson X$ be an algebraic action.
If $X$ is totally disconnected and abelian, and $\Delta(G\actson X)$ is dense in $X,$ then $G\actson (X,m_{X})$ is weakly equivalent to a Bernoulli shift. If $G$ is sofic, then $G\actson (X,m_{X})$  has completely positive entropy in the presence with respect to any sofic approximation of $G.$
\end{cor}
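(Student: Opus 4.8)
The plan is to read off Corollary \ref{C:easy stuff intro} from Theorem \ref{T:new theorem cpe}. The only point that needs an argument is the one already flagged before the statement: when $X$ is compact, abelian and \emph{totally disconnected}, the square summable homoclinic group coincides with the ordinary homoclinic group. More precisely, it is enough to establish the inclusion $\Delta(G\actson X)\subseteq\Delta^{(2)}(G\actson X)$, since the reverse inclusion holds for any abelian $X$ and, in any case, once $\Delta(G\actson X)$ is dense in $X$ so is $\Delta^{(2)}(G\actson X)$. Granting this, Theorem \ref{T:new theorem cpe} applies verbatim and gives both conclusions: with $N$ the kernel of $G\actson X$, the faithful action $G/N\actson(X,m_X)$ is weakly equivalent to a Bernoulli shift, and if $G$ is sofic then $G\actson(X,m_X)$ has completely positive entropy in the presence with respect to any sofic approximation of $G$.

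To prove the inclusion, recall that for a compact abelian group $X$, Pontryagin duality identifies total disconnectedness of $X$ with $\widehat X$ being a torsion group. Hence every $\chi\in\widehat X$ has finite order, say $k=k(\chi)$, so that $\chi(X)$ lies inside the finite group $\mu_k\subseteq\T$ of $k$-th roots of unity. Since $\mu_k$ is finite it carries the discrete topology as a subspace of $\T$; therefore $\ker\chi=\chi^{-1}(0)$ is an open subgroup of $X$, and $0$ is an isolated point of $\chi(X)$ for the metric $|\cdot|$ on $\T$.

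Now let $x\in\Delta(G\actson X)$, i.e. $gx\to 0$ as $g\to\infty$, and fix $\chi\in\widehat X$. Since $\ker\chi$ is an open neighbourhood of $0$ in $X$, we have $gx\in\ker\chi$, equivalently $|\chi(gx)|=0$, for all but finitely many $g\in G$; hence $\sum_{g\in G}|\chi(gx)|^{2}$ is a finite sum, in particular finite. As $\chi\in\widehat X$ was arbitrary, $x\in\Delta^{(2)}(G\actson X)$, which is what we wanted. Invoking Theorem \ref{T:new theorem cpe} then finishes the proof.

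I do not anticipate a genuine obstacle here: essentially nothing is required beyond the elementary observation about homoclinic groups above, which is exactly the sense in which the corollary is obtained ``for free''. Any remaining subtlety — for instance that the weak-equivalence conclusion is naturally phrased for the faithful action of $G/N$ rather than of $G$ itself — is already absorbed into the statement of Theorem \ref{T:new theorem cpe} and needs no extra work.
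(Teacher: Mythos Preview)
Your proposal is correct and follows essentially the same route as the paper: the paper's proof is the one-line observation that $X$ totally disconnected implies $\widehat{X}$ is torsion, so the homoclinic group and the square-summable homoclinic group coincide, and then one invokes Theorem~\ref{T:square summable cpe weak contain} (equivalently Theorem~\ref{T:new theorem cpe}). You have simply spelled out in more detail why torsion $\widehat{X}$ forces $\Delta(G\actson X)\subseteq\Delta^{(2)}(G\actson X)$, via the fact that each $\ker\chi$ is open; this is exactly the content the paper leaves implicit.
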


We remark that actions of the above type were already considered in the nonamenable context in \cite{GabSew}, though in \cite{GabSew} they allow $X$ to be nonabelian. Our results show that the actions considered there have completely positive entropy in the presence, provided $X$ is abelian. In \cite{GabSew} it is implicitly shown that $G\actson (X,m_{X})$ is strongly sofic, and Corollary \ref{C:easy stuff intro} implies strong soficity when $X$ is abelian with a different proof. By the main theorem of \cite{Me12}, Corollary \ref{C:easy stuff intro}  also recovers \cite[Theorem 8.2]{GabSew} when $H$ is abelian.

A crucial step in the proof Theorem \ref{T:new theorem cpe} is the ability to reduce to the case that $X$ is generated by the image of a  single $\ell^{2}$-vector. When trying to show that $G\actson (X,m_{X})$ is weakly contained in a Bernoulli shift, this reduction follows trivially
from Theorem \ref{T:weak containment intro}. However, for the question of complete positive entropy, we need the following new result.

\begin{thm}\label{T:join of cpe is cpe intro}
Let $G$ be a countable, discrete, sofic group with sofic approximation $(\sigma_{k})_{k}.$ Let $G\actson X$ be an algebraic action. Suppose that $(X_{j})_{j\in J}$ are closed, $G$-invariant subgroups of $X,$ which topologically generate $X.$ If for every $j\in J$ we have that $G\actson (X_{j},m_{X_{j}})$ is strongly sofic with respect to $(\sigma_{k})_{k}$ and has completely positive entropy in the presence, then $G\actson (X,m_{X})$ is strongly sofic and has completely positive entropy in the presence.
\end{thm}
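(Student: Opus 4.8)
The plan is to reduce the general case to the case of a countable join of two subgroups, and then leverage the structural characterization of the Pinsker factor of an algebraic action from \cite{Me13}. First I would reduce $J$ to being countable: since $X$ is a compact metrizable group, it is second countable, and the closed subgroup topologically generated by $\bigcup_{j\in J}X_{j}$ is already generated by countably many of the $X_{j}$ (pick a countable dense subset of the union and enlarge). So we may assume $J=\N$. Next, writing $Y_{n}$ for the closed $G$-invariant subgroup generated by $X_{1},\dots,X_{n}$, we have $Y_{n}\nearrow X$ in the sense that $\bigcup_{n}Y_{n}$ is dense in $X$, and each $Y_{n}$ is a join of finitely many of the $X_{j}$. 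An easy induction using a two-step join plus a limiting argument (strong soficity and complete positive entropy in the presence are preserved under inverse limits of algebraic actions along the sofic approximation $(\sigma_{k})_{k}$ — the former because the sequence of sofic models stabilizes on each finite piece, the latter because the Pinsker factor of an inverse limit is the inverse limit of the Pinsker factors) would then finish the proof. Thus the heart of the matter is: if $X_{1},X_{2}$ are closed $G$-invariant subgroups topologically generating $X$, each strongly sofic and of completely positive entropy in the presence with respect to $(\sigma_{k})_{k}$, then so is $X=X_{1}\vee X_{2}$.

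For this two-step case, the first point is strong soficity of $X$. The multiplication map $X_{1}\times X_{2}\to X$ is a continuous, $G$-equivariant, surjective group homomorphism, hence (by uniqueness of Haar measure) pushes $m_{X_{1}}\otimes m_{X_{2}}$ to $m_{X}$; so $G\actson(X,m_X)$ is a factor of the product action $G\actson(X_1\times X_2, m_{X_1}\otimes m_{X_2})$. The product of two strongly sofic actions with respect to a fixed sofic approximation is strongly sofic (this follows by concatenating the respective sofic models), and strong soficity passes to factors, so $G\actson(X,m_X)$ is strongly sofic. The second and harder point is complete positive entropy in the presence. Here I would use the description (from \cite{Me13}, which applies precisely because $X$ is strongly sofic) of the Pinsker factor of an algebraic action as $G\actson X/P$ for a canonical closed $G$-invariant subgroup $P=P(G\actson X)$, and the fact that complete positive entropy in the presence is equivalent to $P$ being trivial. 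The key structural input I would want is monotonicity/compatibility of the Pinsker subgroup with the lattice of closed $G$-invariant subgroups: for a closed $G$-invariant subgroup $Z\le X$, the Pinsker subgroup $P(G\actson Z)$ equals $Z\cap P(G\actson X)$ — this is the natural "Pinsker factor of a subgroup" statement, and it is exactly the kind of lattice-theoretic compatibility established in \cite{HayesLW*}. Granting this, $X_i$ having completely positive entropy in the presence means $X_i\cap P(G\actson X)=P(G\actson X_i)=\{0\}$ for $i=1,2$. But $P(G\actson X)$ is a closed $G$-invariant subgroup of $X$ meeting both $X_1$ and $X_2$ trivially; since $X_1,X_2$ topologically generate $X$, I want to conclude $P(G\actson X)=\{0\}$.

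The step I expect to be the main obstacle is precisely this last deduction: that a closed $G$-invariant subgroup $P$ of $X$ with $P\cap X_1=P\cap X_2=\{0\}$ and $X_1\vee X_2 = X$ must be trivial. This is false for abstract subgroups (two lines in a plane and a third line meeting each trivially), so it genuinely uses that $P$ is the Pinsker subgroup, not an arbitrary subgroup — what one really needs is a statement like: the Pinsker subgroup of a join is the join of the Pinsker subgroups, i.e. $P(G\actson X_1\vee X_2) = P(G\actson X_1)\vee P(G\actson X_2)$. Equivalently, in the dual picture, the outer Pinsker algebra behaves well under the relevant operation on countable subgroups of $\widehat X$. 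I would establish this by dualizing: $\widehat X$ is a countable $\Z(G)$-module, $X_i$ corresponds to a quotient module $\widehat X \twoheadrightarrow \widehat{X_i}$, equivalently to a submodule, $X=X_1\vee X_2$ corresponds to $\widehat X \hookrightarrow \widehat{X_1}\oplus \widehat{X_2}$, and the Pinsker subgroup corresponds (via \cite{Me13}) to a canonical submodule of $\widehat X$ — something like the torsion submodule relative to $L(G)$, or the submodule of elements whose $\Z(G)$-span is "small" in the appropriate sense. The claim then becomes that this canonical submodule of a submodule of $\widehat{X_1}\oplus\widehat{X_2}$ is controlled by the canonical submodules of $\widehat{X_1}$ and $\widehat{X_2}$; in the torsion-module formulation this is just that a torsion element of a submodule of a direct sum is torsion in each coordinate, which is immediate. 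Carrying this dualization out carefully, and checking it matches the "in the presence" version of complete positive entropy (which is slightly weaker than genuine c.p.e. and is the form that is actually stable under the operations above), is where the real work lies; everything else is soft. If the clean Pinsker-subgroup-of-a-subgroup statement is not available in the cited form, the fallback is to argue directly with sofic entropy: show that any nontrivial factor $G\actson X/P$ of positive entropy would, after intersecting the corresponding subgroup of $\widehat X$ with $\widehat{X_1}$ or $\widehat{X_2}$, produce a nontrivial positive-entropy factor of one of the $X_i$, contradicting c.p.e. in the presence for that $X_i$ — this uses the Abramov–Rokhlin type addition formula for sofic entropy of algebraic actions together with strong soficity.
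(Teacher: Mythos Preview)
Your argument has a basic orientation error that undermines the c.p.e.\ part. If the outer Pinsker factor is $G\actson X/P$, then completely positive entropy in the presence means the Pinsker \emph{factor} is trivial, i.e.\ $P=X$, not $P=\{1\}$. With the correct convention, your monotonicity statement would read $P(G\actson X_{i})=X_{i}$, and you would want to conclude $P(G\actson X)=X$; the ``two lines in a plane'' picture is no longer the obstruction, but your proposed route through ``Pinsker of a join equals join of Pinskers'' and a torsion-module dualization is neither proved nor needed.

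The paper's proof (Corollary \ref{C:generating  by cpe subgroups}) is far shorter and avoids all of your reductions. There is no need to pass to countable $J$ or induct on finite joins. Strong soficity of $G\actson(X,m_{X})$ is immediate from the max-min result (Corollary \ref{C:strongly sofic}), since the strongly sofic measures form a convolution- and weak$^{*}$-closed class and therefore there is a largest strongly sofic subgroup, which must contain every $X_{j}$ and hence equals $X$. Your product/multiplication argument, incidentally, only gives a group homomorphism $X_{1}\times X_{2}\to X$ when $X$ is abelian, so it does not cover the general statement.

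For c.p.e., the key idea you are missing is to \emph{project down} to the Pinsker factor rather than intersect with a Pinsker subgroup. Once $G\actson(X,m_{X})$ is strongly sofic, \cite{Me13} gives the outer Pinsker factor as $X/Y$ with quotient map $q_{Y}$. Set $K_{j}=q_{Y}(X_{j})$. Using strong soficity of $G\actson(X_{j},m_{X_{j}})$ together with \cite[Theorem 1.1]{Me12} (measure entropy in the presence equals topological entropy in the presence for strongly sofic actions), one gets the chain
\[
h_{(\sigma_{k})_{k}}\bigl(G\actson(K_{j},m_{K_{j}}):(X_{j},m_{X_{j}})\bigr)
= h_{(\sigma_{k})_{k},\topo}(G\actson K_{j}:X_{j})
\leq h_{(\sigma_{k})_{k},\topo}(G\actson X/Y:X)
= h_{(\sigma_{k})_{k}}\bigl(G\actson(X/Y,m_{X/Y}):(X,m_{X})\bigr)\leq 0,
\]
the last inequality being the defining property of the outer Pinsker factor. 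Since $G\actson(X_{j},m_{X_{j}})$ has c.p.e.\ in the presence, $K_{j}=\{1\}$ for every $j$. But $q_{Y}$ is a continuous homomorphism and the $X_{j}$ topologically generate $X$, so the $K_{j}$ topologically generate $X/Y$; hence $X/Y=\{1\}$, i.e.\ $Y=X$, which is exactly c.p.e.\ in the presence for $G\actson(X,m_{X})$. This single estimate replaces your entire Pinsker-lattice discussion, and it is where the real content lies: it is the interplay between strong soficity and the topological/measure entropy comparison of \cite{Me12} that makes the argument go through uniformly over all $j\in J$ at once.
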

By ``topologically generate $X$" we mean that the smallest closed subgroup of $X$ containing all the $X_{j}$ is $X$ itself.

We close the introduction by discussing the organization of the paper. In Section \ref{S:weak containment} we prove Theorem \ref{T:main theorem max min}, as well as a general version for weak containment with respect to other classes of actions. In Section \ref{S:extend convolve} we define, in a natural way, an equivariant map $\Theta_{\xi}\colon (\R^{m})^{G}\to (\T^{G})^{\oplus n}$ associated to any $\xi\in M_{m,n}(\ell^{2}(G,\R))$ and any probability measure on $\R^{m}$ which has a finite second moment and is mean zero. This map extends convolution in the case that $\xi$ is finitely supported. In Section \ref{S:appplications}, we apply this measurable extension of convolution, along with our main theorem (Theorem \ref{T:main theorem max min}) to get many new results on weak containment, and on complete positive entropy. These include Theorems \ref{T:weak containment intro},\ref{T:new theorem cpe}, \ref{T:join of cpe is cpe intro}. The study of weak containment and weak equivalence is closely related to the study of random stabilizers, i.e. the study of invariant random subgroups, for this reason in Appendix \ref{S:IRS} we give a classification of the types of invariant random subgroups that can arise from algebraic actions.

\textbf{Acknowledgments.}
I thank Robin Tucker-Drob, Doug Lind, Anush Tserunyan, and Mehrdad Kalantar for interesting discussions related to this topic. I thank Robin Tucker-Drob for pointing me to the correct reference for invariant subgroups of algebraic actions. I thank Doug Lind for pointing out some specific interesting corollaries of the complete positive entropy results. I thank Peter Burton and Lewis Bowen for comments on an earlier draft of this article. Part of this work was done while I was visiting the State University of New York at Buffalo. I thank the State University of New York at Buffalo for their hospitality and providing a productive work environment. I thank the anonymous referee for their numerous comments, which greatly improved the paper.

\subsection{Notational Conventions}

In order to work in the proper generality, we will need to adopt some notation for product spaces. If $m\in \N,$ and $A$ is a set, then $A^{m}$ will be regarded as the set of all functions $\{1,\cdots,m\}\to A.$ We also use $A^{\infty}$ for $A^{\N}.$ If $J$ is a set, we let $c_{c}(J,\C)$ be all finitely supported functions $J\to \C,$ with similar notation for $c_{c}(J,\R),c_{c}(J,\Z)$ etc.

If $(A,\Sigma)$ is a standard Borel space, we let $\Prob(A)$ denote the space of completed, Borel probability measures on $A.$ We will often drop $\Sigma$ from the notation if it is clear from context or not necessary. If $A$ is a Polish space, then we equip $A$ with the obvious Borel structure. A \emph{Lebesgue space} will be a complete probability space $(X,\mu)$ which is isomorphic modulo null sets to a Polish space equipped with a completed Borel probability measure.

If $X$ is a Hausdorff topological group, we say that $S\subseteq X$ \emph{topologically generates $X$} if the smallest closed subgroup of $X$ containing $S$ is $X$ itself. If $(Y_{j})_{j\in J}$ are closed subgroups of $X,$ we use $\bigvee_{j\in J}Y_{j}$ for the smallest closed subgroup of $X$ containing all the $Y_{j}.$ If $X$ is  a Polish group,  we let $\Sub(X)$ be the space of closed subgroups of $X.$ We equip $\Sub(X)$ with the Chabauty topology. We use the notation $Y\leq X$ to mean that $Y$ is a closed subgroup of $X.$ Note that if $\mu,\nu\in \Prob(X),$ then there is a unique $\mu*\nu\in \Prob(X)$ so that
\[(\mu*\nu)(E)=\mu\otimes \nu(\{(x,y):xy\in E\})\]
for all Borel $E\subseteq X.$ Given $\mu\in \Prob(X),$ we define $\mu^{*}\in \Prob(X)$ by $\mu^{*}(E)=\mu(\{x:x^{-1}\in E\}).$ By a \emph{representation} of $X,$ we shall always mean a continuous homomorphism $\pi\colon X\to \mathcal{U}(\mathcal{H})$ where $\mathcal{H}$ is a Hilbert space, and $\mathcal{U}(\mathcal{H})$ is the group of unitaries on $\mathcal{H}.$ We shall say $\pi$ is finite-dimensional if $\mathcal{H}$ is. If $\pi\colon X\to \mathcal{U}(\mathcal{H})$ is a representation of $X$ and $\mu\in \Prob(X),$ we let $\pi(\mu)$ be the unique bounded operator on $\mathcal{H}$ so that
\[\ip{\pi(\mu)\xi,\eta}=\int_{X}\ip{\pi(x)\xi,\eta}\,d\mu(x)\mbox{ for all $\xi,\eta\in\mathcal{H}.$}\]
It is straightforward to see that $\pi(\mu*\nu)=\pi(\mu)\pi(\nu)$ and $\pi(\mu^{*})=\pi(\mu)^{*}$ for all $\mu,\nu\in \Prob(X).$

\section{The main result on weak containment}\label{S:weak containment}

In this section, we prove Theorem \ref{T:main theorem max min}. We will also prove a more general result for weak containment with respect to a fairly arbitrary class of actions. The first step is the following formulae which tell us how to recover the Haar measure on $Y_{1}\vee Y_{2},$ as well as the Haar measure on the group topologically generated by the support of a given probability measure. We use the following notation: given orthogonal projections $P,Q$ on a Hilbert space $\mathcal{H},$ we let $P\wedge Q$ be the orthogonal projection onto $P(\mathcal{H})\cap Q(\mathcal{H}).$

\begin{lem}\label{L:PW stuff}
Let $X$ be a compact group.
\begin{enumerate}[(i)]
\item For $Y_{1},Y_{2}\in \Sub(X),$ we have that $m_{Y_{1}\vee Y_{2}}=\lim_{n\to\infty}(m_{Y_{1}}*m_{Y_{2}}*m_{Y_{1}})^{*n}.$  \label{I:meet of subgroups}
\item For $\mu\in \Prob(X),$ we have that $m_{\overline{\ip{\supp(\mu^{*}*\mu)}}}=\lim_{n\to\infty}(\mu^{*}*\mu)^{*n}.$ \label{I:recovering the support}
\end{enumerate}

\end{lem}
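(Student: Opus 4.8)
The plan is to reduce both statements to operator theory via Peter--Weyl. I would first record three standard preliminaries. (a) A probability measure $\mu$ on the compact group $X$ is determined by the operators $\pi(\mu)$ as $\pi$ ranges over the finite-dimensional unitary representations of $X$, and a sequence $(\mu_n)$ in $\Prob(X)$ converges to $\mu$ (i.e.\ $\int f\,d\mu_n\to\int f\,d\mu$ for all $f\in C(X)$) as soon as $\pi(\mu_n)\to\pi(\mu)$ in the weak operator topology for every such $\pi$ --- this follows from the density of matrix coefficients in $C(X)$ and the bound $\|\mu_n\|=1$. (b) For $Y\leq X$ and any representation $\pi\colon X\to\mathcal{U}(\mathcal{H})$, the operator $\pi(m_Y)$ is the orthogonal projection onto the space $\mathcal{H}^{\pi(Y)}$ of $\pi(Y)$-fixed vectors, because $m_Y=m_Y^{*}=m_Y*m_Y$; in particular $\pi(m_{Y_1\vee Y_2})=\pi(m_{Y_1})\wedge\pi(m_{Y_2})$, since a vector fixed by $Y_1$ and by $Y_2$ is fixed by the closed subgroup they generate. (c) If $T$ is a positive self-adjoint contraction on a Hilbert space, then $T^{n}$ converges in the strong operator topology to the orthogonal projection onto $\ker(I-T)$, by the spectral theorem and dominated convergence applied to $\lambda^{n}\to\mathbf{1}_{\{1\}}$ on $[0,1]$. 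Given (a)--(c), in each part it suffices to fix $\pi$, observe that $\pi$ applied to the $n$-th convolution power is the $n$-th power of an explicit positive self-adjoint contraction $T_\pi$, and identify $\ker(I-T_\pi)$.

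For part (\ref{I:meet of subgroups}): writing $P_i=\pi(m_{Y_i})$, multiplicativity of $\pi$ gives $\pi\big((m_{Y_1}*m_{Y_2}*m_{Y_1})^{*n}\big)=(P_1P_2P_1)^{n}$, and $T_\pi:=P_1P_2P_1$ is a positive self-adjoint contraction. The routine point is that $\ker(I-T_\pi)=P_1(\mathcal{H})\cap P_2(\mathcal{H})$: one inclusion is immediate, and for the other, $T_\pi\xi=\xi$ forces $\|\xi\|^{2}=\|P_2P_1\xi\|^{2}\leq\|P_1\xi\|^{2}\leq\|\xi\|^{2}$, so $P_1\xi=\xi$ and then $P_2\xi=\xi$. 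Hence $T_\pi^{n}\to P_1\wedge P_2=\pi(m_{Y_1\vee Y_2})$, and (a) finishes it.

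For part (\ref{I:recovering the support}): with $S=\supp\mu$, first note that $\supp(\mu^{*}*\mu)=\overline{S^{-1}S}$ (the support of a convolution of Radon probability measures is the closure of the product of the supports), so $H:=\overline{\ip{\supp(\mu^{*}*\mu)}}=\overline{\ip{S^{-1}S}}$. Putting $U=\pi(\mu)$, we get $\pi\big((\mu^{*}*\mu)^{*n}\big)=(U^{*}U)^{n}$, and $T_\pi:=U^{*}U$ is a positive self-adjoint contraction with $\ker(I-T_\pi)=\{\xi:\|U\xi\|=\|\xi\|\}$ (using $I-U^{*}U\geq 0$). The heart of the argument is the identification $\{\xi:\|U\xi\|=\|\xi\|\}=\mathcal{H}^{\pi(H)}$. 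For $\supseteq$: if $\xi$ is $\pi(H)$-fixed, fix $x_0\in S$; then $x_0^{-1}x\in H$ for all $x\in S$, so $\pi(x)\xi=\pi(x_0)\xi$ for $x\in S$, whence $U\xi=\int\pi(x)\xi\,d\mu(x)=\pi(x_0)\xi$ and $\|U\xi\|=\|\xi\|$. For $\subseteq$: if $\|U\xi\|=\|\xi\|$, then $\|\pi(x)\xi\|$ is constant in $x$ and $\int\|\pi(x)\xi-U\xi\|^{2}\,d\mu(x)=\|\xi\|^{2}-\|U\xi\|^{2}=0$, so $\pi(x)\xi=U\xi$ for $\mu$-a.e.\ $x$, hence for all $x\in S$ by continuity; therefore $\pi(x^{-1}y)\xi=\xi$ for all $x,y\in S$, and since the $\pi$-stabilizer of $\xi$ is a closed subgroup it contains $\overline{\ip{S^{-1}S}}=H$. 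Thus $T_\pi^{n}\to\pi(m_H)$, and (a) gives the claim.

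The main obstacle I anticipate is the inclusion $\subseteq$ in part (\ref{I:recovering the support}): it is the only step that must convert the purely metric/operator statement $\|U\xi\|=\|\xi\|$ back into pointwise information about the representation along $\supp\mu$, and it requires both the equality-case (strict convexity) argument in Hilbert space and the bookkeeping identifying $\overline{\ip{S^{-1}S}}$ with the group appearing in the statement. Everything else --- the Peter--Weyl reduction, the alternating-projections limit, and part (\ref{I:meet of subgroups}) --- is standard once the framework is in place.
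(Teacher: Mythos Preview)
Your proof is correct and follows the same Peter--Weyl reduction plus spectral-theorem-for-positive-contractions strategy as the paper. The only cosmetic difference is in part~(\ref{I:recovering the support}): the paper applies the equality case of Cauchy--Schwarz directly to $\nu=\mu^{*}*\mu$ to show that any $\pi(\nu)$-fixed vector $\xi$ satisfies $\pi(x)\xi=\xi$ for all $x\in\supp\nu$, whereas you obtain the same conclusion via the variance identity $\int\|\pi(x)\xi-U\xi\|^{2}\,d\mu=\|\xi\|^{2}-\|U\xi\|^{2}$ on $\mu$ itself and then pass to $S^{-1}S$---the two arguments encode the same strict-convexity principle.
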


\begin{proof}

(\ref{I:meet of subgroups}): By the Peter-Weyl theorem, it suffices to show that $\pi(m_{Y_{1}\vee Y_{2}})=\lim_{n\to\infty}(\pi(m_{Y_{1}})\pi(m_{Y_{2}})\pi(m_{Y_{1}}))^{n}$ for every finite-dimensional representation $\pi$ of $X.$ So fix a finite-dimensional representation $\pi$ of $X,$ and set $P_{j}=\pi(m_{Y_{j}})$ for each $j=1,2.$ Observe that each $P_{j}$ is an orthogonal projection. Since $\|P_{1}P_{2}P_{1}\|\leq 1,$ and $P_{1}P_{2}P_{1}\geq 0,$ the spectral theorem shows that
$(P_{1}P_{2}P_{1})^{n}$ converges to the projection onto the fixed points of $P_{1}P_{2}P_{1}.$ We claim that the projection onto the fixed points of $P_{1}P_{2}P_{1}$ is $P_{1}\wedge P_{2}.$ Clearly $P_{1}(\mathcal{H})\cap P_{2}(\mathcal{H})$ is  contained in the set of fixed points of $P_{1}P_{2}P_{1}.$ Conversely, suppose that $\xi\in \mathcal{H}$ is fixed by $P_{1}P_{2}P_{1}.$ Then $\|\xi\|=\|P_{1}P_{2}P_{1}\xi\|\leq \|P_{1}\xi\|\leq \|\xi\|,$ so $\|P_{1}\xi\|=\|\xi\|,$ and this implies that $P_{1}\xi=\xi.$ Similarly, we have that $P_{2}\xi=\xi,$ so the fixed points of $P_{1}P_{2}P_{1}$ are $P_{1}(\mathcal{H})\cap P_{2}(\mathcal{H}).$ It simply remains to show that $P_{1}\wedge P_{2}=\pi(m_{Y_{1}\vee Y_{2}}),$ and this is straightforward from the fact that $P_{j}$ is the orthogonal projection onto the $Y_{j}$-invariant vectors for $j=1,2$ and that $\pi(m_{Y_{1}\vee Y_{2}})$ is the projection onto the $Y_{1}\vee Y_{2}$-invariant vectors.

(\ref{I:recovering the support}): Fix a finite-dimensional representation $\pi\colon X\to \mathcal{U}(\mathcal{H}).$ As in (\ref{I:meet of subgroups}), it suffices to show that
\[\pi(m_{\overline{\ip{\supp(\mu^{*}*\mu)}}})=\lim_{n\to\infty}[\pi(\mu)^{*}\pi(\mu)]^{n}.\]
Since $1\geq \pi(\mu)^{*}\pi(\mu)\geq 0,$ it follows by the spectral theorem that $\lim_{n\to\infty}[\pi(\mu)^{*}\pi(\mu)]^{n}$ converges to the projection onto the fixed points of $\pi(\mu)^{*}\pi(\mu)=\pi(\mu^{*}*\mu).$ Set $\nu=\mu^{*}*\mu.$ It now simply suffices to show that the fixed points of $\pi(\nu)$ are $\pi(m_{\overline{\ip{\supp(\nu)}}})(\mathcal{H}).$ Let $P$ be the projection onto the fixed points of $\pi(\nu).$

Set $Y=\overline{\ip{\supp(\nu)}},$ then $\pi(m_{Y})$ is the projection onto the $Y$-fixed points in $\mathcal{H},$ and so it is clear that $\pi(m_{Y})\leq P.$ Conversely, suppose that $\xi\in \mathcal{H}$ and is fixed by $\pi(\nu).$ Then:
\[\|\xi\|^{2}=\rea(\ip{\pi(\nu)\xi,\xi})=\int_{X}\rea(\ip{\pi(x)\xi,\xi})\,d\nu(x).\]
By the Cauchy-Schwartz inequality we have that $\rea(\ip{\pi(x)\xi,\xi})\leq \|\xi\|^{2},$ and since $x\mapsto \rea(\ip{\pi(x)\xi,\xi})$ is continuous the above equation is only possible if $\rea(\ip{\pi(x)\xi,\xi})=\|\xi\|^{2}$ for every $x\in\supp(\nu).$ Since $\|\pi(x)\xi-\xi\|^{2}=2\|\xi\|^{2}-2\rea(\ip{\pi(x)\xi,\xi})$ for all $x\in X,$ we have that $\pi(x)\xi=\xi$ for all $x\in \supp(\nu).$ But $\{x\in X:\pi(x)\xi=\xi\}$ is a closed subgroup of $X,$ so we must have that $\pi(x)\xi=\xi$ for all $x\in Y.$ Thus $P\leq \pi(m_{Y}),$ so $P=\pi(m_{Y}).$

\end{proof}

Recall that $\Sub(X)$ is a partially ordered set under the order $Y\leq X$ if $Y\subseteq X.$ If $(I,\preceq)$ is any partially ordered set, then an element $i\in I$ is said to be the \emph{largest} element of $I$ if $j\preceq i$ for every $j\in I.$ It may not be the case that largest elements exist. Note that this is stronger than being a \emph{maximal} element of $I,$ which is an element $i\in I$ so that if $j\in I$ and $j\succeq i,$ then $i=j.$ Similarly, one defines a \emph{smallest} element, as well as a \emph{minimal} element of a partially ordered set.
\begin{lem}\label{L:maximality argument}
Let $X$ be a compact group, and let $\mathcal{P}\subseteq \Prob(X)$ be closed under convolutions, the $*$-operation, and in the weak$^{*}$ topology. Let $\mathcal{S}=\{Y\in \Sub(X):m_{Y}\in \mathcal{P}\}.$
\begin{enumerate}[(i)]
\item For every $Y_{1},Y_{2}\in \mathcal{S},$ we have that $Y_{1}\vee Y_{2}\in \mathcal{S}.$  \label{I:group generated by support}
\item There is largest element $Y\in \mathcal{S}$ with respect to the containment order. Further,   $\supp(\nu^{*}*\nu)\subseteq Y$ for every $\nu\in \mathcal{P}.$ \label{I:uniqueness semigroup containment}
\item If $Y$ is as in (\ref{I:uniqueness semigroup containment}), then for every $\nu\in \mathcal{P}$ there is a $c\in X/Y$ so that $\supp(\nu)\subseteq c.$ \label{I:coset support}
\end{enumerate}
\end{lem}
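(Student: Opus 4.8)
The plan is to build the largest element of $\mathcal{S}$ directly as a $\vee$-limit, using Lemma \ref{L:PW stuff} to verify membership in $\mathcal{S}$ at each stage. For part (\ref{I:group generated by support}), given $Y_1, Y_2 \in \mathcal{S}$, I would apply Lemma \ref{L:PW stuff}(\ref{I:meet of subgroups}) to write $m_{Y_1 \vee Y_2} = \lim_{n\to\infty}(m_{Y_1}*m_{Y_2}*m_{Y_1})^{*n}$. Since $\mathcal{P}$ contains $m_{Y_1}, m_{Y_2}$ and is closed under convolution, each $(m_{Y_1}*m_{Y_2}*m_{Y_1})^{*n}$ lies in $\mathcal{P}$; since $\mathcal{P}$ is weak$^*$ closed, the limit $m_{Y_1 \vee Y_2}$ lies in $\mathcal{P}$, so $Y_1 \vee Y_2 \in \mathcal{S}$. (Note $Y_1\vee Y_2 = \overline{\langle Y_1 \cup Y_2\rangle}$ is automatically a closed subgroup, so it genuinely belongs to $\Sub(X)$.)

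For part (\ref{I:uniqueness semigroup containment}), the idea is to take a supremum over all of $\mathcal{S}$ at once. Let $Y = \bigvee_{Z \in \mathcal{S}} Z$. I would want to show $m_Y \in \mathcal{P}$. The clean way: the family $\{m_Z : Z \in \mathcal{S}\}$ need not be finite, so a single application of Lemma \ref{L:PW stuff} does not suffice; instead I would argue via the directed system of finite joins. By part (\ref{I:group generated by support}) and induction, every finite join $Z_1 \vee \cdots \vee Z_k$ with $Z_i \in \mathcal{S}$ lies in $\mathcal{S}$; these finite joins form a directed set $\mathcal{D}$ under inclusion, and $Y$ is the closure of their union, equivalently $m_Y$ is the weak$^*$ limit of the net $(m_W)_{W \in \mathcal{D}}$ — this last point I would justify by the Peter–Weyl argument as in Lemma \ref{L:PW stuff}: for each finite-dimensional representation $\pi$, $\pi(m_W)$ is the projection onto the $W$-invariant vectors, and as $W$ increases through $\mathcal{D}$ these projections decrease to the projection onto the $\big(\overline{\bigcup_{W} W}\big)$-invariant vectors $= \pi(m_Y)$, by finite-dimensionality the decreasing sequence of projections stabilizes, so $\pi(m_W) \to \pi(m_Y)$. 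Since $\mathcal{P}$ is weak$^*$ closed and each $m_W \in \mathcal{P}$, we get $m_Y \in \mathcal{P}$, hence $Y \in \mathcal{S}$; and $Y$ clearly contains every element of $\mathcal{S}$, so it is the largest. For the ``Further'' clause: given $\nu \in \mathcal{P}$, set $\mu = \nu$ in Lemma \ref{L:PW stuff}(\ref{I:recovering the support}), giving $m_{\overline{\langle \supp(\nu^* * \nu)\rangle}} = \lim_n (\nu^* * \nu)^{*n} \in \mathcal{P}$ (using closure under $*$, convolution, and weak$^*$ limits). Thus $\overline{\langle \supp(\nu^**\nu)\rangle} \in \mathcal{S}$, so it is contained in $Y$, and in particular $\supp(\nu^* * \nu) \subseteq Y$.

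For part (\ref{I:coset support}), fix $\nu \in \mathcal{P}$. I would show $\supp(\nu)$ is contained in a single left coset of $Y$ by showing $x^{-1}y \in Y$ for all $x, y \in \supp(\nu)$. Indeed, $\supp(\nu^* * \nu) \supseteq \supp(\nu^*)\cdot\supp(\nu)$ — more carefully, if $x, y \in \supp(\nu)$ then $x^{-1} \in \supp(\nu^*)$ and every neighborhood of $x^{-1}y$ has positive $\nu^* * \nu$-measure, so $x^{-1}y \in \supp(\nu^* * \nu) \subseteq Y$ by part (\ref{I:uniqueness semigroup containment}). Hence all elements of $\supp(\nu)$ are congruent mod $Y$ (on the left), i.e. there is $c \in X/Y$ with $\supp(\nu) \subseteq c$.

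The main obstacle I anticipate is the passage to the possibly-uncountable supremum in part (\ref{I:uniqueness semigroup containment}): Lemma \ref{L:PW stuff} as stated handles two subgroups (or one measure), so I must reduce the global supremum to a limit of finite joins and justify the convergence $m_W \to m_Y$ along the directed set — which is exactly where finite-dimensionality of the representations in the Peter–Weyl decomposition does the work, since a decreasing net of projections on a finite-dimensional space is eventually constant. Everything else is routine bookkeeping with supports and weak$^*$ limits.
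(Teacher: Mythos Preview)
Your proof is correct. Part (\ref{I:group generated by support}) and the ``Further'' clause of (\ref{I:uniqueness semigroup containment}) match the paper exactly. The differences are in how the largest element is produced in (\ref{I:uniqueness semigroup containment}) and how (\ref{I:coset support}) is argued.

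For (\ref{I:uniqueness semigroup containment}), the paper observes that $\mathcal{S}$ is closed in the Chabauty topology (because $Y\mapsto m_Y$ is weak$^*$ continuous and $\mathcal{P}$ is weak$^*$ closed) and directed by (\ref{I:group generated by support}), hence has a maximum in the compact space $\Sub(X)$. You instead work directly: form the directed system of finite joins and use Peter--Weyl on finite-dimensional representations to show $m_W\to m_Y$, exploiting that a decreasing net of projections on a finite-dimensional space stabilizes. Your route is more self-contained (it avoids invoking Chabauty continuity of Haar measure as a black box), while the paper's is terser once that continuity is granted.

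For (\ref{I:coset support}), the paper pushes $\nu\otimes\nu$ forward under $(x,y)\mapsto x^{-1}y$ and uses Fubini to find a single coset of full measure. You give the purely topological argument: for $x,y\in\supp(\nu)$ one has $x^{-1}y\in\supp(\nu^{*}*\nu)\subseteq Y$, so all of $\supp(\nu)$ lies in one left coset. Your argument is more direct and avoids the measure-theoretic detour; the paper's argument has the mild advantage of making explicit that the coset carries full $\nu$-measure (though this is immediate from yours as well, since $\supp(\nu)$ has full measure).
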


\begin{proof}

(\ref{I:group generated by support}):
This follows from Lemma \ref{L:PW stuff} (\ref{I:meet of subgroups}).

(\ref{I:uniqueness semigroup containment}):
% We first prove uniqueness. If $Y_{1},Y_{2}$ both satisfy (\ref{I:uniqueness semigroup containment}), then $Y_{1}=\supp(m_{Y_{1}})=\supp(m_{Y_{1}}^{*}*m_{Y_{1}})\subseteq Y_{2},$ and by symmetry we have $Y_{1}=Y_{2}.$
By (\ref{I:group generated by support}) and the fact that $\mathcal{S}$ is closed in the Chabauty topology, there is a largest element $Y\in \mathcal{S}.$ Suppose that $\nu\in \mathcal{P}.$ By Lemma \ref{L:PW stuff} (\ref{I:recovering the support}),
\[m_{\overline{\ip{\supp(\nu^{*}*\nu)}}}=\lim_{n\to\infty}(\nu^{*}*\nu)^{*n}\in \mathcal{P}.\]
Thus $\overline{\ip{\supp(\nu^{*}*\nu)}}\in \mathcal{S},$ so $\overline{\ip{\supp(\nu^{*}*\nu)}}\subseteq Y.$ Thus $Y$ is the desired element of $\mathcal{S}.$

(\ref{I:coset support}):
Since $\nu\in \mathcal{P},$ we have that $\supp(\nu^{*}*\nu)\subseteq Y$ by  (\ref{I:uniqueness semigroup containment}). Let $f\colon X\times X\to X$ be the map $f(x,y)=x^{-1}y,$ so $\nu^{*}*\nu=f_{*}(\nu\otimes \nu).$ Let $q\colon X\to X/Y$ be the quotient map. Then
\[\supp((q\circ f)_{*}(\nu\otimes \nu))=q(\supp(f_{*}(\nu\otimes \nu)))=q(\supp(\nu^{*}*\nu))=\{Y\}.\]
We thus have that $q\circ f$ is $\nu\otimes\nu$-almost surely equal to $Y.$ Thus for $\nu\otimes\nu$-almost every $(x_{1},x_{2})\in X\times X$ we have that $x_{1}Y=x_{2}Y.$ By Fubini's theorem, this implies that there is a $c\in X/Y$ so that $xY=c$ for $\nu$-almost every $x\in X.$ Thus $\supp(\nu)\subseteq c.$

\end{proof}

If $(X,\mu)$ is a probability space, then a \emph{finite observable} is, by definition, a measurable map $\alpha\colon X\to A$ where $A$ is a finite set equipped with the $\sigma$-algebra of all subsets of $A.$
Suppose that $G$ is a countable, discrete, group and $G\actson (X,\mu)$ is a probability measure-preserving action. Given a finite observable $\alpha\colon X\to A,$ and a finite $F\subseteq G,$ we let $\alpha^{F}\colon X\to A^{F}$ be given by $\alpha^{F}(x)(g)=\alpha(g^{-1}x)$ for $g\in F,x\in X.$

\begin{defn}
Let $G$ be a countable, discrete, group, and $\mathcal{C}$ a class of probability measure-preserving actions of $G.$ Given a probability measure-preserving action $G\actson (Y,\nu),$ we say that $G\actson (Y,\nu)$ is \emph{weakly contained in $\mathcal{C}$} if for every $\varepsilon>0,$ every finite $F\subseteq G,$ and every finite observable $\alpha\colon Y\to A,$ there is a probability measure-preserving action $G\actson (X,\mu)$ in $\mathcal{C}$ and a finite observable $\beta\colon X\to A$ so that
\[\|(\alpha^{F})_{*}(\nu)-(\beta^{F})_{*}(\mu)\|_{1}<\varepsilon.\]
In the above inequality we are identifying $\Prob(A^{F})$ with the subset of $\ell^{1}(A^{F})$ consisting of functions which are nonnegative and which have $\|\cdot\|_{1}$-norm $1.$
\end{defn}

Suppose in the preceding definition that $Y$ is a compact metrizable space, that the action is by homeomorphisms, and that the measure $\nu$ is the completion of a Borel probability measure. Then it is easy to see that $G\actson (Y,\nu)$ is weakly contained in $\mathcal{C}$ if and only if there is a sequence $G\actson (X_{n},\mu_{n})$ of actions in $\mathcal{C}$ and measurable maps $\psi_{n}\colon X_{n}\to Y$ so that:
\begin{itemize}
\item $\lim_{n\to\infty}(\psi_{n})_{*}(\mu_{n})=\nu$ \mbox{ in the weak$^{*}$ topology,}
\item for all $g\in G,$ $\mu_{n}(\{x\in X_{n}:(\psi_{n}(gx),g\psi_{n}(x))\in \mathcal{O}\})\to 1$ for every open neighborhood $\mathcal{O}$ of the diagonal in $Y\times Y.$
\end{itemize}

If $A$ is a set, and $G$ is a group, we always have the \emph{left shift action} $G\actson A^{G}$ given by
\[(ga)(h)=a(g^{-1}h)\mbox{ for $g,h\in G,a\in A^{G}$.}\]
We will occasionally also use the \emph{right shift action} $\rho$ given by
\[(\rho(g)a)(h)=a(hg)\mbox{ for $g,h\in G,a\in A^{G}.$}\]
The left shift action will be what we use more often, thus for $a\in A^{G},g\in G,$ the notation $ga$ will be reserved for the action of $g$ on $a$ under the left shift action. Also, when we write $G\actson A^{G}$ we will, unless otherwise  stated, mean the left shift action of $G$ on $A^{G}.$ If $A$ is a Borel space, and $\mu\in \Prob(A),$ the measure $\mu^{\otimes G}$ is invariant under the left shift action. The action $G\actson (A^{G},\mu^{\otimes G})$ will be called the \emph{Bernoulli shift action}.

\begin{defn}
Let $G$ be a countable, discrete, group and let $\mathcal{C}$ be a class of probability measure-preserving actions of $G.$ We say that $\mathcal{C}$ is \emph{weakly closed under products} if whenever $G\actson (X_{1},\mu_{1}),G\actson (X_{2},\mu_{2})$ are in $\mathcal{C},$ then $G\actson (X_{1}\times X_{2},\mu_{1}\otimes \mu_{2})$ is weakly contained in $\mathcal{C}.$

\end{defn}

Here are some examples of classes which are weakly closed under products:
\begin{enumerate}
\item the class of all \emph{sofic} actions (trivial from the fact that the product of two sofic actions is sofic),
\item the class of all Bernoulli actions,
\item   the one-element class consisting of a fixed nontrivial Bernoulli action, when $G$ is infinite (by the Ab\'{e}rt-Weiss result \cite{AbertWeiss}),
\item the class of all translation actions: i.e actions of the form $G\actson (X,m_{X})$ where $X$ is a compact Hausdorff group, and $G$ acts by $gx=\phi(g)x$ for some homomorphism $\phi\colon G\to X,$
\item the class of all \emph{finite} actions, i.e. actions of the form $G\actson (A,\mu)$ where $A$ is a finite set and $\mu$ is $G$-invariant.
\end{enumerate}

\begin{cor}\label{C:structure corollary}
Let $X$ be a compact, metrizable group and let $G$  be a countable, discrete group with $G\actson X$ by continuous automorphisms.  Fix a class $\mathcal{C}$ of probability measure-preserving actions which is weakly closed under products.

\begin{enumerate}
\item There is a largest closed, $G$-invariant subgroup $Y$ of $X$ so that $G\actons (Y,m_{Y})$ is weakly contained in the class $\mathcal{C}.$ \label{I:maximal property}
\item We may characterize $Y$ in  (\ref{I:maximal property}) as the smallest closed, $G$-invariant, subgroup of $X$ with the following property: every $G\actson (Z,\zeta)$ weakly  contained in $\mathcal{C}$ is $G\actson X/Y$-ergodic in the presence of $G\actson X.$
\label{I:minimal property}
\end{enumerate}
\end{cor}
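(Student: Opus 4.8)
The plan is to phrase everything in terms of the family of measures to which Lemma~\ref{L:maximality argument} applies. Set
\[\mathcal{P}=\{\mu\in\Prob(X):\mu\text{ is }G\text{-invariant and }G\actson(X,\mu)\text{ is weakly contained in }\mathcal{C}\}.\]
The strategy has three steps: first show $\mathcal{P}$ is closed under convolution, under $\mu\mapsto\mu^{*}$, and under weak$^{*}$ limits, so that Lemma~\ref{L:maximality argument} applies to it; second, identify the associated poset $\mathcal{S}=\{W\leq X:m_{W}\in\mathcal{P}\}$ with the set of closed, $G$-invariant subgroups $W$ such that $G\actson(W,m_{W})$ is weakly contained in $\mathcal{C}$; third, read off (\ref{I:maximal property}) and (\ref{I:minimal property}) from parts (\ref{I:uniqueness semigroup containment}) and (\ref{I:coset support}) of that lemma.

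The first step is where the work lies. Closure of $\mathcal{P}$ under $\mu\mapsto\mu^{*}$ is immediate: the inversion map $\iota\colon X\to X$ is a $G$-equivariant homeomorphism (since $G$ acts by automorphisms) with $\iota_{*}\mu=\mu^{*}$, so $G\actson(X,\mu^{*})$ is isomorphic to $G\actson(X,\mu)$. Closure under weak$^{*}$ limits is a routine diagonalization: if $\mu_{n}\to\mu$ and each $G\actson(X,\mu_{n})$ is weakly contained in $\mathcal{C}$, approximate a given finite observable by one whose fibers are $\mu$-continuity sets, and combine a weak$^{*}$-convergence estimate with the weak-containment estimate for a suitable $\mu_{n}$. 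Closure under convolution is the substantive point: the multiplication map $m\colon X\times X\to X$ is $G$-equivariant for the diagonal action and $m_{*}(\mu_{1}\otimes\mu_{2})=\mu_{1}*\mu_{2}$, so $G\actson(X,\mu_{1}*\mu_{2})$ is a factor of the product $G\actson(X\times X,\mu_{1}\otimes\mu_{2})$; since factors preserve weak containment and weak containment is transitive, it suffices to know that a product of two actions each weakly contained in $\mathcal{C}$ is itself weakly contained in $\mathcal{C}$, and this follows from the hypothesis that $\mathcal{C}$ is weakly closed under products, once one notes that a finite observable on a product space is, on any prescribed finite subset of $G$, approximable by an observable measurable with respect to a pair of finite observables on the two factors. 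This last point --- product-closure of the collection of actions weakly contained in $\mathcal{C}$ --- is the only non-routine step; everything else is bookkeeping.

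Granting the first step, the poset $\mathcal{S}$ is as claimed because the pushforward of $m_{W}$ under an automorphism $g$ is $m_{gW}$, so $m_{W}$ is $G$-invariant as a measure on $X$ exactly when $gW=W$ for all $g\in G$, while $G\actson(X,m_{W})$ is the same system as $G\actson(W,m_{W})$. Then Lemma~\ref{L:maximality argument}(\ref{I:uniqueness semigroup containment}) produces the largest element $Y$ of $\mathcal{S}$, giving (\ref{I:maximal property}). For (\ref{I:minimal property}): given $G\actson(Z,\zeta)$ weakly contained in $\mathcal{C}$ and a $G$-equivariant measurable $\Theta\colon Z\to X$, the measure $\Theta_{*}\zeta$ is $G$-invariant and $G\actson(X,\Theta_{*}\zeta)$ is a factor of $G\actson(Z,\zeta)$, so $\Theta_{*}\zeta\in\mathcal{P}$; by Lemma~\ref{L:maximality argument}(\ref{I:coset support}) its support lies in a single coset $c\in X/Y$, hence $q_{Y}\circ\Theta$ has pushforward a point mass and is almost surely constant --- i.e. $G\actson(Z,\zeta)$ is $G\actson X/Y$-ergodic in the presence of $G\actson X$. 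Conversely, if $Y'$ is a closed, $G$-invariant subgroup with this ergodicity property, apply it to $G\actson(Y,m_{Y})$ (legitimate since $Y\in\mathcal{S}$) with $\Theta\colon Y\hookrightarrow X$ the inclusion: then $q_{Y'}|_{Y}$ is $m_{Y}$-almost surely constant, hence --- being a continuous homomorphism out of $Y$, on which $m_{Y}$ has full support --- identically the trivial coset, so $Y\subseteq Y'$. Thus $Y$ is the smallest closed, $G$-invariant subgroup with the ergodicity property, which is (\ref{I:minimal property}).
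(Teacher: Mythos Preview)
Your proof is correct and follows essentially the same route as the paper: define $\mathcal{P}$ as the $G$-invariant probability measures on $X$ whose associated action is weakly contained in $\mathcal{C}$, verify the hypotheses of Lemma~\ref{L:maximality argument}, and read off both parts from items (\ref{I:uniqueness semigroup containment}) and (\ref{I:coset support}) of that lemma together with the inclusion $Y\hookrightarrow X$ for the converse direction of (\ref{I:minimal property}).

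One point worth noting: you are actually more careful than the paper on the convolution step. The paper simply asserts that $\mu_{1}*\mu_{2}\in\mathcal{P}$ because it is a factor of $G\actson(X\times X,\mu_{1}\otimes\mu_{2})$, implicitly taking for granted that this product action is weakly contained in $\mathcal{C}$. But the hypothesis ``weakly closed under products'' only guarantees this when both factors lie \emph{in} $\mathcal{C}$, not merely weakly contained in $\mathcal{C}$. Your observation that one must first check that the class of actions weakly contained in $\mathcal{C}$ is itself closed under products --- via the approximation of product-space observables by pairs of factor observables --- fills this small gap. Your argument for the converse in (\ref{I:minimal property}), using that $q_{Y'}|_{Y}$ is a continuous homomorphism with full-support domain, is a minor repackaging of the paper's coset argument ($Y\subseteq a\widetilde{Y}$ and $1\in Y$ force $a\widetilde{Y}=\widetilde{Y}$), and both are equally valid.
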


\begin{proof}

(\ref{I:maximal property}): Let $\mathcal{P}$ be the set of $\mu\in \Prob(X)$ so that $G\actson (X,\mu)$ is weakly contained in $\mathcal{C}.$ Then $\mathcal{P}$ is clearly weak$^{*}$ closed. It is also closed under convolution: if $\mu,\nu\in \mathcal{P},$ then $G\actson (X,\mu*\nu)$ is a factor of the product action $G\actson (X\times X,\mu\otimes \nu)$ with factor map $p\colon X\times X\to X$ given by $p(x,y)=xy.$ We also have that $\mathcal{P}$ is closed under the $*$ operation, since we have an isomorphism of probability measure-preserving actions $G\actson (X,\mu)\cong G\actson (X,\mu^{*})$ given by taking inverses.

Thus, if we let $\mathcal{S}=\{Y\in \Sub(X):m_{Y}\in \mathcal{P}\},$ then by Lemma \ref{L:maximality argument} (\ref{I:uniqueness semigroup containment}), we may find a largest element $Y\in \mathcal{S}.$ By construction $G\actson (Y,m_{Y})$ is weakly contained in $\mathcal{C}$, and is the largest closed subgroup of $X$ with this property.

(\ref{I:minimal property}): First, suppose that $G\actson (Z,\zeta)$ is weakly contained in $\mathcal{C}$ and that $\psi\colon Z\to X$ is Borel  and $G$-equivariant. Then $\mu=\psi_{*}(\zeta)\in \mathcal{P}$ and so by Lemma \ref{L:maximality argument} (\ref{I:coset support}), there must be an $a\in X$ so that $\supp(\mu)\subseteq aY.$ Thus $\psi(z)\in aY$ for almost every $z\in Z.$

Conversely, suppose that $\widetilde{Y}$ is another closed, $G$-invariant subgroup of $X$ so that every action weakly contained in $\mathcal{C}$ is $G\actson X/\widetilde{Y}$-ergodic in the presence of $G\actson X.$ Since $G\actson (Y,m_{Y})$ is weakly contained in $\mathcal{C},$ we may apply the assumptions on $\widetilde{Y}$ to the inclusion map $\iota\colon Y\to X$ to see that $m_{Y}(Y\cap a\widetilde{Y})=1$ for some $a\in X.$ But this clearly implies that $Y\cap a\widetilde{Y}$ is dense in $Y$ and thus, since $\widetilde{Y}$ is closed, that $Y\subseteq a\widetilde{Y}.$ Since $1\in Y,$ we must have that $1\in a\widetilde{Y}.$ Since cosets of $\widetilde{Y}$ are either equal or disjoint, we must have that $a\widetilde{Y}=\widetilde{Y}.$ Thus we have shown that $Y\subseteq \widetilde{Y}.$ So  $Y$ is the smallest closed, $G$-invariant subgroup of $X$ so that every element weakly contained in $\mathcal{C}$ is $G\actson X/Y$-ergodic in the presence of $G\actson X.$
%
%Since $Y$ is weakly contained in a Bernoulli shift, and all Bernoulli shifts are weakly equivalent, we may find a sequence $\psi_{n}\colon B^{G}\to Y$ of $G$-equivariant, Borel maps so that $(\psi_{n})_{*}(\beta^{G})\to m_{Y}$ weak$^{*}.$ For each $n\in \N$ we may find a $b_{n}\in N_{X}(Y)$ so that $\supp((\psi_{n})_{*}(\beta^{G}))\subseteq a_{n}\widetilde{Y}.$ Since $X$ is compact, we may assume, without loss of generality, that there is an $a\in X$ so that $a_{n}\to a.$ We then have that $Y=\supp(m_{Y})\subseteq a\widetilde{Y}.$ But since $1\in Y,$ we thus have that $1\in a\widetilde{Y}.$ So $a\in \widetilde{Y}$ and $Y\subseteq \widetilde{Y}.$

\end{proof}

Applying $\mathcal{C}$ to the class of Bernoulli shifts shows the following.

\begin{cor}\label{C:Bern weak contain}
Let $X$ be a compact, metrizable group and let $G$  be a countable, discrete group with $G\actson X$ by continuous automorphisms. The following are equivalent:
\begin{enumerate}[(i)]
\item the action $G\actson (X,m_{X})$ is weakly contained in a Bernoulli shift. \label{I:WC BErn}
\item For every proper $G$-invariant closed subgroup $Y$ of $X,$ there is an action weakly contained in a Bernoulli shift which is not $G\actson X/Y$-ergodic in the presence of $G\actson X.$

\label{I:ergodic in the presence}
\end{enumerate}
\end{cor}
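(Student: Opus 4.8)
The plan is to obtain Corollary~\ref{C:Bern weak contain} directly from Corollary~\ref{C:structure corollary}, applied with $\mathcal{C}$ the class of all Bernoulli shifts of $G$. First I would check the hypotheses of that corollary for this $\mathcal{C}$: it is weakly closed under products, because a product of two Bernoulli shifts is again a Bernoulli shift; and, for a probability measure-preserving action, being weakly contained in $\mathcal{C}$ is equivalent to being weakly contained in a single Bernoulli shift --- for instance the Bernoulli shift over the unit interval with Lebesgue measure, which $G$-equivariantly factors onto every Bernoulli shift with standard base, so that weak containment in $\mathcal{C}$ passes through to weak containment in it. Thus Corollary~\ref{C:structure corollary} produces a largest closed, $G$-invariant subgroup $Y_{0}\leq X$ with $G\actson(Y_{0},m_{Y_{0}})$ weakly contained in a Bernoulli shift, and $Y_{0}$ is at the same time the smallest closed, $G$-invariant subgroup $Y$ satisfying the property $(\star_{Y})$: \emph{every action weakly contained in a Bernoulli shift is $G\actson X/Y$-ergodic in the presence of $G\actson X$}.

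Next I would record two facts. (a) Condition (i) holds if and only if $Y_{0}=X$, since $Y_{0}$ is by construction the largest closed $G$-invariant subgroup $Y$ for which $m_{Y}$ defines an action weakly contained in a Bernoulli shift, so $G\actson(X,m_{X})$ has this property exactly when $X$ itself is such a $Y$. (b) Property $(\star_{Y})$ is monotone in $Y$: if $Y_{0}\leq Y$ are both closed and $G$-invariant, then the quotient $X\to X/Y$ factors $G$-equivariantly as $X\to X/Y_{0}\to X/Y$, so whenever the composite of a $G$-equivariant measurable $\Theta\colon Z\to X$ with $X\to X/Y_{0}$ is almost surely constant, so is its composite with $X\to X/Y$; hence $Y$ satisfies $(\star_{Y})$. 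Together with the minimality of $Y_{0}$, fact (b) shows that the closed $G$-invariant subgroups satisfying $(\star_{Y})$ are precisely those containing $Y_{0}$.

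Finally I would read off (ii): it asserts that \emph{no} proper closed, $G$-invariant subgroup $Y$ satisfies $(\star_{Y})$, which by the preceding sentence means that $Y_{0}$ is not a proper subgroup, i.e.\ $Y_{0}=X$. Comparing with (a) gives (i) $\Leftrightarrow$ ($Y_{0}=X$) $\Leftrightarrow$ (ii). I do not expect a genuine obstacle here, as all of the substance is already contained in Corollary~\ref{C:structure corollary}; the only steps deserving a sentence of justification are the reduction from the class of all Bernoulli shifts to a single one and the monotonicity in (b).
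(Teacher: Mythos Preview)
Your proposal is correct and follows exactly the route the paper intends: the paper's own proof is the single sentence ``Applying $\mathcal{C}$ to the class of Bernoulli shifts shows the following,'' and you have simply unpacked how the max--min characterization of $Y_{0}$ in Corollary~\ref{C:structure corollary} translates into the equivalence (i)\,$\Leftrightarrow$\,(ii). The monotonicity observation (b) and the reduction to a single Bernoulli shift are the right details to record; there is nothing to add.
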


% \begin{proof}
% Throughout the proof, let $\mathcal{C}$ be the class consisting of the action $G\actson (B^{G},\beta^{\otimes G}).$

% (\ref{I:ergodic in the presence}) implies (\ref{I:WC BErn}):This follows from applying Corollary \ref{C:structure corollary} to the class $\mathcal{C}$ as well as the Ab\'{e}rt-Weiss result \cite{AbertWeiss} that all Bernoulli shifts are weakly equivalent.

The following two corollaries are simple consequences of Corollary \ref{C:structure corollary}.

\begin{cor}
Let  $G$ be a countable, discrete, sofic group, and let $G\actson X$ be an algebraic action. Then:
\begin{enumerate}[(i)]
    \item there is a largest closed, $G$-invariant $Y\leq X$ so that $G\actson (Y,m_{Y})$ is sofic with respect to $(\sigma_{k})_{k},$
\item $Y$ is the smallest closed, $G$-invariant subgroup of $X$ so that every sofic action is $G\actson X/Y$-ergodic in the presence of $G\actson X.$
\end{enumerate}
\end{cor}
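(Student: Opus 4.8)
The plan is to read this corollary off from Corollary \ref{C:structure corollary}, with $\mathcal{C}$ taken to be the class of all probability measure-preserving actions of $G$ that are sofic with respect to the fixed sofic approximation $(\sigma_k)_k$ (so that, throughout, ``sofic'' is meant in this relative sense, consistently with the appearance of $(\sigma_k)_k$ in part (i)). First I would verify the hypothesis of Corollary \ref{C:structure corollary}, namely that $\mathcal{C}$ is weakly closed under products; in fact $\mathcal{C}$ is closed under products outright, since the product of two actions sofic with respect to $(\sigma_k)_k$ is again sofic with respect to $(\sigma_k)_k$ --- this is exactly item (1) in the list of examples following the definition of ``weakly closed under products'' --- and any action that lies in $\mathcal{C}$ is in particular weakly contained in $\mathcal{C}$. (If one is uneasy about $X$ being non-metrizable, as Corollary \ref{C:structure corollary} is phrased for metrizable $X$, one reduces to that case in the standard way by passing to the metrizable algebraic quotients of $G\actson X$.)

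Granting this, Corollary \ref{C:structure corollary} applied to $X$ with $G$ acting by continuous automorphisms and to this $\mathcal{C}$ already yields: (a) a largest closed, $G$-invariant $Y\leq X$ with $G\actson(Y,m_Y)$ weakly contained in $\mathcal{C}$, and (b) the characterization of $Y$ as the smallest closed, $G$-invariant subgroup so that every action weakly contained in $\mathcal{C}$ is $G\actson X/Y$-ergodic in the presence of $G\actson X$. To finish, I would observe that for this $\mathcal{C}$ the notions ``weakly contained in $\mathcal{C}$'' and ``belongs to $\mathcal{C}$'' coincide: one direction is immediate, and the other is the statement that soficity relative to a fixed sofic approximation is preserved under weak containment. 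The proof of that is routine --- given $G\actson(Z,\zeta)$ weakly contained in a $(\sigma_k)_k$-sofic action $G\actson(W,\omega)$, each finite observable $\alpha$ of $Z$, finite $F\subseteq G$, and $\varepsilon>0$ can be matched to within $\varepsilon$ by an observable of $W$, and composing the sofic microstates for $G\actson(W,\omega)$ with those observables, then diagonalizing over a countable cofinal family of triples $(\alpha,F,\varepsilon)$, produces microstates witnessing soficity of $G\actson(Z,\zeta)$ relative to $(\sigma_k)_k$. Plugging this equivalence into (a) gives (i), and into (b) gives (ii).

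Thus almost all of the work is already done in Corollary \ref{C:structure corollary}; the only point carrying any content is the closure of soficity-with-respect-to-$(\sigma_k)_k$ under weak containment, and I expect that essentially definitional fact to be the main --- and quite minor --- obstacle. Everything else is bookkeeping: identifying the correct class $\mathcal{C}$, checking product-closure, and translating ``weakly contained in $\mathcal{C}$'' into ``sofic with respect to $(\sigma_k)_k$''.
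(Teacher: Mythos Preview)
Your proposal is correct and is exactly the approach the paper intends: the paper presents this corollary (together with the next one) as a ``simple consequence of Corollary~\ref{C:structure corollary}'' with no further proof, and your identification of $\mathcal{C}$ as the class of $(\sigma_k)_k$-sofic actions, together with the observation that this class is closed under products and under weak containment, is precisely what is needed to read off both parts. The only thing you add beyond the paper is the explicit verification that ``weakly contained in $\mathcal{C}$'' coincides with ``in $\mathcal{C}$'' for this $\mathcal{C}$, which the paper leaves implicit.
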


\begin{cor}\label{C:strongly sofic}
Let  $G$ be a countable, discrete, sofic group, and let $G\actson X$ be an algebraic action. Fix a sofic approximation $(\sigma_{k})_{k}$ of $G.$ Then:
\begin{enumerate}[(i)]
    \item there is a largest $G$-invariant $Y\leq X$ so that $G\actson (Y,m_{Y})$ is strongly sofic.
    \item $Y$ is the smallest closed subgroup of $X$ so that every probability measure-preserving action which is strongly sofic with respect to $(\sigma_{k})_{k}$ is $G\actson X/Y$-ergodic in the presence of $G\actson X$.
\end{enumerate}
\end{cor}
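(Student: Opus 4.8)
The plan is to deduce this corollary from Corollary \ref{C:structure corollary}, applied to the class $\mathcal{C}=\mathcal{C}_{(\sigma_{k})_{k}}$ of all probability measure-preserving actions of $G$ that are strongly sofic with respect to the fixed sofic approximation $(\sigma_{k})_{k}$. (Here $X$ is compact metrizable, as is standard in this context.) To run this argument I need two closure properties of $\mathcal{C}_{(\sigma_{k})_{k}}$: first, that it is weakly closed under products, and second, that an action weakly contained in $\mathcal{C}_{(\sigma_{k})_{k}}$ already lies in $\mathcal{C}_{(\sigma_{k})_{k}}$.

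For the first property I would prove the stronger statement that a finite product of actions which are strongly sofic with respect to $(\sigma_{k})_{k}$ is again strongly sofic with respect to $(\sigma_{k})_{k}$: strong soficity (\cite[Definition 3.4]{Me13}, implicit in \cite{AustinAdd}) is witnessed by sequences of approximately equivariant good models built over the vertex sets of the $\sigma_{k}$ whose empirical distributions converge to the given measure, and the product of two such witnessing sequences is a witnessing sequence for the product action with respect to the same $(\sigma_{k})_{k}$; in particular $\mathcal{C}_{(\sigma_{k})_{k}}$ is weakly closed under products, which is all that Corollary \ref{C:structure corollary} requires. For the second property I would use that being weakly contained in a class all of whose members are strongly sofic with respect to $(\sigma_{k})_{k}$ implies strong soficity with respect to $(\sigma_{k})_{k}$, obtained by pulling witnessing sequences back along weak-containment approximations --- the same mechanism that makes weak containment in a Bernoulli shift imply strong soficity, as recalled in the introduction (see \cite{Me13}). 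Together with the trivial reverse implication, this shows that a probability measure-preserving action is weakly contained in $\mathcal{C}_{(\sigma_{k})_{k}}$ if and only if it is strongly sofic with respect to $(\sigma_{k})_{k}$.

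Granting these two facts, Corollary \ref{C:structure corollary} applied to $\mathcal{C}_{(\sigma_{k})_{k}}$ produces a largest closed, $G$-invariant subgroup $Y\leq X$ with $G\actson(Y,m_{Y})$ weakly contained in $\mathcal{C}_{(\sigma_{k})_{k}}$, hence --- by the equivalence above --- strongly sofic with respect to $(\sigma_{k})_{k}$; and it is largest among all closed $G$-invariant subgroups $Z$ with $G\actson(Z,m_{Z})$ strongly sofic, since any such $Z$ is in particular weakly contained in $\mathcal{C}_{(\sigma_{k})_{k}}$. This gives (i). For (ii), the minimality characterization in Corollary \ref{C:structure corollary} --- that $Y$ is the smallest closed, $G$-invariant subgroup such that every action weakly contained in $\mathcal{C}_{(\sigma_{k})_{k}}$ is $G\actson X/Y$-ergodic in the presence of $G\actson X$ --- becomes the asserted statement once weak containment in $\mathcal{C}_{(\sigma_{k})_{k}}$ is replaced by strong soficity with respect to $(\sigma_{k})_{k}$. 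The main, and really the only nontrivial, obstacle is establishing the two closure properties of strong soficity; both are routine from the definition, but one must be careful that a single sofic approximation $(\sigma_{k})_{k}$ is used throughout, since strong soficity is not a priori independent of the chosen sofic approximation.
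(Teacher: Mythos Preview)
Your proposal is correct and matches the paper's approach: the paper does not give a proof of Corollary~\ref{C:strongly sofic} at all, simply stating that it (together with the preceding corollary on sofic actions) is a ``simple consequence of Corollary~\ref{C:structure corollary}.'' Your argument is precisely the unpacking the paper leaves implicit --- taking $\mathcal{C}$ to be the class of actions strongly sofic with respect to $(\sigma_{k})_{k}$, checking it is (weakly) closed under products, and using that weak containment in $\mathcal{C}$ coincides with membership in $\mathcal{C}$ so that the conclusions of Corollary~\ref{C:structure corollary} translate verbatim into the stated corollary.
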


For later use, we state a consequence of Corollary \ref{C:strongly sofic} for the study of algebraic actions with completely positive entropy. We will actually study completely positive entropy in the presence. See the discussion preceding Proposition 2.12 in \cite{Me13} for a precise definition of completely positive entropy in the presence. 

Though we  will not need the precise definition of entropy in the presence, we will briefly describe what entropy in the presence is  as well of the history of its definition. Suppose that $G$ is sofic with sofic approximation $\sigma_{k}\colon G\to S_{d_{k}}$ (where $S_{d_{k}}$ is the group of permutations on $d_{k}$ letters). Suppose that $G\actson (X,\mu),$ $G\actson (Y,\nu)$ are measure-preserving actions of $G$ on Lebesgue spaces, and that $G\actson (Y,\nu)$ is a factor of $G\actson (X,\mu)$ with factor map $\pi.$ We then have the notion of \emph{the entropy of $G\actson (Y,\nu)$ in the presence of $G\actson (X,\mu)$,} denoted $h_{(\sigma_{k})_{k}}(G\actson (Y,\nu):(X,\mu)).$ The term ``in the presence" here is borrowed from Voiculescu's notion of free entropy in the presence first defined in \cite{FreeEntropyDimensionIII}. This notion was implicitly defined by Kerr in \cite{KerrPartition}, and explicitly in \cite[Definition 2.7]{Me7} where a definition was given in terms of a given compact model for $G\actson (X,\mu),G\actson (Y,\nu)$ (in \cite{Me7} this is denoted by $h_{(\sigma_{k})_{k},\mu}(Y:X,G)).$ By \cite[Theorem 2.10]{Me7} this version in terms of a given compact model agrees with the version defined implicitly by Kerr in \cite{KerrPartition}. Theorem 2.10 of \cite{Me7} is intuitively obvious: entropy in the presence measures ``how many microstates for $G\actson (Y,\nu)$ have lifts to microstates for $G\actson (X,\mu).$" Kerr's version formulates this notion of ``how many microstates lift" using partitions, and the work in \cite{Me7} formulates this notion using a compact model. But both of these are measurements of how many microstates for $G\actson Y$ lift, and by methods now entirely standard in the field (first appearing in \cite{KLi2}) it is easy to equate the two quantities.  Shortly after the work in \cite{Me7}, a definition of topological entropy in the presence was given in \cite[Definition 9.3]{LiLiang2}. In \cite{LiLiang2}, the authors call this ``the entropy of $G\actson (Y,\nu)$ relative to the extension $G\actson (X,\mu).$" We prefer the name ``entropy in the presence" to avoid confusion with the ``entropy of $G\actson (X,\mu)$ relative to $G\actson (Y,\nu)$" defined in \cite{Me13}, but this is just a matter of taste. For example, if $G$ is amenable then the entropy of $G\actson (Y,\nu)$ in the presence of $G\actson (X,\mu)$ is just the entropy of $G\actson (Y,\nu)$ (by \cite[Theorem A.2]{Me7}), which is clearly not equal to the entropy of $G\actson (X,\mu)$ relative to $G\actson (Y,\nu).$ For example, take $X=Y.$ Then the entropy of $G\actson (X,\mu)$ relative to the extension $G\actson (X,\mu)$  as defined in \cite{LiLiang2} is equal to the entropy of $G\actson (X,\mu)$ (even in the sofic case), whereas the usual entropy of $G\actson (X,\mu)$ relative to $G\actson (X,\mu)$ is zero. Similar comments hold for the topological case. See \cite[Definition 3.3, Definition 3.4]{Me12} for a comparison of the definitions of topological and measure entropy in the presence. See the comments after Theorem 2.4 of \cite{SewardKrieger2} for the related notion of \emph{outer Rokhlin entropy} which is analogous to entropy in the presence for Rokhlin entropy (see e.g. \cite[Proposition 2.13]{Me13} for a comparison of the two quantities).

The notion of entropy in the presence is designed to fix the fact that sofic entropy as defined by Bowen can increase under factor maps. For example, it is a trivial consequence of the definitions that $h_{(\sigma)_{k}}(G\actson (Y,\nu):(X,\mu))\leq h_{(\sigma_{k})_{k}}(G\actson (X,\mu)).$ It is also straightforward to see that $h_{(\sigma_{k})_{k}}(G\actson (Y,\nu):(X,\mu))$ is increasing if we replace $G\actson(Y,\nu)$ with an intermediate factor between $Y$ and $X,$ and decreasing if we replace $G\actson (X,\mu)$ with an extension. Because it fixes the fact that entropy can increase under factors, entropy in the presence often gives the correct way to define properties of actions which depend upon their factors. For instance, it gives a different notion of a Pinsker factor (called the outer Pinsker factor) which has better properties than the usual Pinsker factor (see e.g. \cite{Me7,Me13}). It also gives a different notion of complete positive entropy. We say that $G\actson (X,\mu)$ has \emph{completely positive entropy in the presence}, if for every nontrivial factor $G\actson (Y,\nu)$ of $G\actson (X,\mu)$ the entropy of $G\actson (Y,\nu)$ in the presence of $G\actson (X,\mu)$ is positive. This trivially implies that every nontrivial factor of $G\actson (X,\mu)$ has positive entropy.

% For our purposes, it is sufficient to note that completely positive entropy in the presence implies that every nontrivial factor has positive entropy.

\begin{cor}\label{C:generating  by cpe subgroups}
Let $G$ be a countable, discrete, sofic group with sofic approximation $(\sigma_{k})_{k}.$ Let $G\actson X$ be an algebraic action. Suppose that $(X_{j})_{j\in J}$ are $G$-invariant, closed subgroups of $X,$ and that
\[X=\bigvee_{j\in J}X_{j}.\]
If each $G\actson (X_{j},m_{X_{j}})$ is strongly sofic and has completely positive measure entropy in the presence with respect to $(\sigma_{k})_{k}$, then $G\actson (X,m_{X})$ is strongly sofic and has completely positive measure entropy in the presence with respect to $(\sigma_{k})_{k}.$
\end{cor}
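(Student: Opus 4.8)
The plan is to treat the two assertions separately: strong soficity is immediate from the lattice structure already established, and completely positive entropy in the presence will be obtained by localizing the outer Pinsker factor of $G\actson(X,m_{X})$ to one of the generating subgroups.

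For strong soficity, Corollary \ref{C:strongly sofic} furnishes a largest $G$-invariant closed subgroup $Y\leq X$ for which $G\actson(Y,m_{Y})$ is strongly sofic with respect to $(\sigma_{k})_{k}$. By hypothesis every $X_{j}\leq Y$, so, since $Y$ is a closed subgroup containing each $X_{j}$, we get $X=\bigvee_{j\in J}X_{j}\leq Y$; hence $Y=X$ and $G\actson(X,m_{X})$ is strongly sofic.

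For completely positive entropy in the presence I would argue by contradiction. Suppose $G\actson(X,m_{X})$ does not have completely positive entropy in the presence. By \cite{Me13} the outer Pinsker factor of an algebraic action of a sofic group is again an algebraic action, so it is of the form $G\actson(X/K,m_{X/K})$ for a proper, closed, normal, $G$-invariant subgroup $K\trianglelefteq X$; and, being the outer Pinsker factor, $G\actson(X/K,m_{X/K})$ has zero entropy in the presence of $G\actson(X,m_{X})$. Since $K$ is a proper closed subgroup while $\bigvee_{j\in J}X_{j}=X$, there is a $j_{0}\in J$ with $X_{j_{0}}\not\subseteq K$. Put $N=X_{j_{0}}\cap K$, a proper, closed, $G$-invariant normal subgroup of $X_{j_{0}}$. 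The restriction of the quotient map $X\to X/K$ to $X_{j_{0}}$ has image the nontrivial closed $G$-invariant subgroup $X_{j_{0}}K/K$ of $X/K$ and induces a $G$-equivariant topological isomorphism $X_{j_{0}}/N\cong X_{j_{0}}K/K$; in particular $G\actson(X_{j_{0}}/N,m_{X_{j_{0}}/N})$ is a nontrivial algebraic factor of $G\actson(X_{j_{0}},m_{X_{j_{0}}})$. I would then show that this factor has zero entropy in the presence of $G\actson(X_{j_{0}},m_{X_{j_{0}}})$, contradicting the hypothesis that $G\actson(X_{j_{0}},m_{X_{j_{0}}})$ has completely positive entropy in the presence; this contradiction completes the proof.

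The crux is thus the ``restriction'' statement: if the quotient $X\to X/K$ has zero entropy in the presence, then so does the quotient $X_{j_{0}}\to X_{j_{0}}/N$. My plan is to pass to topological entropy in the presence. Both $G\actson(X,m_{X})$ and $G\actson(X_{j_{0}},m_{X_{j_{0}}})$ are strongly sofic (the former by the previous step, the latter by hypothesis), so the results of \cite{Me12} relating strong soficity to the coincidence of measure and topological entropy identify the measure entropy in the presence of the factor $X/K$ of $X$ with its topological entropy in the presence, and similarly for the factor $X_{j_{0}}/N$ of $X_{j_{0}}$. It therefore suffices to show that the topological entropy in the presence of $X_{j_{0}}/N$ relative to $X_{j_{0}}$ is at most that of $X/K$ relative to $X$, which is $0$. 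This should follow from monotonicity of topological entropy in the presence under the commuting square of $G$-equivariant topological embeddings $X_{j_{0}}\hookrightarrow X$ and $X_{j_{0}}/N\hookrightarrow X/K$ lying over the two quotient maps: a microstate for $G\actson(X_{j_{0}},m_{X_{j_{0}}})$ is, via $X_{j_{0}}\hookrightarrow X$, also a microstate for $G\actson(X,m_{X})$, and an $\varepsilon$-separated family of the restrictions of such microstates to $X_{j_{0}}/N$ is carried by the bi-uniformly continuous embedding $X_{j_{0}}/N\hookrightarrow X/K$ to a comparably large separated family of restrictions of microstates to $X/K$. The main obstacles are to make this monotonicity precise for whichever formulation of topological entropy in the presence is in force, and to verify that the identification of measure and topological entropy in the presence for strongly sofic actions from \cite{Me12} is available in the relative form needed here; both are expected to be routine given the machinery already in place, but this is where the substantive work lies.
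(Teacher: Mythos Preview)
Your proposal is correct and follows essentially the same route as the paper: strong soficity via Corollary \ref{C:strongly sofic}, then realize the outer Pinsker factor of $G\actson(X,m_X)$ as an algebraic quotient $X/K$ via \cite[Theorem 1.3]{Me13} (which requires the strong soficity just established), pass to topological entropy in the presence via \cite[Theorem 1.1]{Me12}, and use the monotonicity $h_{(\sigma_k)_k,\topo}(G\actson X_{j}/N:X_{j})\leq h_{(\sigma_k)_k,\topo}(G\actson X/K:X)$ to force the restricted quotient to be trivial. The only cosmetic difference is that the paper argues directly for every $j$ (showing each $q_K(X_j)$ is trivial, hence $X/K=\bigvee_j q_K(X_j)=\{1\}$), whereas you argue by contradiction with a single $j_0$; the two obstacles you flag (the monotonicity inequality and the relative form of \cite{Me12}) are exactly the inputs the paper invokes without further comment.
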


\begin{proof}
The fact that $G\actson (X,m_{X})$ is strongly sofic with respect to $(\sigma_{k})_{k}$ is automatic from Corollary \ref{C:strongly sofic}, so we turn to proving that $G\actson (X,m_{X})$ has completely positive measure entropy in the presence. Since $G\actson (X,m_{X})$ is strongly sofic, it follows from \cite[Theorem 1.3]{Me13} that there is a closed, normal subgroup $Y$ of $X$ so that the Pinsker factor is of the form $G\acston X/Y$ (and the factor map is just the quotient map $q_{Y}\colon X\to X/Y$).

For  $j\in J,$ let $K_{j}=q_{Y}(X_{j}).$ By \cite[Theorem 1.1]{Me12} and the fact that $G\actson (X_{j},m_{X_{j}})$ is strongly sofic with respect to $(\sigma_{k})_{k}$, we then have that
\begin{align*}
    h_{(\sigma_{k})_{k}}(G\actson (K_{j},m_{K_{j}}):(X_{j},m_{X_{j}}))=h_{(\sigma_{k})_{k},\topo}(G\actson K_{j}:X_{j})&\leq h_{(\sigma_{k})_{k},\topo}(G\actson X/Y:X)\\
    &=h_{(\sigma_{k})_{k}}(G\actson (X/Y,m_{X/Y}):(X,m_{X})),
\end{align*}
where in the last line we use  that $G\actson (X,m_{X})$ is strongly sofic.
By definition of the outer Pinsker factor,
\[h_{(\sigma_{k})_{k}}(G\actson (K_{j},m_{X_{j}}):(X_{j},m_{X_{j}}))\leq 0.\]
Since $G\actson (X_{j},m_{X_{j}})$ has completely positive entropy in the presence, the above shows that  $K_{j}=\{1\}.$ Since $q_{Y}$ is continuous and $X=\bigvee_{j}X_{J},$ it follows that
\[X/Y=q_{Y}(X)=\bigvee_{j}q_{Y}(X_{j})=\{1\}.\]
By definition of the outer Pinsker factor, this shows that $G\actson (X,m_{X})$ has completely positive entropy in the presence.

\end{proof}

%\begin{thm}
%Suppose that $G$ is a countable, discrete group and that $G\actson (X,m_{X})$ is an algebraic action. Suppose that $G\actson (X,m_{X})$ has a dense set of periodic points. Then $G\actson (X,m_{X})$ is weakly contained in the class of finite actions. In particular, $G\actson (X,m_{X})$ is sofic.
%
%\end{thm}
%
%\begin{proof}
%Let $Y$ be as in Corollary \ref{C:structure corollary} for the class $\mathcal{C}$ of finite-translation actions. If the theorem is false, then $Y\ne X.$ Since $Y$ is closed and $G\actson X$ has a dense set of periodic points, we may find two periodic points $x_{1},x_{2}\in X$ with $x_{1}Y\ne x_{2}Y.$ Let $H_{j}=\Stab(x_{j}),$ and set $A=G/H_{1}\sqcup G/H_{2}.$ Equip $A$ with the probability measure $\mu$ given by
%\[\mu(A)=\frac{1}{2}u_{G/H_{1}}(A\cap G/H_{1})+\frac{1}{2}u_{G/H_{2}}(A\cap G/H_{2}).\]
%Let $\psi\colon A\to X$ be the $G$-equivariant map given by $\psi(gH_{j})=gx_{j}$ for $g\in G/H_{j},j=1,2.$ This is clearly a well-defined, Borel, $G$-equivariant map, and it is also clear that $q_{Y}\circ \psi$ is not almost surely constant. This gives a contradiction and completes the proof.
%
%\end{proof}

\section{Measurably Extending Convolution}\label{S:extend convolve}

In this section, we provide the necessary background material for our main applications of Corollary \ref{C:Bern weak contain}. The main tool ends up being a measurable way to extend the convolution operation between real-valued functions.

 If $V$ is a vector space over $\R,$ and $m,k\in \N\cup\{\infty\}$ we will regard $M_{m,k}(V)$ as the vector space of all $m\times k$-matrices with entries in $V$. Right now this is a formal object without additional algebra structure, but of course it will have such a structure once $V$ is an algebra. It should be clear what $M_{m,k}(V)$ means if $m,k$ are finite. If, say, $m$ is infinite and $k$ is finite, we simply mean all doubly indexed arrays $(v_{ij})_{1\leq j\leq k,i\in \N}$ with $v_{ij}\in V$ for all $i,j.$ Similar remarks apply to the case that $k$ is infinite and $m$ is not, and when both are infinite. We identify $V^{m}$ with $M_{m,1}(V).$

Let $\C(G)$ denote the ring which is $c_{c}(G,\C)$ as a set and with the product operation of convolution, i.e.
\[(\alpha\beta)(g)=\sum_{h\in G}\alpha(h)\beta(h^{-1}g)\mbox{ for $\alpha,\beta\in \C(G).$}\]
Let $\R(G),\Z(G)$ etc. denote the subrings corresponding to $c_{c}(G,\R),c_{c}(G,\Z).$
For a $m\in \N\cup\{\infty\},$ we identify $\C(G)^{m}$ with $c_{c}(G,\C^{m}),$ and $\C(G)^{\oplus m}$ with the subset of $\C(G)^{m}$ consisting of those $\alpha$ so that the function $\{j\in \N:j\leq m\}\times G\to \C$ given by $(j,g)\mapsto \alpha(g)(j)$ is finitely supported. Similar remarks apply to $\R(G)^{\oplus m},\R(G)^{m},\Z(G)^{\oplus m},\Z(G)^{m}.$

Now let $m\in \N\cup\{\infty\},k\in \N.$ If $\xi\in M_{k,m}(\C(G)),$ then we define a linear map $r(\xi)\colon (\C^{k})^{G}\to (\C^{m})^{G}$ by
\[(r(\xi)\zeta)(g)(j)=\sum_{l=1}^{k}\sum_{h\in G}\zeta(h)(l)\xi_{lj}(h^{-1}g)\mbox{ for $\zeta\in (\C^{k})^{G},g\in G,1\leq j\leq m$.}\]
In the above equation, we are making the following convention: when $m\in \N\cup \{\infty\},$ to say that some equation (or condition) holds for all $1\leq j\leq m$ means that it holds for all \emph{integers} $j$ with $1\leq j\leq m.$ So in the above equation, when $m=\infty,$ we are only defining $(r(\xi)\zeta)(g)(j)$ for $j\in \N.$ We keep this convention throughout the rest of the paper.
This expression defining $r(\xi)\zeta$ makes sense, since for every $1\leq l\leq k,1\leq j\leq m$ we have that $\xi_{lj}$ is compactly supported. For later use, we also note that the same equation as above allows us to define $r(\xi)\zeta$ for $\xi\in M_{k,m}(\C^{G}),\zeta\in \C(G)^{\oplus k}.$

If $m\in \N,k\in \N\cup\{\infty\},$ then for $\xi\in M_{k,m}(\C(G))$ we also have a linear map $\lambda(\xi)\colon (\C^{m})^{G}\to (\C^{k})^{G}$ by
\[(\lambda(\xi)\zeta)(g)(j)=\sum_{l=1}^{m}\sum_{h\in G}\xi_{jl}(h)\zeta(h^{-1}g)(l),\mbox{ for $\zeta\in (\C^{m})^{G},g\in G,1\leq j\leq k.$}\]
Similar remarks as above allow us to define $\lambda(\xi)\zeta$ for $\xi\in M_{k,m}(\C^{G}),\alpha\in \C(G)^{\oplus m}.$
For $g\in G,A\in M_{k,m}(\C^{G}),$ we let $\lambda(g)A$ be given by $(\lambda(g)A)_{ij}=\lambda(g)(A_{ij}).$ Technically, we are in some sense multiplying $A$ by the matrix $B\in M_{k}(\C^{G})$ with $B_{ij}=\delta_{i=j}g,$ and we should adopt notation to account for this. We think this mild abuse of notation will not cause any problems.

 For $\xi\in \C^{G},$ we let $\xi^{*}\in \C^{G}$ be given by $\xi^{*}(g)=\overline{\xi(g^{-1})}.$
Notice that if $\alpha,\beta\in \C(G),$ then $(\alpha\beta)^{*}=\beta^{*}\alpha^{*}.$
For $m,k\in \N\cup\{\infty\},$ $A\in M_{m,k}(\C^{G}),$ we define $A^{*}\in M_{k,m}(\C^{G})$ by
\[(A^{*})_{ij}=(A_{ji})^{*}.\]

For $m\in \N\cup\{\infty\},$ we let $q\colon (\R^{m})^{G}\to (\T^{m})^{G}$ be given by
\[q(x)(g)(j)=x(g)(j)+\Z\mbox{ for $g\in G,1\leq j\leq m.$}\]
For the remainder of the paper, we will reserve $q$ for the above quotient map. We will suppress the dependence upon $m,G$ in the notation.
For $\xi\in M_{m,k}(\R(G))$ we define maps
\[\Theta_{\xi}\colon (\R^{k})^{G}\to (\T^{m})^{G},\Psi_{\xi}\colon (\R^{k})^{G}\to \R^{m}\]
by
\[\Theta_{\xi}(x)=q(r(\xi^{*})x),\Psi_{\xi}(x)=(r(\xi^{*})x)(1).\]

Clearly, $\Theta_{\xi}$ is defined via convolution and this is simple to make sense of when $\xi_{ij}$ is compactly supported for each $i,j.$ The major goal of this section is to extend this to a larger vector space of $\xi,$ and we will do this via a continuity argument. So we will need topologies on $M_{m,k}(\R(G))$ and the space of maps $(\R^{k})^{G}\to (\T^{m})^{G}.$

We put a topology on $M_{m,k}(\R(G))$ by embedding it in $M_{m,k}(\ell^{2}(G,\R)).$ We give $M_{m,k}(\ell^{2}(G,\R))$ the product topology, i.e. a sequence $(\xi^{(n)})_{n}$ in $M_{m,k}(\ell^{2}(G,\R))$ converges to $\xi \in M_{m,k}(\ell^{2}(G,\R)$ if
\[\|\xi^{(n)}_{ij}-\xi_{ij}\|_{2}\to_{n\to\infty}0\mbox{ for all $1\leq i\leq m,1\leq j\leq k.$}\]
If both $m,k$ are finite, this can be given by the $\|\cdot\|_{2}$-norm on $M_{m,k}(\ell^{2}(G,\R)):$
\[\|\xi\|_{2}^{2}=\sum_{i,j}\|\xi_{ij}\|_{2}^{2},\mbox{\,\,\, for $\xi\in M_{m,k}(\ell^{2}(G,\R)).$}\]
As for the space of maps $(\R^{k})^{G}\to (\T^{m})^{G},$ we use the following notion. Let $X$ be a standard Borel space, and $\mu$ a completed Borel probability measure on $X.$ Given a metric space $(Y,d)$ we let $\Meas(X,\mu,Y)$ be the space of $\mu$-measurable maps $f\colon X\to Y.$ As is typical in measure theory we will identify two such maps if they agree almost everywhere, but will almost always suppress this from the notation. We equip $\Meas(X,\mu,Y)$ with the metric
\[d_{m}(f,g)=\int_{X}\min(d(f(x),g(x)),1)\,d\mu(x).\]
It is well known that this gives $\Meas(X,\mu,Y)$ the measure topology: a sequence $(f_{n})_{n}$ in $\Meas(X,\mu,Y)$ converges with respect to $d_{m}$ to an $f\in \Meas(X,\mu,Y)$ if and only if it converges in measure, i.e.  for every $\varepsilon>0,$
\[\mu(\{x:d(f_{n}(x),f(x))>\varepsilon\})\to_{n\to\infty}0.\]
% It is well known that convergence in measure comes from a unique metrizable topology on $\Meas(X,\mu,Y).$
We will not work explicitly with this metric. All we will need to use is that if $(Z,\Delta)$ is another metric space, then $\Phi\colon Z\to \Meas(X,\mu,Y)$ is uniformly continuous if and only if for every $\varepsilon>0,$ there is a $\delta>0$ so that if $z_{1},z_{2}\in Z$ and $\Delta(z_{1},z_{2})<\delta,$ then $\mu(\{x:d(\Phi(z_{1})(x),\Phi(z_{2})(x))>\varepsilon\})<\varepsilon.$
This is a well known exercise that we leave to the reader.
If $K$ is a compact, metrizable space, then the notion of uniform continuity for functions defined on $\Meas(X,\mu,K)$ does not depend on a choice of a compatible metric on $K,$ so we will typically not explicitly put a metric on $K.$

Unless otherwise specified, for a $k\in \N$ we will endow $\R^{k}$ with the metric induced from the $\|\cdot\|_{2}$-norm:
\[\|x\|_{2}=\left(\sum_{j}|x_{j}|^{2}\right)^{1/2}.\]
We endow $\R^{\infty}$ with the metric
\[d(x,y)=\left(\sum_{n=1}^{\infty}2^{-n}\min(1,|x(n)-y(n)|^{2})\right)^{1/2}.\]
For $k\in \N$ we also endow $M_{\infty,k}(\ell^{2}(G,\R))$ with the metric
\[d(A,B)=\left(\sum_{n=1}^{\infty}\sum_{l=1}^{k}2^{-n}\min(1,\|A_{n,l}-B_{n,l}\|_{2}^{2})\right)^{1/2}.\]
However, we will rarely use these metrics. We will only use the following facts (which are standard metric space exercises):
\begin{itemize}
    \item if $(\Omega,\rho)$ is a metric space, then $f\colon \Omega\to \R^{\infty}$ is uniformly continuous if and only if $\pi_{j}\circ f$ is uniformly continuous for all $j\in\N,$ where $\pi_{j}\colon \R^{\infty}\to \R$ is given by $\pi_{j}(x)=x(j).$
    \item The maps $\pi_{j}\colon \R^{\infty}\to \R$ for $j\in \N$ are uniformly continuous.
    \item Define maps $ \Pi_{l}\colon M_{\infty,k}(\ell^{2}(G,\R))\to M_{1,k}(\ell^{2}(G,\R))$  by
    \[\Pi_{l}\left(\begin{bmatrix}
    \xi_{1}\\
    \xi_{2}\\
    \vdots
    \end{bmatrix}\right)=\xi_{l}.\] If $(\Omega,\rho)$ is a metric space, then $f\colon \Omega\to M_{\infty,k}(\ell^{2}(G,\R))$ is uniformly continuous if and only if $\Pi_{l}\circ f\colon \Omega\to M_{1,k}(\ell^{2}(G,\R))$ is uniformly continuous for all $l\in \N,$
    \item the maps $\Pi_{l}\colon M_{\infty,k}(\ell^{2}(G,\R))\to M_{1,k}(\ell^{2}(G,\R))$
    are all uniformly continuous for $l\in \N.$
\end{itemize}

We say that $\nu\in \Prob(\R^{k})$ has \emph{a finite second moment} if $\int \|x\|_{2}^{2}\,d\nu(x)<\infty.$ We say that $\nu$ has \emph{mean zero} if $\int \|x\|_{2}\,d\nu(x)<\infty$ and $\int x(j)\,d\nu(x)=0$ for all $j=1,\cdots,k.$

Finally, given a Lebesgue space $(X,\mu)$ we let $L^{2}_{0}(X,\mu)$ be the subspace of $L^{2}(X,\mu)$ consisting of functions $f$ with $\int f\,d\mu=0.$

\begin{prop}\label{P:hs nonsense}
Let $G$ be a countable, discrete, group and let $k\in \N,m\in \N\cup\{\infty\}.$ Suppose that $\nu\in \Prob(\R^{k})$ has mean zero and a finite second moment. Then the map $\xi\in M_{m,k}(\R(G))\mapsto \Psi_{\xi}$ extends uniquely to a uniformly continuous map $M_{m,k}(\ell^{2}(G,\R))\to \Meas((\R^{k})^{G},\nu^{\otimes G},\R^{m}).$

\end{prop}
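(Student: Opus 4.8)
The plan is to prove the estimate at the level of a single row and then assemble. First I would reduce to the case $m\in\N$, in fact to $m=1$: by the stated facts about $\R^\infty$ and $M_{\infty,k}(\ell^2(G,\R))$, a map into $\Meas((\R^k)^G,\nu^{\otimes G},\R^\infty)$ is uniformly continuous iff each coordinate map $\pi_j\circ(\cdot)$ is, and $\pi_j\circ\Psi_\xi=\Psi_{\Pi_j\xi}$ with $\Pi_j$ uniformly continuous; so it suffices to handle $\xi\in M_{1,k}(\R(G))=\R(G)^{\oplus k}$, i.e.\ $\Psi_\xi\colon(\R^k)^G\to\R$. Since $\Psi_\xi$ is linear in $\xi$ and (by the criterion recalled just before the proposition) uniform continuity of $\xi\mapsto\Psi_\xi$ into the measure-topology space amounts to controlling $\nu^{\otimes G}(\{x:|\Psi_\xi(x)-\Psi_{\xi'}(x)|>\varepsilon\})$ in terms of $\|\xi-\xi'\|_2$, by linearity this reduces to showing: for every $\varepsilon>0$ there is $\delta>0$ with $\nu^{\otimes G}(\{|\Psi_\eta|>\varepsilon\})<\varepsilon$ whenever $\|\eta\|_2<\delta$, for $\eta\in\R(G)^{\oplus k}$.

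The key computation is a second-moment bound. For $\eta=(\eta_1,\dots,\eta_k)\in\R(G)^{\oplus k}$ we have, unwinding the definitions, $\Psi_\eta(x)=(r(\eta^*)x)(1)=\sum_{l=1}^k\sum_{h\in G}\eta_l^*(h^{-1})x(h)(l)=\sum_{l,h}\eta_l(h)\,x(h)(l)$. This is a (finite) linear combination of the coordinate random variables $x\mapsto x(h)(l)$, which under $\nu^{\otimes G}$ are independent across $h$, mean zero (because $\nu$ has mean zero), and have second moment bounded by the fixed constant $C:=\int\|y\|_2^2\,d\nu(y)<\infty$. Hence
\[
\int_{(\R^k)^G}|\Psi_\eta(x)|^2\,d\nu^{\otimes G}(x)\;\le\;\sum_{h\in G}\mathrm{Var}_\nu\Big(\sum_{l}\eta_l(h)\,y(l)\Big)\;\le\;C\sum_{h\in G}\sum_{l=1}^k|\eta_l(h)|^2\;=\;C\|\eta\|_2^2 .
\]
(The cross terms in different $h$ vanish by independence and mean zero; within a fixed $h$ one bounds $\mathrm{Var}(\sum_l\eta_l(h)y(l))\le C\sum_l|\eta_l(h)|^2$ using Cauchy--Schwarz on the covariance matrix of $\nu$, whose trace is $\le C$, or simply $|\sum_l \eta_l(h) y(l)|^2 \le \|y\|_2^2 \sum_l |\eta_l(h)|^2$.) By Chebyshev, $\nu^{\otimes G}(\{|\Psi_\eta|>\varepsilon\})\le \varepsilon^{-2}C\|\eta\|_2^2$, which is $<\varepsilon$ once $\|\eta\|_2^2<\varepsilon^3/C$. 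This gives uniform continuity of $\xi\mapsto\Psi_\xi$ on $\R(G)^{\oplus k}$.

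Finally, for existence and uniqueness of the extension: $\R(G)^{\oplus k}$ is dense in $\ell^2(G,\R)^{\oplus k}=M_{1,k}(\ell^2(G,\R))$, the target $\Meas((\R^k)^G,\nu^{\otimes G},\R^m)$ is a complete metric space in the measure metric $d_m$, and a uniformly continuous map from a dense subset of a metric space into a complete metric space extends uniquely to a uniformly continuous map on the whole space. Actually the $L^2$ bound above shows more: the map is Lipschitz from $(M_{1,k}(\ell^2),\|\cdot\|_2)$ into $L^2((\R^k)^G,\nu^{\otimes G})$, since $\|\Psi_\xi-\Psi_{\xi'}\|_{L^2}\le\sqrt C\,\|\xi-\xi'\|_2$; so one may alternatively first extend to an isometric-up-to-$\sqrt C$ map into $L^2$ and then compose with the (uniformly continuous) inclusion $L^2\hookrightarrow\Meas$. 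Reassembling the rows via the $\R^\infty$ and $M_{\infty,k}$ criteria recalled in the excerpt yields the statement for general $m\in\N\cup\{\infty\}$. I expect no serious obstacle here; the only point requiring a little care is the bookkeeping in the $m=\infty$ case, namely checking that $\pi_j\circ\Psi_\xi=\Psi_{\Pi_j\xi}$ so that the coordinatewise criterion applies, and confirming that the extensions on each row are mutually consistent — both of which follow formally from the linearity and the definitions.
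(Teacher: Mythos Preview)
Your proposal is correct and follows essentially the same route as the paper: reduce to $m=1$ via the coordinate maps $\pi_j\circ\Psi_\xi=\Psi_{\Pi_j\xi}$, compute the $L^2$-norm of $\Psi_\eta$ using independence and mean zero to get a Lipschitz bound $\|\Psi_\eta\|_{L^2}\le \sqrt{C}\,\|\eta\|_2$, and extend by density and completeness. The only cosmetic difference is that the paper phrases the variance computation via the right-shift unitary representation, writing $\Psi_\xi=\sum_j \rho(\xi_j^*)X_j$ and using $\langle\rho(\xi_j\xi_j^*)X_j,X_j\rangle=(\xi_j\xi_j^*)(1)\|X_j\|_2^2$, whereas you unwind the sum directly; the content is identical.
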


\begin{proof}
For $1\leq l\leq m,$ let $\pi_{l}\colon \R^{m}\to \R$ be given by $\pi_{l}(x)=x(l).$ It suffices to show that for each $1\leq l\leq m,$ the map $\xi\mapsto \pi_{l}\circ \Psi_{\xi}$ has a uniformly continuous extension  $M_{m,k}(\ell^{2}(G,\R))\to \Meas((\R^{k})^{G},\nu^{\otimes G},\R).$

For $\xi\in M_{m,k}(c_{c}(G,\R))$ write
\[\xi=\begin{bmatrix}
\xi_{1}\\
\xi_{2}\\
\vdots
\end{bmatrix}.\]
Then $\pi_{l}\circ \Psi_{\xi}=\Psi_{\xi_{l}}$ for all $1\leq l\leq m.$ So it suffices to show that the map $\xi\mapsto \Psi_{\xi_{l}}$ has a continuous extension. Since $\xi\mapsto \xi_{l}$ is clearly uniformly continuous we may, and will, assume that $m=1.$

So let $\xi\in M_{1,k}(c_{c}(G,\R)).$ Write
\[\xi=\begin{bmatrix}
\xi_{1}&\xi_{2}&\cdots \xi_{k}
\end{bmatrix},\] where $\xi_{j}\in c_{c}(G,\R)$ for $1\leq j\leq k.$ For $1\leq j\leq k,$ let $X_{j}\in L^{2}_{0}((\R^{k})^{G},\nu^{\otimes G})$ be given by $X_{j}(x)=x(1)(j).$ Observe that $X_{j}$ is indeed in $L^{2}_{0}((\R^{k})^{G},\nu^{\otimes G}),$ as $\nu$ has mean zero and a finite second moment. For $g\in G,$ let $\rho(g)$ be the right shift action of $G$ on $(\R^{k})^{G}$ and consider $\rho$ as unitary representation $\rho\colon G\to \mathcal{U}(L^{2}((\R^{k})^{G},\nu^{\otimes G}))$
defined by $(\rho(g)f)(x)=f(\rho(g)^{-1}x).$
Then, for every $\alpha\in c_{c}(G,\C)$ we may define $\rho(\alpha)\in B(L^{2}((\R^{k})^{G},\nu^{\otimes G}))$ by
\[\rho(\alpha)=\sum_{g\in G}\alpha(g)\rho(g).\]
Then $\rho$ is a $*$-representation of the $*$-algebra $\C(G).$ Moreover, it is easy to see that
\[\Psi_{\xi}=\sum_{j=1}^{k}\rho(\xi_{j}^{*})X_{j}.\]
For $1\leq j\leq k,$ we have that
\[\|\rho(\xi_{j}^{*})X_{j}\|_{2}^{2}=\ip{\rho(\xi_{j}^{*})X_{j},\rho(\xi_{j}^{*})X_{j}}=\ip{\rho(\xi_{j}*\xi_{j}^{*})X_{j},X_{j}}.\]
Since $X_{j}$ has  mean zero and independent translates, the above equality implies that
\[\|\rho(\xi_{j}^{*})X_{j}\|_{2}^{2}=(\xi_{j}*\xi_{j}^{*})(1)\|X_{j}\|_{2}^{2}=\|\xi_{j}\|_{2}^{2}\|X_{j}\|_{2}^{2}.\]
So by the Cauchy-Schwartz inequality,
\[\|\Psi_{\xi}\|_{2}\leq \sum_{j=1}^{k}\|\xi_{j}\|_{2}\|X_{j}\|_{2}\leq  \|\xi\|_{2}\left(\sum_{j}\|X_{j}\|_{2}^{2}\right)^{1/2}.\]
Since the map $M_{1,k}(\R(G))\to L^{2}((\R^{k})^{G},\nu^{\otimes G},\R)$ given by $\xi\mapsto \Psi_{\xi}$ is linear, the above shows that it is Lipschitz. So it has a unique uniformly continuous extension to a map $M_{1,k}(\ell^{2}(G,\R))\to L^{2}((\R^{k})^{G},\nu^{\otimes G},\R).$ Since the inclusion
\[L^{2}((\R^{k})^{G},\nu^{\otimes G},\R)\hookrightarrow \Meas((\R^{k})^{G},\nu^{\otimes G},\R)\]
 is uniformly continuous, we are done.

\end{proof}

\begin{cor}\label{C:meas extens}
Let $G$ be a countable discrete group, $k\in \N,m\in \N\cup\{\infty\}$ and $\nu\in \Prob(\R^{k}).$ We may  uniquely extend the map $M_{m,k}(\R(G))\to \Meas((\R^{k})^{G},\nu^{\otimes G}, (\T^{G})^{m}),$ $\xi\mapsto \Theta_{\xi}$ to a map $M_{m,k}(\ell^{2}(G,\R))\to\Meas((\R^{k})^{G},\nu^{\otimes G}, (\T^{m})^{G}) $ which is  uniformly continuous. Furthermore, if we continue to denote $\Theta_{\xi}$ the image of $\xi\in M_{m,k}(\ell^{2}(G,\R))$ under this map, then $\xi\mapsto \Theta_{\xi}$ satisfies the following properties:
\begin{enumerate}[(a)]
\item \label{I:left shift equiv} $\Theta_{\xi}$ is equivariant with respect to the left shift actions of $G$ on $(\R^{k})^{G},(\T^{m})^{G}$ for every $\xi\in M_{m,k}(\ell^{2}(G,\R)),$ 
\item $\Theta_{\xi}\circ \rho(g)=\Theta_{\rho(g^{-1})\xi},$ for all $g\in G,\xi\in M_{m,k}(\ell^{2}(G,\R)),$ \label{I:right shift equiv}
\item $\xi\mapsto \Theta_{\xi}$ is an additive homomorphism. \label{I:additive homom}
\end{enumerate}
\end{cor}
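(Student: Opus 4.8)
The plan is to bootstrap everything from Proposition \ref{P:hs nonsense}, which already gives the uniformly continuous extension of $\xi \mapsto \Psi_\xi$. First I would reduce the statement about $\Theta_\xi$ to the statement about $\Psi_\xi$ by exploiting the $G$-equivariance built into the definition $\Theta_\xi(x) = q(r(\xi^*)x)$. Concretely, on the dense subspace $M_{m,k}(\R(G))$ one has for each $g \in G$, each coordinate $1 \le i \le m$, the identity $\Theta_\xi(x)(g)(i) = (r(\xi^*)x)(g)(i) + \Z = \pi_i(\Psi_{\rho(g^{-1})\xi}(x)) + \Z$ (after checking that evaluating the convolution $r(\xi^*)x$ at $g$ is the same as evaluating $r((\rho(g^{-1})\xi)^*)x$ at $1$ in coordinate $i$ — a routine unwinding of the formulas for $r$ and $\rho$). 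Since $\Meas((\R^k)^G, \nu^{\otimes G}, (\T^m)^G)$ carries the product topology coming from the countably many evaluation maps $x \mapsto x(g)(i)$ composed with the quotient $\R \to \T$, and since $q \colon \R \to \T$ is (uniformly) continuous, uniform continuity of $\xi \mapsto \Theta_\xi$ will follow from uniform continuity of each $\xi \mapsto \pi_i \circ \Psi_{\rho(g^{-1})\xi}$, which in turn follows from Proposition \ref{P:hs nonsense} together with the obvious uniform continuity of $\xi \mapsto \rho(g^{-1})\xi$ and of the coordinate projections $\pi_i$. Uniqueness of the extension is immediate from density of $M_{m,k}(\R(G))$ and the fact that the codomain is a metric space.

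For the three listed properties, the strategy is the same in each case: verify the property on the dense subspace $M_{m,k}(\R(G))$ by direct computation with the convolution formulas, then pass to the limit using continuity of the extension and continuity/equivariance of the relevant operations. For (\ref{I:additive homom}), additivity of $\xi \mapsto \Theta_\xi$ on $M_{m,k}(\R(G))$ is clear since $r(\xi^*)$ is linear in $\xi$ and $q$ is a homomorphism; then additivity extends because addition on $M_{m,k}(\ell^2(G,\R))$ is continuous and addition on $\Meas((\R^k)^G, \nu^{\otimes G}, (\T^m)^G)$ is continuous in the measure topology. For (\ref{I:left shift equiv}), left-shift equivariance on the finitely supported subspace is a standard check (convolution operators $r(\xi^*)$ commute with the left shift, and $q$ is left-shift equivariant by construction); equivariance then persists under limits since for fixed $g$ the maps $f \mapsto g \cdot f$ on $\Meas$ and on the target are continuous. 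For (\ref{I:right shift equiv}), the identity $\Theta_\xi \circ \rho(g) = \Theta_{\rho(g^{-1})\xi}$ is again first checked on $M_{m,k}(\R(G))$ from the definitions of $r$, $\rho$, and the convention $(\rho(g)A)_{ij} = \rho(g)(A_{ij})$; one then fixes $g$, lets $\xi_n \to \xi$ with $\xi_n \in M_{m,k}(\R(G))$, and uses that $\xi \mapsto \rho(g^{-1})\xi$ is continuous, $\xi \mapsto \Theta_\xi$ is continuous, and precomposition $f \mapsto f \circ \rho(g)$ is continuous on $\Meas((\R^k)^G, \nu^{\otimes G}, (\T^m)^G)$ (since $\rho(g)$ is a measure-preserving automorphism of $((\R^k)^G, \nu^{\otimes G})$).

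The one genuine technical point — the main obstacle such as it is — is the reduction of uniform continuity of $\xi \mapsto \Theta_\xi$ to Proposition \ref{P:hs nonsense}: one must be careful that the measure topology on $\Meas((\R^k)^G, \nu^{\otimes G}, (\T^m)^G)$ really is generated by the coordinate evaluations $x \mapsto q(x(g)(i))$ in the appropriate sense, i.e. that convergence in measure of a sequence of maps into $(\T^m)^G$ is equivalent to convergence in measure of each of the countably many $\T$-valued coordinate functions, and that the uniform-continuity criterion stated in the text (for each $\varepsilon$ a $\delta$ with $\mu(\{d(\Phi(z_1)(x), \Phi(z_2)(x)) > \varepsilon\}) < \varepsilon$) transfers through this product structure. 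This is the kind of "standard metric space exercise" the paper has already been invoking, so I would handle it by choosing the explicit product metric $d(a,b) = \sum_g \sum_i 2^{-\langle g,i\rangle} \min(1, d_\T(a(g)(i), b(g)(i)))$ on $(\T^m)^G$, observing that a tail of this sum is uniformly small, and reducing an $\varepsilon$-estimate in this metric to finitely many $\varepsilon'$-estimates in single $\T$-coordinates, each of which is controlled by continuity of $q$ composed with Proposition \ref{P:hs nonsense} applied to $\rho(g^{-1})\xi$. Everything else is bookkeeping. Finally, I would remark that once the extension exists, property (\ref{I:left shift equiv}) also identifies $(\Theta_\xi)_*(\nu^{\otimes G})$ as a $G$-invariant measure on $(\T^m)^G$, which is what makes this corollary usable in the later applications; but the computation of that pushforward is deferred to a subsequent result and is not needed here.
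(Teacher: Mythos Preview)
Your strategy is essentially the paper's own: bootstrap from Proposition \ref{P:hs nonsense}, reduce $\Theta_\xi$ coordinatewise to $\Psi_\xi$, and obtain properties (\ref{I:left shift equiv})--(\ref{I:additive homom}) by checking them on $M_{m,k}(\R(G))$ and passing to limits. The one slip is in your key identity: unwinding the formulas gives $(r(\xi^*)x)(g)=\Psi_{\lambda(g)\xi}(x)$ (equivalently $\Psi_\xi(g^{-1}x)$), not $\Psi_{\rho(g^{-1})\xi}(x)$; the shift you need is the \emph{left} shift on $\xi$, not the right shift. This does not affect your argument, since $\xi\mapsto\lambda(g)\xi$ is just as uniformly continuous as $\xi\mapsto\rho(g^{-1})\xi$, but it is worth correcting.

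The paper organizes the reduction slightly more cleanly by using the left shift on the \emph{domain} rather than on $\xi$: it defines the extension directly by $\Theta_\xi(x)(g)=\Psi_\xi(g^{-1}x)+\Z^m$, so that $\mathcal{E}_g\circ\Theta_\xi=q\circ\Psi_\xi\circ g^{-1}$ and uniform continuity of $\xi\mapsto\mathcal{E}_g\circ\Theta_\xi$ follows immediately from uniform continuity of $\xi\mapsto\Psi_\xi$ without composing with any shift on $\xi$. This has the side benefit that left-shift equivariance (\ref{I:left shift equiv}) is built into the very definition of the extension rather than being checked afterwards. Your route through shifting $\xi$ instead works equally well once the identity is corrected; it just trades one continuity check for another.
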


\begin{proof}
By Proposition \ref{P:hs nonsense}, we may extend $\xi\mapsto \Psi_{\xi}$ to a uniformly continuous map $M_{m,k}(\ell^{2}(G,\R))\to \Meas((\R^{k})^{G},\nu^{\otimes G},\R^{m}).$ For $\xi\in M_{m,k}(\ell^{2}(G,\R))$ we continue to use $\Psi_{\xi}$ for the image of $\xi$ under this map. Given $\xi\in M_{m,k}(\ell^{2}(G,\R))$ we then define $\Theta_{\xi}\in\Meas((\R^{k})^{G},\nu^{\otimes G},(\T^{m})^{G})$ by
\[(\Theta_{\xi})(x)(g)=\Psi_{\xi}(g^{-1}x)+\Z^{m}.\]
It is direct to check if $\xi\in M_{m,k}(\R(G))$ then this agrees with the previous definition of $\Theta_{\xi}.$ To show that $\xi\mapsto \Theta_{\xi}$ is uniformly continuous, it suffices to show that for every $g\in G,$ $\xi\mapsto \mathcal{E}_{g}\circ \Theta_{\xi}$ is uniformly continuous where $\mathcal{E}_{g}\colon (\T^{m})^{G}\to \T^{m}$ is given by $\mathcal{E}_{g}(\theta)=\theta(g).$ Since $\mathcal{E}_{g}\circ \Theta_{\xi}=q\circ \Psi_{\xi}\circ g^{-1},$ the uniform continuity of the map $\xi\mapsto \Psi_{\xi}$ clearly implies uniform continuity of the map $\xi\mapsto \mathcal{E}_{g}\circ \Theta_{\xi}.$ This proves existence of the uniformly continuous extension of $\xi\mapsto \Theta_{\xi}.$ The uniqueness is clear by density of $M_{m,k}(\R(G))$ in $M_{m,k}(\ell^{2}(G,\R)).$

Items (\ref{I:left shift equiv}), (\ref{I:right shift equiv}) are easy in the case that $\xi\in M_{m,k}(\R(G))$ the general case follows by continuity of $\xi\mapsto \Theta_{\xi}.$ The same method applies to showing (\ref{I:additive homom}).

\end{proof}

We will need to control the image of $\Theta_{\xi}.$ For this purpose, we introduce a natural subgroup  of $(\T^{m})^{G}$ for every $\zeta\in M_{k,m}(\R^{G}).$

\begin{defn}\label{D:subgroup associated to matrix}
Let $m\in \N\cup\{\infty\},k\in \N,$ and $\zeta\in M_{k,m}(\R^{G}).$ For $1\leq j\leq k,$ we define $\zeta_{j}\in (\R^{m})^{G}$ by
\[\zeta_{j}(g)(l)=\zeta_{jl}(g) \mbox{ for all $1\leq l\leq m,g\in G.$}\]
We let $X^{\zeta}$ be the smallest, closed, $G$-invariant subgroup of $(\T^{m})^{G}$ containing all $q(\zeta_{j})$ for $1\leq j\leq k.$
\end{defn}

The following lemma will be crucial in our control of both the image of $\Theta_{\xi},$ as well as to gain information about $(\Theta_{\xi})_{*}(\nu^{\otimes G})$ . It will follow by similar arguments as in \cite[Lemma 3.1]{MeWE}.

\begin{lem}\label{L:L2 product computation}
Let $k\in \N$ and let $f\colon \R^{k}\to \C$ which can be written as $f(t)=1+\ip{F(t)t,t}$ where $F\colon \R^{k}\to M_{k}(\C)$ is continuous. Let $J$ be a countable set. Then for every $\xi\in \ell^{2}(J,\R)^{\oplus k}$ the product
\[\prod_{j\in J}f(\xi(j))\]
converges absolutely, and the map
$\Phi\colon \ell^{2}(J,\R)^{\oplus k}\to \C$
given by
\[\Phi(\xi)=\prod_{j\in J}f(\xi(j))\]
is continuous if we give $\ell^{2}(J,\R)^{\oplus k}$ the $\|\cdot\|_{2}$-topology.

\end{lem}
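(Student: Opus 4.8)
The plan is to reduce the statement to two standard facts about infinite products: absolute convergence of $\prod(1+a_j)$ when $\sum|a_j|<\infty$, and the fact that a uniformly convergent product of continuous functions is continuous. First I would fix $\xi\in\ell^2(J,\R)^{\oplus k}$ and observe that since $\xi\in\ell^2(J,\R)^{\oplus k}$, in particular $\|\xi(j)\|_2\to 0$ as $j\to\infty$ along $J$ (using that $\sum_{j\in J}\|\xi(j)\|_2^2<\infty$), so all but finitely many $\xi(j)$ lie in a fixed compact neighborhood of $0$ in $\R^k$. On that neighborhood $F$ is bounded, say $\|F(t)\|\le M$, hence $|f(\xi(j))-1|=|\ip{F(\xi(j))\xi(j),\xi(j)}|\le M\|\xi(j)\|_2^2$. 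Since $\sum_j\|\xi(j)\|_2^2<\infty$, the series $\sum_j|f(\xi(j))-1|$ converges, which gives absolute convergence of $\prod_{j\in J}f(\xi(j))$ (one enumerates $J$; absolute convergence makes the value independent of the enumeration).

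For continuity, I would argue that $\Phi$ is a uniform limit, on $\|\cdot\|_2$-bounded sets, of the finite partial products $\Phi_S(\xi)=\prod_{j\in S}f(\xi(j))$ over finite $S\subseteq J$. Fix $R>0$ and restrict to $\{\xi:\|\xi\|_2\le R\}$. Only finitely many $j\in J$ can have $\sup_{\|\xi\|_2\le R}\|\xi(j)\|_2$ bounded below by a fixed constant — more precisely, for any $\eps>0$ there is a finite $S_0\subseteq J$ such that $j\notin S_0\implies \|\xi(j)\|_2^2\le\eps$ whenever $\|\xi\|_2\le R$; indeed $\|\xi(j)\|_2^2\le\|\xi\|_2^2-\sum_{i\in S_0'}\|\xi(i)\|_2^2$ can be made small uniformly by choosing $S_0'$ to exhaust enough of the bounded total mass — wait, that is not quite uniform. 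The cleaner route: for $\|\xi\|_2\le R$ we have $\sum_{j\notin S}\|\xi(j)\|_2^2\le R^2$ always, but to get a uniform tail estimate I instead bound the tail of the product directly: $\bigl|\prod_{j\notin S}f(\xi(j))-1\bigr|\le \exp\!\bigl(M\sum_{j\notin S}\|\xi(j)\|_2^2\bigr)-1$ only controls things once $\sum_{j\notin S}\|\xi(j)\|_2^2$ is small, which need not hold uniformly in $S$ of bounded size. So the correct statement is: $\Phi$ is continuous \emph{at each point} $\xi^{(0)}$. Given $\xi^{(0)}$, pick finite $S$ with $\sum_{j\notin S}\|\xi^{(0)}(j)\|_2^2<\delta$; then for $\xi$ near $\xi^{(0)}$ in $\|\cdot\|_2$ we still have $\sum_{j\notin S}\|\xi(j)\|_2^2<2\delta$, and (since all but finitely many such $\xi(j)$ are small) the tail factor $\prod_{j\notin S}f(\xi(j))$ is within a controlled amount of $\prod_{j\notin S}f(\xi^{(0)}(j))$, while the finite head $\prod_{j\in S}f(\xi(j))$ depends continuously on $\xi$ because each $f$ is continuous and $\xi\mapsto\xi(j)$ is $\|\cdot\|_2$-continuous.

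Concretely, I would establish a single quantitative lemma: if $a=(a_j)$, $b=(b_j)$ are finite or summable sequences of complex numbers with $|a_j|,|b_j|\le 1/2$ and $\sum|a_j|,\sum|b_j|\le C$, then $\bigl|\prod(1+a_j)-\prod(1+b_j)\bigr|\le e^{C}\sum|a_j-b_j|$ — this is the standard telescoping estimate $\prod(1+a_j)-\prod(1+b_j)=\sum_n\bigl(\prod_{j<n}(1+a_j)\bigr)(a_n-b_n)\bigl(\prod_{j>n}(1+b_j)\bigr)$ together with $\prod(1+|a_j|)\le e^{\sum|a_j|}$. Applying this with $a_j=f(\xi(j))-1$, $b_j=f(\xi^{(0)}(j))-1$ — legitimate once we throw the finitely many non-small indices into a separately-handled finite head and restrict to a $\|\cdot\|_2$-ball so that $\sum|a_j-b_j|\le M\sum(\|\xi(j)\|_2+\|\xi^{(0)}(j)\|_2)\|\xi(j)-\xi^{(0)}(j)\|_2\le M'\|\xi-\xi^{(0)}\|_2$ by Cauchy--Schwarz — gives the continuity.

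\textbf{Main obstacle.} The delicate point is the non-uniformity I stumbled on above: the product does \emph{not} converge uniformly on $\|\cdot\|_2$-bounded sets (the tail $\sum_{j\notin S}\|\xi(j)\|_2^2$ is not small uniformly over a ball for fixed finite $S$), so continuity has to be proved pointwise, splitting $J=S\sqcup(J\setminus S)$ with $S$ depending on the base point $\xi^{(0)}$ and chosen large enough that both the $\xi^{(0)}$-tail and the near-$\xi^{(0)}$-tail have small $\ell^2$-mass. Getting the bookkeeping right there — in particular making sure the finitely many ``large'' coordinates are always inside the head $S$ and that the head product's continuity and the tail product's smallness can be combined via the telescoping estimate — is where the real work lies; the convergence claim itself is immediate from $\sum\|\xi(j)\|_2^2<\infty$ and local boundedness of $F$. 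I expect this mirrors \cite[Lemma 3.1]{MeWE} closely, so I would cite that argument for the routine parts and only spell out the adaptation needed because here $f$ has the specific form $1+\ip{F(t)t,t}$ rather than whatever normalization appears there.
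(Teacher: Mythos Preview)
Your overall strategy is the same as the paper's: for absolute convergence you use $|f(t)-1|\le M\|t\|_2^2$ on a neighborhood of $0$ together with $\sum_j\|\xi(j)\|_2^2<\infty$; for continuity you fix a base point, split $J$ into a finite ``head'' $S$ (where the coordinates may be large) and a tail $J\setminus S$ (where all coordinates are small), handle the head as a finite product of continuous functions, and control the tail product difference. The paper does exactly this, the only cosmetic difference being that it takes logarithms of the tail factors and works additively, whereas you use the telescoping product estimate $\bigl|\prod(1+a_j)-\prod(1+b_j)\bigr|\le e^{C}\sum|a_j-b_j|$. Both devices are equivalent here.

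There is, however, a genuine gap in your final estimate. You claim
\[
|a_j-b_j|=\bigl|\ip{F(\xi(j))\xi(j),\xi(j)}-\ip{F(\xi^{(0)}(j))\xi^{(0)}(j),\xi^{(0)}(j)}\bigr|\le M\bigl(\|\xi(j)\|_2+\|\xi^{(0)}(j)\|_2\bigr)\|\xi(j)-\xi^{(0)}(j)\|_2,
\]
but expanding the left side produces an additional term $\bigl|\ip{(F(\xi(j))-F(\xi^{(0)}(j)))\xi(j),\xi(j)}\bigr|$, and since $F$ is merely continuous (not Lipschitz) this term cannot be bounded by a constant times $\|\xi(j)-\xi^{(0)}(j)\|_2$. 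The paper meets precisely this issue and handles it by dominated convergence: on the tail one has the uniform bound $\|F(\xi(j))-F(\xi^{(0)}(j))\|\,\|\xi(j)\|_2^2\le 2B\|\xi(j)\|_2^2$, which is summable in $j$, and the summand tends to $0$ for each fixed $j$ as $\xi\to\xi^{(0)}$; hence $\sum_{j\notin S}\|F(\xi(j))-F(\xi^{(0)}(j))\|\,\|\xi(j)\|_2^2\to 0$. Insert this DCT step alongside your Cauchy--Schwarz bound for the other term and your telescoping argument goes through. (Alternatively, your earlier remark that both tail products are close to $1$ because both tail $\ell^2$-masses are $<2\delta$ already suffices, without the sharper $\sum|a_j-b_j|$ bound; you abandoned that route prematurely.)
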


\begin{proof}
We first show that the product defining $\Phi$ converges absolutely. Fix $\xi\in \ell^{2}(J,\R)^{\oplus k}.$ Then $C=\sup_{j\in J}\|F(\xi(j))\|<\infty,$ so:
\[\sum_{j\in J}|1-f(\xi(j))|=\sum_{j\in J}|\ip{F(\xi(j))\xi(j),\xi(j)}|\leq C\sum_{j\in J}\|\xi(j)\|_{2}^{2}<\infty.\]
By a well known criterion, this implies that the product defining $\Phi$ converges absolutely.

It only requires a mildly more sophisticated argument to show that $\Phi$ is continuous. Fix a $\xi\in \ell^{2}(J,\R)^{\oplus k},$ and let $\xi_{n}$ be a sequence of vectors with $\|\xi-\xi_{n}\|_{2}\to 0.$ Let $\log$ be the analytic branch of the logarithm defined in $\C\setminus (-\infty,0]$ which has $\log(1)=0.$ We may choose a constant $A>0$ so that
\[|\log(z)-\log(w)|\leq A|z-w|\]
if $|z-1|,|w-1|<1/2.$ We may also choose a $\delta\in (0,1)$ so that $|f(t)-1|<1/2$ if $\|t\|_{2}<\delta.$ Let
$B=\sup_{\|t\|_{2}<\delta}\|F(t)\|.$
Let
\[E=\left\{j\in J:\|\xi(j)\|_{2}\geq \frac{\delta}{2}\right\}.\]
Choose $N$ large enough so that $\|\xi-\xi_{n}\|_{2}<\delta/2$ for all $n\geq N.$ Fix an $n\geq N.$ Then for any $j\in J\setminus E,$ we have:
\begin{align*}
    |\log(f(\xi(j)))-\log(f(\xi_{n}(j)))|\leq A|f(\xi_{j})-f(\xi_{n}(j))|&=A|\ip{F(\xi(j))\xi(j),\xi(j)}-\ip{F(\xi_{n}(j))\xi_{n}(j),\xi_{n}(j)}|\\
    &\leq A\|\xi(j)\|_{2}^{2}\|F(\xi_{n}(j))-F(\xi(j))\|\\
    &+A\left|\ip{F(\xi_{n}(j))\xi(j),\xi(j)}-\ip{F(\xi_{n}(j))\xi_{n}(j),\xi_{n}(j)}\right|.
\end{align*}
We have that
\begin{align*}
    \left|\ip{F(\xi_{n}(j))\xi(j),\xi(j)}-\ip{F(\xi_{n}(j))\xi_{n}(j),\xi_{n}(j)}\right|&\leq |\ip{F(\xi_{n}(j))(\xi(j)-\xi_{n}(j)),\xi(j)}|+|\ip{F(\xi_{n}(j))\xi_{n}(j),\xi_{n}(j)-\xi(j)}|\\
    &\leq B\|\xi(j)-\xi_{n}(j)\|_{2}(\|\xi(j)\|_{2}+\|\xi_{n}(j)\|_{2}).
\end{align*}
Hence for $n\geq N,$
\begin{align*}
    \sum_{j\in J\setminus E}|\log(f(\xi(j)))-\log(f(\xi_{n}(j)))|\leq AB\|\xi-\xi_{n}\|_{2}(\|\xi\|_{2}+\|\xi_{n}\|_{2})+A\sum_{j\in J\setminus E}\|\xi(j)\|_{2}^{2}\|F(\xi_{n}(j))-F(\xi(j))\|.
\end{align*}
The first term on the right-hand side of this inequality goes to zero as $n\to\infty,$ since $\|\xi-\xi_{n}\|_{2}\to 0.$ For the second term, observe that for $j\in J\setminus E$ we have that $\|F(\xi_{n}(j))-F(\xi(j))\|\to_{n\to\infty}0,$ and
$\|\xi(j)\|_{2}^{2}\|F(\xi_{n}(j))-F(\xi(j))\|\leq 2B\|\xi(j)\|_{2}^{2}$ for $n\geq N,j\in J\setminus E.$ Since $\xi\in \ell^{2}(J,\R)^{\oplus k},$ the dominated convergence theorem implies that
\[\sum_{j\in J\setminus E}\|\xi(j)\|_{2}^{2}\|F(\xi_{n}(j))-F(\xi(j))\|\to_{n\to\infty}0.\]
Hence,
    \[\lim_{n\to\infty}\sum_{j\in J\setminus E}\log f(\xi_{n}(j))=\sum_{j\in J\setminus E}\log f(\xi(j)).\]
Exponentiating,
\[\lim_{n\to\infty}\prod_{j\in J\setminus E}f(\xi_{n}(j))=\prod_{j\in J\setminus E}f(\xi(j)).\]
Since $E$ is finite,
\[\lim_{n\to\infty}\prod_{j\in E}f(\xi_{n}(j))=\prod_{j\in E}f(\xi(j)).\]
Thus we see that
\[\lim_{n\to\infty}\prod_{j\in J}f(\xi_{n}(j))=\prod_{j\in J}f(\xi(j)),\]
and this shows that $\Phi$ is continuous.
\end{proof}

To control the image of $\Theta_{\xi},$ we will also use the following fact. If $Y$ is a locally compact, abelian group, we use $\widehat{Y}$ for the group of continuous homomorphisms $\chi\colon Y\to \T.$ The group $\widehat{Y}$ is called the \emph{Pontryagin dual of $Y.$} If  $X\leq Y$ we let $X^{o}=\{\alpha\in \widehat{Y}:\alpha(x)=0\mbox{ for all $x\in X$}\}.$ It is a consequence of Pontryagin duality that
\[X=(X^{o})^{o},\]
(see \cite[Lemma 2.1.3]{Rud}).

\begin{prop}\label{P:annhilator computation}
Let $G$ be  countable discrete group, $k\in \N,m\in \N\cup\{\infty\}.$
For $\xi\in M_{k,m}(\ell^{2}(G,\R))$ we have that
\[(X^{\xi})^{o}=\{\alpha\in \Z(G)^{\oplus m}:r(\xi^{*})\alpha\in \Z(G)^{\oplus k}\}.\]
\end{prop}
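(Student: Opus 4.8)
The plan is to evaluate both sides via Pontryagin duality: I would reduce the annihilator $(X^{\xi})^{o}$ to a countable family of pairing conditions, recognize those conditions as saying that $r(\xi^{*})\alpha$ is integer-valued, and then use square-summability to upgrade ``integer-valued'' to ``finitely supported''. Recall that $(\T^{m})^{G}$ is a compact abelian group whose Pontryagin dual is canonically identified with $\Z(G)^{\oplus m}$, where $\alpha$ acts as the continuous character $x\mapsto\sum_{g\in G}\sum_{l=1}^{m}\alpha(g)(l)x(g)(l)\in\R/\Z$ (a finite sum, since $\alpha$ has finite support). In particular $(X^{\xi})^{o}\subseteq\Z(G)^{\oplus m}$ automatically, so it suffices to prove that a fixed $\alpha\in\Z(G)^{\oplus m}$ annihilates $X^{\xi}$ if and only if $r(\xi^{*})\alpha\in\Z(G)^{\oplus k}$. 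By Definition \ref{D:subgroup associated to matrix}, $X^{\xi}$ is the closure of the subgroup generated by $\{g\cdot q(\xi_{j}):g\in G,\ 1\le j\le k\}$; since characters are continuous homomorphisms, $\alpha\in(X^{\xi})^{o}$ if and only if its character vanishes at every $g\cdot q(\xi_{j})$.

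Next I would carry out the pairing computation. Since $q$ is equivariant for the left shift actions, $g\cdot q(\xi_{j})=q(g\xi_{j})$ with $(g\xi_{j})(h)(l)=\xi_{jl}(g^{-1}h)$, so pairing against $\alpha$ gives $\sum_{h\in G}\sum_{l=1}^{m}\alpha(h)(l)\xi_{jl}(g^{-1}h)$ modulo $\Z$. On the other hand, unwinding the definition of $r$ applied to $\xi^{*}\in M_{m,k}(\C^{G})$ and $\alpha\in\C(G)^{\oplus m}$ (one of the extensions of $r$ recorded in Section \ref{S:extend convolve}), together with the identity $(\xi^{*})_{lj}(x)=\xi_{jl}(x^{-1})$, gives exactly
\[(r(\xi^{*})\alpha)(g)(j)=\sum_{l=1}^{m}\sum_{h\in G}\alpha(h)(l)\,\xi_{jl}(g^{-1}h),\qquad 1\le j\le k,\ g\in G.\]
These sums are finite because $\alpha$ has finite support, so there is no convergence issue. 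Comparing the two expressions, $\alpha\in(X^{\xi})^{o}$ if and only if $(r(\xi^{*})\alpha)(g)(j)\in\Z$ for all $g\in G$ and all $1\le j\le k$, i.e.\ if and only if $r(\xi^{*})\alpha$ is integer-valued.

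Finally I would upgrade ``integer-valued'' to ``lies in $\Z(G)^{\oplus k}$''. One direction is immediate since elements of $\Z(G)^{\oplus k}$ are integer-valued. For the converse, note that for $\alpha\in\Z(G)^{\oplus m}$ only finitely many pairs $(h,l)$ have $\alpha(h)(l)\ne 0$, and for each fixed $h$ the function $g\mapsto\xi_{jl}(g^{-1}h)$ has the same $\ell^{2}$-norm as $\xi_{jl}\in\ell^{2}(G,\R)$; hence each of the $k$ coordinates of $r(\xi^{*})\alpha$ is a finite $\R$-linear combination of $\ell^{2}(G)$-functions and so lies in $\ell^{2}(G)$. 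A $\Z$-valued element of $\ell^{2}(G)$ must be finitely supported, since every nonzero value contributes at least $1$ to the square-sum; therefore $r(\xi^{*})\alpha\in\Z(G)^{\oplus k}$, which finishes the proof. I expect this last step to be the crux: square-summability of $\xi$ is precisely what makes the condition $r(\xi^{*})\alpha\in\Z(G)^{\oplus k}$ equivalent to the purely measure-theoretic annihilation condition $\alpha\in(X^{\xi})^{o}$, while everything else is bookkeeping with the definitions of $r$, $q$, and the duality pairing.
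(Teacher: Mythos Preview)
Your proof is correct and follows essentially the same approach as the paper: both reduce the annihilator condition to the integrality of the pairings $\langle\alpha,g\xi_{j}\rangle$ and identify these with the values $(r(\xi^{*})\alpha)(g)(j)$. You are in fact slightly more careful than the paper, which passes directly from ``integer-valued'' to ``lies in $\Z(G)^{\oplus k}$'' without spelling out the $\ell^{2}$ argument you give for finite support; that extra step is a welcome clarification.
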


\begin{proof}
Define $\xi_{1},\cdots,\xi_{k}$ as in Definition \ref{D:subgroup associated to matrix}.
First suppose that $\alpha\in \Z(G)^{\oplus m}$ and that $r(\xi^{*})\alpha\in \Z(G)^{\oplus k}.$ Then for every $1\leq j\leq k,g\in G$ we have, by a direct computation, that $\ip{\alpha,g\xi_{j}}=(r(\xi_{j}^{*})\alpha)(g).$ It is also easily seen that $(r(\xi^{*})\alpha)(g)(j)=(r(\xi^{*}_{j})\alpha)(g)$ for $g\in G.$ So,  we have for all  $1\leq j\leq k,g\in G$ that
\[\ip{\alpha,g\xi_{j}}=(r(\xi_{j}^{*})\alpha)(g)=(r(\xi^{*})\alpha)(g)(j)\in \Z.\]
So $\ip{\alpha,g\xi_{j}}\in \Z$ for all $1\leq j\leq k,g\in G.$ Since the group generated by $\{gq(\xi_{j}):g\in G,1\leq j\leq k\}$ is dense in $X^{\xi},$ this implies that $\alpha\in (X^{\xi})^{o}.$

Conversely, suppose that $\alpha\in (X^{\xi})^{o}.$ Then for every $1\leq j\leq k, g\in G$ we have that
\[(r(\xi^{*})\alpha)(g)(j)=(r(\xi_{j}^{*})\alpha)(g)=\ip{\alpha,g\xi_{j}}\in\Z.\]
Thus $r(\xi^{*})\alpha\in \Z(G)^{\oplus k}.$

\end{proof}

We now close with two more properties of the map $\xi\mapsto \Theta_{\xi}$ that will be crucial for us, one of which gives us good control over the image of $\Theta_{\xi}$ that we alluded to earlier. Given $\mu\in \Prob(Y),$ we define its \emph{Fourier transform} $\widehat{\mu}\colon \widehat{Y}\to \C$ by
\[\widehat{\mu}(\chi)=\int_{Y}\exp(2\pi i \chi(y))\,d\mu(y).\]

We identify $\widehat{\R}$ with $\R$ by the duality $\ip{t,s}_{\T}=ts+\Z.$ Given a countable, discrete group $G,$ and $m\in \N\cup\{\infty\}$ we identify  the Pontryagin dual of $(\T^{m})^{G}$ with $\Z(G)^{\oplus m}$ via the duality
\[\ip{\theta,\alpha}_{\T}=\sum_{l=1}^{m}\sum_{g\in G}\alpha(g)(l)\theta(g)(l).\]
Since $\alpha$ is finitely supported, this is in fact a finite sum.

\begin{thm}\label{T:pushforward props}
Let $G$ be a countable, discrete group, $k\in \N,m\in \N\cup\{\infty\}$ and $\nu\in \Prob(\R^{k}).$ Suppose that $\nu$ has mean zero and a finite second moment.
\begin{enumerate}[(i)]
\item If $\xi\in M_{m,k}(\ell^{2}(G,\R)),$ and we set $\mu_{\xi}=(\Theta_{\xi})_{*}(\nu^{\otimes G}),$ then \label{I:Fourier transform formula equiv}
\[\widehat{\mu}_{\xi}(\alpha)=\prod_{g\in G}\widehat{\nu}((r(\xi)\alpha)(g))\]
for all $\alpha\in \Z(G)^{\oplus m}.$
\item If $\xi\in M_{m,k}(\ell^{2}(G,\R)),$ and $\nu$ is supported on $\Z^{k},$ then $\Theta_{\xi}$ is almost surely valued in $X^{\xi^{*}}$. \label{I:controlling the image of conv}
\end{enumerate}
\end{thm}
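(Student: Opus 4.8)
\textbf{Proof plan for Theorem \ref{T:pushforward props}.}

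For part (\ref{I:Fourier transform formula equiv}), the plan is to first verify the identity for $\xi \in M_{m,k}(\R(G))$ by a direct computation, and then extend to all $\xi \in M_{m,k}(\ell^2(G,\R))$ by continuity. For finitely supported $\xi$, write $\Theta_\xi(x)(g) = \Psi_\xi(g^{-1}x) + \Z^m = (r(\xi^*)x)(g^{-1}\cdot) \ldots$; more precisely, unwinding the definitions of $\Theta_\xi$, the duality pairing $\ip{\cdot,\cdot}_\T$, and the adjoint relation $\ip{r(\xi^*)x, \alpha} = \ip{x, \lambda(\xi^*)^*\alpha}$-type identity, one finds that $\ip{\Theta_\xi(x), \alpha}_\T$ is (mod $\Z$) a finite linear combination $\sum_{g} \ip{x(g), (r(\xi)\alpha)(g)}$ of the coordinates of $x$, so that
\[
\widehat{\mu}_\xi(\alpha) = \int \exp\!\left(2\pi i \ip{\Theta_\xi(x),\alpha}_\T\right) d\nu^{\otimes G}(x) = \prod_{g\in G} \int_{\R^k} \exp\!\left(2\pi i \ip{y, (r(\xi)\alpha)(g)}\right) d\nu(y) = \prod_{g\in G}\widehat{\nu}((r(\xi)\alpha)(g)),
\]
using independence of the coordinates under $\nu^{\otimes G}$; only finitely many factors differ from $1$ since $r(\xi)\alpha$ is finitely supported when $\xi,\alpha$ are. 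To pass to general $\xi \in M_{m,k}(\ell^2(G,\R))$, fix $\alpha \in \Z(G)^{\oplus m}$ and take $\xi^{(n)} \in M_{m,k}(\R(G))$ with $\xi^{(n)} \to \xi$ in $\|\cdot\|_2$. By Corollary \ref{C:meas extens}, $\Theta_{\xi^{(n)}} \to \Theta_\xi$ in measure with respect to $\nu^{\otimes G}$, hence $\mu_{\xi^{(n)}} \to \mu_\xi$ weak${}^*$, so $\widehat{\mu}_{\xi^{(n)}}(\alpha) \to \widehat{\mu}_\xi(\alpha)$. On the other side, $r(\xi^{(n)})\alpha \to r(\xi)\alpha$ in $\ell^2(G,\R)^{\oplus k}$ (the map $\beta \mapsto r(\beta)\alpha$ is continuous for fixed finitely supported $\alpha$ — it is essentially a finite sum of translates), and applying Lemma \ref{L:L2 product computation} with $J = G$ and $f = \widehat{\nu}$ — which has the required form $f(t) = 1 + \ip{F(t)t,t}$ with $F$ continuous, precisely because $\nu$ has mean zero and finite second moment so that $\widehat{\nu}$ is $C^2$ with $\widehat{\nu}(0)=1$, $\widehat{\nu}'(0)=0$ — gives $\prod_g \widehat{\nu}((r(\xi^{(n)})\alpha)(g)) \to \prod_g \widehat{\nu}((r(\xi)\alpha)(g))$. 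Matching the two limits yields the formula.

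For part (\ref{I:controlling the image of conv}), the plan is to show, via Pontryagin duality, that $(\Theta_{\xi})_*(\nu^{\otimes G})$ annihilates every $\alpha \in (X^{\xi^*})^o$, from which it follows that $\mu_{\xi}$ is supported on $(X^{\xi^*})^{oo} = X^{\xi^*}$, i.e. $\Theta_\xi$ is a.s.\ valued there. By Proposition \ref{P:annhilator computation} applied to $\xi^* \in M_{m,k}(\ell^2(G,\R))$ (note $r((\xi^*)^*) = r(\xi)$), we have $(X^{\xi^*})^o = \{\alpha \in \Z(G)^{\oplus m} : r(\xi)\alpha \in \Z(G)^{\oplus k}\}$. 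So fix such an $\alpha$; then $(r(\xi)\alpha)(g) \in \Z^k$ for every $g$, and since $\nu$ is supported on $\Z^k$ we get $\widehat{\nu}((r(\xi)\alpha)(g)) = \int_{\Z^k}\exp(2\pi i \ip{y, (r(\xi)\alpha)(g)})\,d\nu(y) = 1$ because the exponent is an integer multiple of $2\pi i$ $\nu$-a.e. By part (\ref{I:Fourier transform formula equiv}), $\widehat{\mu}_\xi(\alpha) = \prod_g 1 = 1 = \widehat{\mu}_\xi(0)$. Since a Borel probability measure on a compact abelian group is determined by its Fourier transform, and $\widehat{\mu}_\xi$ agrees with $\widehat{\delta_0}$ on the subgroup $(X^{\xi^*})^o$, a standard argument shows $\mu_\xi$ is supported on the annihilator of $(X^{\xi^*})^o$, which is $X^{\xi^*}$ by the duality identity $X = (X^o)^o$ cited before Proposition \ref{P:annhilator computation}. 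More directly: for any $\alpha \in (X^{\xi^*})^o$, $\int |\ip{\theta,\alpha}_\T - 0|$-type considerations (or: $\int \exp(2\pi i \ip{\theta,\alpha}_\T)\,d\mu_\xi(\theta) = 1$ forces $\ip{\theta,\alpha}_\T = 0$ for $\mu_\xi$-a.e.\ $\theta$) show $\mu_\xi(\{\theta : \alpha(\theta) = 0\}) = 1$; intersecting over a countable generating set of $(X^{\xi^*})^o$ gives $\mu_\xi(X^{\xi^*}) = \mu_\xi((X^{\xi^*})^{oo}) = 1$.

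The main obstacle I anticipate is the bookkeeping in part (\ref{I:Fourier transform formula equiv}) for finitely supported $\xi$: correctly tracking the indices $1 \le j \le k$, $1 \le l \le m$, the inverses arising from the left-shift convention in $\Theta_\xi(x)(g) = \Psi_\xi(g^{-1}x) + \Z^m$, and the adjoint/transpose relationship between $r(\xi)$ and $r(\xi^*)$, so that the pairing $\ip{\Theta_\xi(x),\alpha}_\T$ comes out to exactly $\sum_g \ip{x(g), (r(\xi)\alpha)(g)} \pmod \Z$ with $r(\xi)$ (not $r(\xi^*)$ or some shifted variant) appearing. This is the computation that $\ip{r(\xi^*)x,\alpha}_{\T\text{-pairing on }(\T^k)^G} = \ip{x, r(\xi)\alpha}$ in the appropriate sense, which is a formal adjointness statement but is easy to get off by a reflection $g \mapsto g^{-1}$. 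Once that identity is pinned down for finitely supported data, everything else is a continuity argument (part (i)) or a routine duality argument (part (ii)), with Lemma \ref{L:L2 product computation} doing the real analytic work of justifying the passage to the $\ell^2$ limit inside the infinite product.
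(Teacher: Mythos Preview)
Your proposal is correct and follows essentially the same approach as the paper's proof: verify the Fourier-transform identity for finitely supported $\xi$ by direct computation, then extend by continuity using Corollary~\ref{C:meas extens} on the left-hand side and Lemma~\ref{L:L2 product computation} (together with the $C^{2}$ expansion of $\widehat{\nu}$ at $0$) on the right-hand side; for part~(\ref{I:controlling the image of conv}), use Proposition~\ref{P:annhilator computation} (with $\xi$ replaced by $\xi^{*}$) and the fact that $\widehat{\nu}\equiv 1$ on $\Z^{k}$ to conclude $\widehat{\mu}_{\xi}(\alpha)=1$ for all $\alpha\in (X^{\xi^{*}})^{o}$. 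The paper's write-up is terser --- it leaves the finitely-supported computation as ``by a direct computation'' and the support conclusion as ``it suffices to show'' --- but the logical skeleton is identical to yours.
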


\begin{proof}

(\ref{I:Fourier transform formula equiv}): By Corollary \ref{C:meas extens}, the map $M_{m,k}(\ell^{2}(G,\R))\to \C^{\Z(G)^{\oplus m}}$ given by $\xi\mapsto \widehat{\mu}_{\xi}$ is continuous. Since $\nu$ has mean zero and finite second moments, we know that $\widehat{\nu}(0)=1,(\nabla\widehat{\nu})(0)=0,$ and that $\widehat{\nu}$ is $C^{2}.$ Thus by Taylor's theorem with remainder, we know that $\widehat{\nu}(t)=1+\ip{F(t)t,t}$ for some continuous $F\colon \R^{k}\to M_{k}(\C).$ So by Lemma \ref{L:L2 product computation} we also have that the map $M_{m,k}(\ell^{2}(G,\R))\to \C^{\Z(G)^{\oplus m}}$ given by
\[\xi\mapsto \left(\alpha\mapsto \prod_{g\in G}\widehat{\nu}((r(\xi)\alpha)(g))\right)\]
is continuous.
By a direct computation, $\xi\mapsto \widehat{\mu}_{\xi}$ and
\[\xi\mapsto \left(\alpha\mapsto \prod_{g\in G\\ }\widehat{\nu}((r(\xi)\alpha)(g))\right)\]
agree on $M_{m,k}(c_{c}(G,\R)).$ As both these maps are continuous they must  be equal by density of $M_{m,k}(c_{c}(G,\R))$ inside of $M_{m,k}(\ell^{2}(G,\R)).$

(\ref{I:controlling the image of conv}): It suffices to show that $\mu_{\xi}$ is supported on $X^{\xi^{*}}.$ To show this, it suffices to show that if $\alpha\in (X^{\xi^{*}})^{o},$ then $\widehat{\mu}_{\xi}(\alpha)=1.$
If $\alpha\in (X^{\xi^{*}})^{o},$ then by Proposition \ref{P:annhilator computation} we have $r(\xi)\alpha\in \Z(G)^{\oplus k}.$ Since $\nu\in \Prob(\Z^{k}),$ we have that $\widehat{\nu}(\Z^{k})=\{1\}.$ So
\[\widehat{\mu}_{\xi}(\alpha)=\prod_{g\in G}\widehat{\nu}((r(\xi)\alpha)(g))=1.\]

\end{proof}

We now show that Corollary \ref{C:meas extens} is in some sense optimal, and that one cannot hope to extend $\Theta_{\xi}$ to the case of $\xi\in M_{m,k}(\ell^{p}(G,\R)).$
For $1\leq p\leq \infty,$ and $m,k\in \N\cup\{\infty\}$ we can give $M_{m,k}(\ell^{p}(G,\R))$ the product topology: a sequence $\xi^{(n)}\in M_{m,k}(\ell^{p}(G,\R))$ converges if and only if
\[\|\xi^{(n)}_{ij}-\xi_{ij}\|_{p}\to_{n\to\infty}0\mbox{ for all $1\leq i\leq m,1\leq j\leq k$}.\]
If both $m,k$ are finite, this topology is given by the norm
\[\|\xi\|_{p}=\left(\sum_{i,j}\|\xi_{ij}\|_{p}^{p}\right)^{1/p}\mbox{ for $1\leq p<\infty$}\]
\[\|\xi\|_{\infty}=\max_{i,j}\|\xi_{ij}\|_{\infty}.\]
Recall that we are identifying $M_{m,1}(\R^{G})$ with $(\R^{G})^{m}.$ So when $m$ is finite, this allows us to consider the norm $\|\cdot\|_{p}$ on $\ell^{p}(G,\R)^{\oplus m}.$

\begin{prop}\label{P:cant extend}
Let $G$ be a countably infinite group, and $k\in \N,m\in \N\cup\{\infty\}.$ Suppose that $\nu\in \Prob(\R^{k})$ and that $\nu$ is not the dirac mass at $0.$ Fix a $p\in (2,\infty).$
\begin{enumerate}[(i)]
\item There is no extension of the map  $M_{m,k}(\R(G))\to \Meas((\R^{k})^{G},\nu^{\otimes G},(\T^{m})^{G}),$ $\xi\mapsto \Theta_{\xi}$ to a continuous map $M_{m,k}(\ell^{p}(G,\R))\to \Meas((\R^{k})^{G},\nu^{\otimes G},(\T^{m})^{G}).$ \label{I:cant extend map}
\item There is no  extension of the map  $M_{m,k}(\R(G))\to \Prob((\T^{m})^{G}),$ $\xi\mapsto (\Theta_{\xi})_{*}(\nu^{\otimes G})$ to a continuous map $M_{m,k}(\ell^{p}(G,\R))\to \Prob((\T^{m})^{G}).$ \label{I:cant probability extend}
\end{enumerate}

\end{prop}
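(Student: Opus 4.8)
The plan is to prove the contrapositive: if such a continuous extension existed, we could build a sequence of finitely supported $\xi^{(n)}$ converging in $\ell^p$ to some $\xi$ for which $\Theta_\xi$ (or its pushforward) would have to be discontinuous, contradicting continuity. The central computational input is Theorem \ref{T:pushforward props}(\ref{I:Fourier transform formula equiv}), which gives $\widehat{\mu}_{\xi}(\alpha)=\prod_{g\in G}\widehat{\nu}((r(\xi)\alpha)(g))$; I want to exploit the fact (recalled in the excerpt via the formula $\rea\!\left(\frac{1-\widehat{\nu}(t)}{t^2}\right)=2\int(\sin(\pi tx)/t)^2\,d\nu(x)$) that $1-\widehat\nu(t)$ decays \emph{no faster} than $t^2$ as $t\to 0$ when $\nu\ne\delta_0$. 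Concretely, since $\nu$ is not the dirac mass at $0$, there is a constant $c>0$ and a sequence $t_j\to 0$ with $|1-\widehat\nu(t_j)|\geq c\,t_j^2$ (indeed one can take any null sequence, but it suffices to have infinitely many such $t_j$).

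First I would reduce to the case $m=k=1$: the projections $\Pi_l$ and $\pi_j$ used in Corollary \ref{C:meas extens} are uniformly continuous, so a continuous extension on $M_{m,k}(\ell^p(G,\R))$ would restrict to one on each one-dimensional slot $M_{1,1}(\ell^p(G,\R))=\ell^p(G,\R)$, and a counterexample there (placed in a single matrix entry) pulls back to a counterexample in general. So assume $\xi\in\ell^p(G,\R)$. Next I would pick an enumeration $g_1,g_2,\dots$ of $G$ and construct $\xi$ supported on this enumeration with $\xi(g_n)=a_n$ where $(a_n)$ is chosen so that $\sum_n |a_n|^p<\infty$ but $\sum_n |a_n|^2=\infty$ — possible exactly because $p>2$ (e.g. $a_n \sim n^{-1/2}(\log n)^{-1}$ type choices, or more simply block constructions). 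Let $\xi^{(N)}$ be the truncation to $\{g_1,\dots,g_N\}$; then $\xi^{(N)}\to\xi$ in $\ell^p$. For part (\ref{I:cant probability extend}), I would compute $\widehat{\mu}_{\xi^{(N)}}(\alpha)$ for a well-chosen $\alpha\in\Z(G)^{\oplus 1}=\Z(G)$ — say $\alpha=\delta_e$ or a suitable group-ring element engineered so that $(r(\xi^{(N)})\alpha)(g)$ runs over (scalar multiples of) the $a_n$'s — and show that $\prod_g \widehat\nu((r(\xi^{(N)})\alpha)(g))$ fails to converge, or converges to $0$, because $\sum_n |1-\widehat\nu(\text{const}\cdot a_n)| \gtrsim \sum_n a_n^2 = \infty$ forces the infinite product to diverge to $0$. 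But $\widehat{\mu}_{\xi}(\alpha)$, being the Fourier coefficient of a genuine probability measure, must be the limit of $\widehat{\mu}_{\xi^{(N)}}(\alpha)$ under the assumed weak$^*$-continuity of $\xi\mapsto(\Theta_\xi)_*(\nu^{\otimes G})$ — and the limit of nonconvergent/zero quantities cannot equal a valid Fourier transform value, or more precisely $\widehat{\mu}_\xi(\alpha)=0$ combined with a parallel computation for a different $\alpha$ (or scaling $\xi$) yields a contradiction with $\widehat{\mu}_\xi(0)=1$ and continuity at $\xi=0$. Then (\ref{I:cant extend map}) follows from (\ref{I:cant probability extend}) since $\xi\mapsto\Theta_\xi$ continuous into $\Meas$ would make $\xi\mapsto(\Theta_\xi)_*(\nu^{\otimes G})$ continuous into $\Prob((\T^m)^G)$ with the weak$^*$ topology (pushing forward is continuous from convergence in measure to weak$^*$ convergence).

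The cleanest route to the contradiction, which I would actually carry out, is a rescaling argument: suppose the extension $\Phi\colon\ell^p(G,\R)\to\Prob((\T^m)^G)$ exists and is continuous. Consider $s\xi$ for small real $s$; as $s\to 0$, $s\xi\to 0$ in $\ell^p$, so $\Phi(s\xi)\to\Phi(0)=\delta_{\mathbf 0}$, hence $\widehat{\Phi(s\xi)}(\alpha)\to 1$ for every fixed $\alpha\in\Z(G)$. But for $\xi\in\ell^p(G,\R)$ with $\sum a_n^2=\infty$ and a suitable fixed $\alpha$, the formula gives $\widehat{\Phi(s\xi)}(\alpha)=\prod_g \widehat\nu(s\cdot(r(\xi)\alpha)(g))$, and since $(r(\xi)\alpha)(g)$ has the same $\ell^2$-non-summability as $(a_n)$ while still lying in $\ell^p$, we get $\sum_g |1-\widehat\nu(s\,(r(\xi)\alpha)(g))|\geq c\,s^2\sum_g (r(\xi)\alpha)(g)^2 = \infty$ for every $s\ne 0$ (using $|1-\widehat\nu(t)|\geq c t^2$ for $|t|$ small, valid since $\nu\ne\delta_0$), which forces $\widehat{\Phi(s\xi)}(\alpha)=0$ for all $s\ne0$ — contradicting $\widehat{\Phi(s\xi)}(\alpha)\to 1$. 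The main obstacle, and the step I expect to need the most care, is two-fold: (a) choosing $\alpha$ so that $r(\xi)\alpha$ genuinely inherits the property ``$\ell^p$ but not $\ell^2$'' — here I would just take $\alpha=\delta_e$, so that $r(\xi^*)$-type convolution by a single delta essentially translates/reflects $\xi$ and $(r(\xi)\alpha)(g)=\xi(g^{-1})$ or similar, preserving all $\ell^q$ norms; and (b) justifying the lower bound $|1-\widehat\nu(t)|\geq ct^2$ near $0$ uniformly, which follows from the displayed identity $\rea\left(\frac{1-\widehat\nu(t)}{t^2}\right)=2\int(\sin(\pi tx)/t)^2\,d\nu(x)$ together with Fatou (the liminf as $t\to0$ of the right side is $2\pi^2\int x^2\,d\nu$, which is $>0$ unless $\nu=\delta_0$, and if $\int x^2\,d\nu=\infty$ the bound is even stronger), so $\rea(1-\widehat\nu(t))\geq c t^2$ for $|t|$ below some threshold. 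I will also need to note that only finitely many $(r(\xi)\alpha)(g)$ can exceed that threshold in absolute value (since $r(\xi)\alpha\in\ell^p\subseteq c_0$), so the tail of the sum $\sum_g(r(\xi)\alpha)(g)^2=\infty$ lies in the regime where the quadratic lower bound applies, making the divergence to $0$ of the infinite product rigorous.
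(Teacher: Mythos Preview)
Your reduction of (i) to (ii) and the reduction to $m=1$ match the paper, and the overall strategy---build $\xi\in\ell^{p}\setminus\ell^{2}$, look at the Fourier coefficient $\widehat{\mu}_{\xi^{(N)}}(\delta_{e})=\prod_{n\leq N}\widehat{\nu}(a_{n})$, and extract a contradiction from continuity---is sound in spirit but genuinely different from the paper's route. However, there is a real gap in the step you flag as your main conclusion.

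\textbf{The gap.} You deduce correctly from Fatou that $\rea(1-\widehat{\nu}(t))\geq ct^{2}$ for small $t$, hence $\sum_{n}|1-\widehat{\nu}(sa_{n})|=\infty$. You then assert this ``forces $\widehat{\Phi(s\xi)}(\alpha)=0$.'' That implication is false in general: divergence of $\sum|1-z_{n}|$ does \emph{not} force $\prod(1-z_{n})\to 0$ for complex $z_{n}$. The cleanest counterexample is $\nu=\delta_{a}$ with $a\neq 0$: then $\widehat{\nu}(t)=e^{2\pi i a t}$, so $|\widehat{\nu}(sa_{n})|=1$ for every $n$, and the partial products have modulus $1$---they can never tend to $0$. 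What you actually need is $1-|\widehat{\nu}(t)|\geq c't^{2}$, which does \emph{not} follow from $\rea(1-\widehat{\nu}(t))\geq ct^{2}$ (indeed it fails for $\delta_{a}$). Your argument can be repaired by splitting into two cases: if $\nu$ is not a point mass, apply your Fatou argument to the symmetrization $\nu*\nu^{*}$ to get $1-|\widehat{\nu}(t)|^{2}\geq c't^{2}$, and then the product of moduli genuinely tends to $0$; if $\nu=\delta_{a}$, one must instead choose the $a_{n}$ so that $\sum a_{n}$ diverges and argue the partial products $e^{2\pi i a s\sum_{n\leq N}a_{n}}$ fail to converge at all. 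This second case requires care in the construction of $\xi$ (a block construction with alternating signs would break it).

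\textbf{Comparison with the paper.} The paper avoids this dichotomy entirely. Rather than the quadratic lower bound $|1-\widehat{\nu}(t)|\gtrsim t^{2}$, it proves a sharper Claim~1: there is a coordinate $j$ with $|1-\widehat{\nu}(te_{j})|/|t|^{p}\to\infty$ as $t\to 0$ (this uses $p>2$ essentially). It then proves a Claim~2 via a permutation trick you do not use: since any two enumerations of $G$ give truncations converging in $\ell^{p}$ to the same $\xi$, continuity of the hypothetical extension forces the infinite product $\prod_{g}\widehat{\nu}(\widetilde{\xi}(g))$ to be \emph{unconditionally} convergent, hence absolutely convergent, i.e.\ $\sum_{g}|1-\widehat{\nu}(\widetilde{\xi}(g))|<\infty$. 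A tailored block construction using Claim~1 then produces a $\xi\in\ell^{p}$ violating this. The permutation argument is what lets the paper bypass the modulus issue; your rescaling argument is more direct when it works but does not cover the point-mass case without additional input.
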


\begin{proof}
For $\xi \in M_{m,k}(c_{c}(G,\R)),$ we let $\mu_{\xi}=(\Theta_{\xi})_{*}(\nu^{\otimes G}).$ If we could continuously extend $\xi\mapsto \Theta_{\xi},$ then we could continuously extend $\xi\mapsto \mu_{\xi}$ by composing with the continuous map $\Meas((\R^{k})^{G},\nu^{\otimes G},(\T^{m})^{G})\to \Prob((\T^{m})^{G})$ given by $\Theta\mapsto \Theta_{*}(\nu^{\otimes G}).$ So it suffices to prove (\ref{I:cant probability extend}). As in the proof of Proposition \ref{P:hs nonsense}, we may, and will, assume that $m=1.$

We first claim the following.

\emph{Claim 1. There is a $1\leq j\leq k$ so that}
\[\lim_{t\to 0}\frac{|1-\widehat{\nu}(te_{j})|}{|t|^{p}}=\infty.\]
Suppose the claim is false. Fix a $1\leq j\leq k.$ By assumption, we can find a sequence $t_{n}\to 0$ with $t_{n}\ne 0$ for all $n$ so that
\[M=\sup_{n}\frac{|1-\widehat{\nu}(t_{n}e_{j})|}{|t_{n}|^{p}}<\infty.\]
By direct computation,
\[ \rea{\left(\frac{1-\widehat{\nu}(t_{n}e_{j})}{|t_{n}|^{2}}\right)}=2\int_{\R^{k}}\left(\frac{\sin(\pi t_{n}x_{j})}{t_{n}}\right)^{2}\,d\nu(x),\]
so
\[\int_{\R^{k}}\left(\frac{\sin(\pi t_{n}x_{j})}{t_{n}}\right)^{2}\,d\nu(x)\leq \frac{M}{2}|t_{n}|^{p-2}.\]
Applying Fatou's Lemma, it follows that
\[\int |x_{j}|^{2}\,d\nu(x)=0.\]
Since this is true for every $1\leq j\leq k,$ we have that
\[\int \|x\|_{2}^{2}\,d\nu(x)=0,\]
and this contradicts our assumption that $\nu$ is not the dirac mass at $0.$

Now suppose that a continuous extension of $\xi\mapsto \mu_{\xi}$ exists, we continue to denote the image of $\xi\in M_{1,k}(\ell^{p}(G,\R))$ under this extension by $\mu_{\xi}.$ We make another claim. For $\xi\in M_{1,k}(\ell^{p}(G,\R)),$ we define $\widetilde{\xi}\in \ell^{p}(G,\R^{k})$ by $\widetilde{\xi}(g)(j)=\xi_{1j}(g)$ for $1\leq j\leq k,g\in G.$

\emph{Claim 2. For every $\xi\in M_{1,k}(\ell^{p}(G,\R^{k})))$ with $\widehat{\nu}(\widetilde{\xi}(g))\ne 0$ for every $g\in G$ we have}
\[\sum_{g\in G}|1-\widehat{\nu}(\widetilde{\xi}(g))|<\infty.\]
Fix an enumeration $(g_{n})_{n=1}^{\infty}$ of $G$ and a permutation $\sigma\colon \N\to\N.$ For $n\in \N,$ let $E_{n}=\{g_{j}:1\leq j\leq n\},$ $F_{n}=\{g_{\sigma(j)}:1\leq j\leq n\},$ and define $\xi_{n},\zeta_{n}\in M_{1,k}(\ell^{p}(G,\R))$ by $(\xi_{n})_{1j}(g)=1_{E_{n}}(g)\widetilde{\xi}(g)(j),$ $(\zeta_{n})_{1j}(g)=\widetilde{\xi}(g)(j)1_{F_{n}}(g).$ Then,
\[\lim_{n\to\infty}\prod_{j=1}^{n}\widehat{\nu}(\widetilde{\xi}(g_{j}))=\lim_{n\to\infty}\widehat{\mu}_{\xi_{n}}(1)=\widehat{\mu}_{\xi}(1),\]
\[\lim_{n\to\infty}\prod_{j=1}^{n}\widehat{\nu}(\widetilde{\xi}(g_{\sigma(j)}))=\lim_{n\to\infty}\widehat{\mu}_{\zeta_{n}}(1)=\widehat{\mu}_{\xi}(1).\]
So
\[\lim_{n\to\infty}\prod_{j=1}^{n}\widehat{\nu}(\widetilde{\xi}(g_{\sigma(j)}))=\lim_{n\to\infty}\prod_{j=1}^{n}\widehat{\nu}(\widetilde{\xi}(g_{j}))).\]
Thus the infinite product $\prod_{g\in G}\widehat{\nu}(\widetilde{\xi}(g))$ does not depend upon how one enumerates $G,$ and so by a well known theorem we have that
\[\sum_{g\in G}|1-\widehat{\nu}(\widetilde{\xi}(g))|<\infty.\]

By  claim 1, we may find a $1\leq j\leq k$ and a sequence $t_{n}\in \R$ of nonzero real numbers with $|t_{n}|<2^{-n/p}$ so that $|1-\widehat{\nu}(t_{n}e_{j})|\geq 2^{n}|t_{n}|^{p}.$ We may also choose $t_{n}$ so that $|1-\widehat{\nu}(t_{n}e_{j})|< 1/2$ for all $n\in \N.$
Choose a sequence $(E_{n})_{n}$ of disjoint, nonempty, finite subsets of $G$ so that $\frac{2^{-n}}{|t_{n}|^{p}}\leq |E_{n}|<\frac{2^{-n}}{|t_{n}|^{p}}+1.$ Since $|t_{n}|<2^{-n/p}$ and $G$ is infinite, it is possible to choose such a sequence. Now define $\xi\in M_{1,k}(\R^{G})$ by $\xi_{1l}=\delta_{l=j}\sum_{n}t_{n}1_{E_{n}}.$ We then have that
\[\|\widetilde{\xi}\|_{p}^{p}=\sum_{n}|t_{n}|^{p}|E_{n}|\leq \sum_{n}2^{-n}+|t_{n}|^{p}\leq 2\sum_{n}2^{-n}<\infty.\]
So $\xi\in M_{1,k}(\ell^{p}(G,\R))$ and $\widehat{\nu}(\widetilde{\xi}(g))\ne 0$ for all $g\in G.$ So by claim 2,
\[\sum_{g\in G}|1-\widehat{\nu}(\widetilde{\xi}(g))|<\infty.\]
But
\[\sum_{g\in G}|1-\widehat{\nu}(\widetilde{\xi}(g))|=\sum_{n}|1-\widehat{\nu}(t_{n}e_{j})||E_{n}|\geq \sum_{n}2^{n}2^{-n}=\infty,\]
and this gives a contradiction.

\end{proof}

We also show that the assumption that $\nu$ has a finite second moment and has mean zero is necessary to extend $\Theta_{\xi}$ to $\xi\in M_{m,k}(\ell^{2}(G,\R)).$

\begin{prop}\label{P:cant extend seconds}
Let $G$ be a countably infinite group, and $k\in \N,m\in \N\cup\{\infty\}.$ Suppose that $\nu\in \Prob(\R^{k})$ and that $\nu$ is not the dirac mass at $0.$ Suppose that one of the following two conditions hold:
\begin{itemize}
    \item either $\nu$ does not have a finite second moment, or
    \item $\nu$ has a finite second moment but does not have mean zero.
\end{itemize}
\begin{enumerate}[(i)]
\item There is no extension of the map  $M_{m,k}(c_{c}(G,\R))\to \Meas((\R^{k})^{G},\nu^{\otimes G},(\T^{m})^{G}),$ $\xi\mapsto \Theta_{\xi}$ to a continuous map $M_{m,k}(\ell^{2}(G,\R))\to \Meas((\R^{k})^{G},\nu^{\otimes G},(\T^{m})^{G}).$ \label{I:cant extend map without seconds}
\item There is no  extension of the map  $M_{m,k}(c_{c}(G,\R))\to \Prob((\T^{m})^{G}),$ $\xi\mapsto (\Theta_{\xi})_{*}(\nu^{\otimes G})$ to a continuous map $M_{m,k}(\ell^{2}(G,\R))\to \Prob((\T^{m})^{G}).$ \label{I:cant probability extend with seconds}
\end{enumerate}

\end{prop}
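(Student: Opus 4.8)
The plan is to follow the proof of Proposition~\ref{P:cant extend} almost verbatim, at the endpoint exponent $p=2$. Exactly as there, a continuous extension of $\xi\mapsto\Theta_{\xi}$ to $M_{m,k}(\ell^{2}(G,\R))$ would give, upon composition with the continuous map $\Theta\mapsto\Theta_{*}(\nu^{\otimes G})$ from $\Meas((\R^{k})^{G},\nu^{\otimes G},(\T^{m})^{G})$ to $\Prob((\T^{m})^{G})$, a continuous extension of $\xi\mapsto\mu_{\xi}:=(\Theta_{\xi})_{*}(\nu^{\otimes G})$; so it suffices to prove (\ref{I:cant probability extend with seconds}), and by restricting to $\xi$ supported in the first row (as in Proposition~\ref{P:hs nonsense}) we may assume $m=1$. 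Throughout, for $\xi\in M_{1,k}(\ell^{2}(G,\R))$ I write $\widetilde\xi\in\ell^{2}(G,\R^{k})$ for $\widetilde\xi(g)(j)=\xi_{1j}(g)$, so that $\widehat{\mu}_{\xi}(1)=\prod_{g\in G}\widehat{\nu}(\widetilde\xi(g))$ when $\xi\in M_{1,k}(c_{c}(G,\R))$, by Theorem~\ref{T:pushforward props}~(\ref{I:Fourier transform formula equiv}).

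The one genuinely new ingredient is the analogue of Claim~1 from Proposition~\ref{P:cant extend}: \emph{there is a $1\le j\le k$ with $\lim_{t\to 0}\frac{|1-\widehat{\nu}(te_{j})|}{t^{2}}=\infty$.} I would argue the two cases separately. If $\nu$ has no finite second moment, then since $k$ is finite and $\|x\|_{2}^{2}=\sum_{l=1}^{k}x_{l}^{2}$, some coordinate has $\int x_{j}^{2}\,d\nu(x)=\infty$; applying Fatou's Lemma to the identity
\[\rea\left(\frac{1-\widehat{\nu}(te_{j})}{t^{2}}\right)=2\int_{\R^{k}}\left(\frac{\sin(\pi t x_{j})}{t}\right)^{2}\,d\nu(x),\]
together with the pointwise limit $\left(\frac{\sin(\pi t x_{j})}{t}\right)^{2}\to\pi^{2}x_{j}^{2}$ as $t\to 0$, yields $\liminf_{t\to 0}\rea\left(\frac{1-\widehat{\nu}(te_{j})}{t^{2}}\right)\ge 2\pi^{2}\int x_{j}^{2}\,d\nu=\infty$, hence the claim. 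If instead $\nu$ has a finite second moment but nonzero mean, then $\widehat{\nu}$ is $C^{2}$ and $\partial_{j}\widehat{\nu}(0)=2\pi i\int x_{j}\,d\nu\ne 0$ for some $j$, so $1-\widehat{\nu}(te_{j})=-2\pi i\left(\int x_{j}\,d\nu\right)t+O(t^{2})$, and therefore $\frac{|1-\widehat{\nu}(te_{j})|}{t^{2}}\sim\frac{2\pi|\int x_{j}\,d\nu|}{|t|}\to\infty$.

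With Claim~1 established, the rest is the argument of Proposition~\ref{P:cant extend} with $p=2$. The analogue of Claim~2 holds by the same proof: assuming a continuous extension $\xi\mapsto\mu_{\xi}$ on $M_{1,k}(\ell^{2}(G,\R))$, for any $\xi$ with $\widehat{\nu}(\widetilde\xi(g))\ne 0$ for all $g\in G$, truncating $\widetilde\xi$ along two arbitrary enumerations of $G$ and using continuity of the extension shows that $\prod_{g\in G}\widehat{\nu}(\widetilde\xi(g))$ is independent of the enumeration, whence $\sum_{g\in G}|1-\widehat{\nu}(\widetilde\xi(g))|<\infty$. Now pick $j$ as in Claim~1 and nonzero reals $t_{n}$ with $|t_{n}|<2^{-n}$, with $|1-\widehat{\nu}(t_{n}e_{j})|\ge 4^{n}t_{n}^{2}$, and (automatic once $|t_{n}|$ is small) with $|1-\widehat{\nu}(t_{n}e_{j})|<\tfrac12$, so $\widehat{\nu}(t_{n}e_{j})\ne 0$; choose pairwise disjoint nonempty finite sets $E_{n}\subseteq G$ with $\frac{2^{-n}}{t_{n}^{2}}\le|E_{n}|<\frac{2^{-n}}{t_{n}^{2}}+1$ (possible since $G$ is infinite), and define $\xi\in M_{1,k}(\R^{G})$ by $\xi_{1l}=\delta_{l=j}\sum_{n}t_{n}1_{E_{n}}$. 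Then $\|\widetilde\xi\|_{2}^{2}=\sum_{n}t_{n}^{2}|E_{n}|\le\sum_{n}(2^{-n}+t_{n}^{2})<\infty$, so $\xi\in M_{1,k}(\ell^{2}(G,\R))$ and $\widehat{\nu}(\widetilde\xi(g))\ne 0$ for all $g$, while
\[\sum_{g\in G}|1-\widehat{\nu}(\widetilde\xi(g))|=\sum_{n}|E_{n}|\,|1-\widehat{\nu}(t_{n}e_{j})|\ge\sum_{n}\frac{2^{-n}}{t_{n}^{2}}\cdot 4^{n}t_{n}^{2}=\sum_{n}2^{n}=\infty,\]
contradicting the analogue of Claim~2, and proving (\ref{I:cant probability extend with seconds}).

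I expect the only real work to be Claim~1 in the no-second-moment case: one must localize the hypothesis $\int\|x\|_{2}^{2}\,d\nu=\infty$ to a single coordinate (which uses $k<\infty$) and apply Fatou to $\rea\left(\tfrac{1-\widehat{\nu}(te_{j})}{t^{2}}\right)$ rather than to $\widehat{\nu}$ itself. The mean-nonzero case is an immediate first-order Taylor estimate, and everything downstream — the reduction to the measure statement and to $m=1$, the rearrangement argument for Claim~2, and the block construction — is bookkeeping identical to Proposition~\ref{P:cant extend}.
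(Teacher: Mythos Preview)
Your proposal is correct and follows essentially the same approach as the paper. The only structural difference is that you prove the key claim (that some $j$ has $\lim_{t\to 0}|1-\widehat{\nu}(te_j)|/t^{2}=\infty$) by a direct case split on the two hypotheses, whereas the paper argues by contradiction---assuming the claim fails for every $j$, using Fatou to force finite second moment, and then invoking the nonzero-mean hypothesis to reach a contradiction via the derivative; the Fatou and first-derivative ingredients are identical, and the paper likewise defers the remainder to Proposition~\ref{P:cant extend} verbatim.
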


\begin{proof}
As in the proof of Proposition \ref{P:cant extend}, we may, and will, assume that $m=1.$ As in the proof of Proposition \ref{P:cant extend}, it suffices to show (\ref{I:cant probability extend with seconds}).

We first prove the following claim.
\emph{Claim. There is a $1\leq j\leq k$ so that}
\[\lim_{t\to 0}\frac{|1-\widehat{\nu}(te_{j})|}{|t|^{2}}=\infty.\]
Suppose that the claim is false. We first show that $\nu$ has a finite second moment. We may apply Fatou's Lemma as in the proof of Proposition \ref{P:cant extend} to see that for every $1\leq j\leq k:$
\[\int |x_{j}|_{2}^{2}\,d\nu(x)=\frac{1}{\pi^{2}}\int\lim_{t\to 0}\left(\frac{\sin(\pi t x_{j})}{t}\right)^{2}\,d\nu(x)\leq \liminf_{t\to 0}\frac{1}{2\pi^{2}}\rea{\left(\frac{1-\widehat{\nu}(t e_{j})}{t^{2}}\right)}<\infty,\]
since we are assuming that
\[\liminf_{t\to 0}\frac{|1-\widehat{\nu}(te_{j})|}{|t|^{2}}<\infty.\]
Since this is true for all $j=1,\cdots,k$ we see that $\nu$ has a finite second moment.

Thus, by hypothesis, we must have that $\nu$ does not have mean zero. So we may choose a $1\leq j\leq k$ so that $\int x_{j}\,d\nu(x)\ne 0.$ Since $\nu$ has a finite second moment, we may apply the dominated convergence theorem to see that
\[\lim_{t\to 0}\frac{\widehat{\nu}(te_{j})-1}{t}=\frac{\partial \widehat{\nu}}{\partial t_{j}}(0)=2\pi i\int x_{j}\,d\nu(x)\ne 0.\]
Thus
\[\lim_{t\to 0}\frac{|1-\widehat{\nu}(te_{j})|}{t^{2}}=\infty.\]
This gives a contradiction, so we have shown the claim.

Once we have shown the claim, the proof proceeds, mutatis mutandis, as in Proposition \ref{P:cant extend}.

\end{proof}

Propositions \ref{P:cant extend},\ref{P:cant extend seconds} show that defining $\Theta_{\xi}$ in a continuous manner when $\xi\notin M_{m,k}(\ell^{2}(G,\R))$ is not possible, and that our hypothesis that $\nu\in \Prob(\R^{k})$ has mean zero and a finite second moment is necessary. Thus Corollary \ref{C:meas extens} should be taken as the optimal context in which we can measurably  extend convolution $\xi$ beyond assuming that $\xi$ is finitely supported.

Though we will not use it much, we close this section by connecting our definition of $X^{\xi}$ to the group $X_{f}$ associated to an $f\in M_{m,k}(\Z(G)).$ The action $G\actson X_{f}$ has appeared in several previous works on algebraic actions (e.g. see \cite{BowenEntropy,BowenExamples, BowenLi, Me5, LiThom, LindSchmidt2}).

\begin{defn}
Let $m\in \N\cup\{\infty\},k\in \N,$ and  $f\in M_{k,m}(\Z(G)).$ We let $X_{f}$ be the Pontryagin dual of $\Z(G)^{\oplus m}/r(f)(\Z(G)^{\oplus k}). $ It is equipped with the natural algebraic action $G\actson X_{f}$ dual to the natural action of $G$ on $\Z(G)^{\oplus m}/r(f)(\Z(G)^{\oplus k}):$
\[(gx)(a)=x(g^{-1}a)\mbox{ for $g\in G,a\in \Z(G)^{\oplus m}/r(f)(\Z(G)^{\oplus k})$, $x\in X_{f}$}.\]

\end{defn}

By Pontryagin duality, we may naturally identify $X_{f}$ with $(r(f)(\Z(G)^{\oplus k}))^{o}$ and in this way regard it as a closed subgroup of $(\T^{m})^{G}.$ We will implicitly do so for the remainder of the paper.

We will need notation for multiplication of matrices. If $f\in M_{m,n}(\R(G)),\xi\in M_{n,k}(\ell^{2}(G,\R))$ with $n\in \N,m,k\in \N\cup\{\infty\},$ then we define $f\xi\in M_{m,k}(\ell^{2}(G,\R))$ by
\[(f\xi)_{ij}=\sum_{l=1}^{n}\lambda(f_{il})\xi_{lj} \mbox{ for $1\leq i\leq m,1\leq j\leq k$.}\]
Similarly, if $f\in M_{m,n}(\R(G)),\xi\in M_{k,m}(\ell^{2}(G,\R)),$ with $m\in \N,n,k\in \N\cup\{\infty\},$ then we define $\xi f\in M_{k,n}(\R(G))$ by
\[(\xi f)_{ij}=\sum_{l=1}^{m}\lambda(\xi_{il})f_{lj},\mbox{ for $1\leq i\leq k, 1\leq j\leq n. $}\]
Here we recall our notational conventions stated at the beginning of Section \ref{S:extend convolve}.

\begin{defn}
Given $f\in M_{n}(\R(G))$ we say that $\xi\in M_{n}(\ell^{2}(G,\R))$ is an $\ell^{2}$ formal inverse to $f$ if
\[(\xi f)_{ij}=\delta_{i=j}\delta_{1}.\]
\end{defn}

It is well known that if $\xi$ is an $\ell^{2}$ formal inverse to $f,$ then $(f\xi)_{ij}=\delta_{i=j}\delta_{1}$ (see \cite[Proposition 2.2 (iii)]{MeWE}).  We leave it as an exercise to the reader to check that for $m,n\in \N,k\in \N\cup\{\infty\},$ and $f\in M_{m,n}(\R(G)),$ $\xi\in M_{k,m}(\ell^{2}(G,\R)),$ $\zeta\in M_{n,k}(\ell^{2}(G,\R)),$ $\alpha\in \R(G)^{\oplus k},\beta\in \R(G)^{\oplus m}, $
\[r(f)r(\xi)\alpha=r(\xi f)\alpha,\]
\[r(\zeta)r(f)\beta=r(f\zeta)\beta.\]

Similarly, for $m,n\in \N, k\in \N\cup\{\infty\},$ $f\in M_{m,n}(\R(G)),$ $\xi\in M_{n,k}(\ell^{2}(G,\R)),$ $\zeta\in M_{k,m}(\ell^{2}(G,\R)),$ $\alpha\in \R(G)^{\oplus k},$ $\beta\in \R(G)^{\oplus n},$
\[\lambda(f)\lambda(\xi)\alpha=\lambda(f\xi)\alpha,\]
\[\lambda(\zeta)\lambda(f)\beta=\lambda(\zeta f)\beta.\]

\begin{prop}
Let $n\in\N,$ and suppose that $f\in M_{n}(\Z(G))$ has an $\ell^{2}$ formal inverse $\xi\in M_{n}(\ell^{2}(G,\R)).$ Then,
\[X_{f}=X^{\xi^{*}}.\]

\end{prop}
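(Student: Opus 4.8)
The plan is to identify both $X_f$ and $X^{\xi^*}$ as subgroups of $(\T^n)^G$ by computing their annihilators in $\Z(G)^{\oplus n}$ and checking these annihilators coincide; then Pontryagin duality ($X = (X^o)^o$, as quoted from \cite[Lemma 2.1.3]{Rud}) finishes the job. By definition, $X_f$ is identified with $(r(f)(\Z(G)^{\oplus n}))^o$, so $(X_f)^o = r(f)(\Z(G)^{\oplus n})$. By Proposition \ref{P:annhilator computation} applied to $\xi^* \in M_{n}(\ell^2(G,\R))$ (note $(\xi^*)^* = \xi$), we have
\[
(X^{\xi^*})^o = \{\alpha \in \Z(G)^{\oplus n} : r(\xi)\alpha \in \Z(G)^{\oplus n}\}.
\]
So the whole statement reduces to the algebraic identity
\[
r(f)(\Z(G)^{\oplus n}) = \{\alpha \in \Z(G)^{\oplus n} : r(\xi)\alpha \in \Z(G)^{\oplus n}\}.
\]

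For the inclusion $\subseteq$: if $\alpha = r(f)\beta$ with $\beta \in \Z(G)^{\oplus n}$, then by the composition identity $r(\xi)r(f)\beta = r(f\xi)\beta$ recorded just before the proposition (with the roles set up so that $f\xi$ makes sense), and since $\xi$ is an $\ell^2$ formal inverse to $f$ we have $(f\xi)_{ij} = \delta_{i=j}\delta_1$ (this is exactly \cite[Proposition 2.2 (iii)]{MeWE}, also quoted in the excerpt). Hence $r(\xi)r(f)\beta = r(f\xi)\beta = \beta \in \Z(G)^{\oplus n}$, so $\alpha$ lies in the right-hand set. For the reverse inclusion $\supseteq$: suppose $\alpha \in \Z(G)^{\oplus n}$ with $r(\xi)\alpha \in \Z(G)^{\oplus n}$; set $\beta = r(\xi)\alpha$. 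Then $r(f)\beta = r(f)r(\xi)\alpha = r(\xi f)\alpha$ by the other composition identity $r(f)r(\xi)\alpha = r(\xi f)\alpha$, and since $\xi$ is an $\ell^2$ formal inverse, $(\xi f)_{ij} = \delta_{i=j}\delta_1$, so $r(\xi f)\alpha = \alpha$. Thus $\alpha = r(f)\beta$ with $\beta \in \Z(G)^{\oplus n}$, i.e.\ $\alpha \in r(f)(\Z(G)^{\oplus n})$.

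Having shown $(X_f)^o = (X^{\xi^*})^o$ as subsets of $\Z(G)^{\oplus n} = \widehat{(\T^n)^G}$, applying $(\cdot)^o$ once more and using $Y = (Y^o)^o$ for closed subgroups $Y \le (\T^n)^G$ gives $X_f = X^{\xi^*}$. The only mild subtlety — and the step I would be most careful about — is bookkeeping with the composition identities $r(f)r(\xi)\alpha = r(\xi f)\alpha$ and $r(\xi)r(f)\beta = r(f\xi)\beta$: one must check the matrix dimensions match the conventions of Section \ref{S:extend convolve} (here all indices are $n$, so $m=n=k=n$ and everything is square), that the formal-inverse hypothesis indeed yields $f\xi = 1 = \xi f$ as convolution identity matrices, and that $r$ of the identity matrix is the identity map on $\Z(G)^{\oplus n}$. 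All of this is routine given the exercises stated just above the proposition, so no genuine obstacle remains.
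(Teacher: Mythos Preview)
Your proof is correct and follows essentially the same approach as the paper: both reduce the claim to showing $(X^{\xi^*})^o = r(f)(\Z(G)^{\oplus n})$ via Proposition~\ref{P:annhilator computation}, then verify both inclusions using the composition identities $r(f)r(\xi)\alpha = r(\xi f)\alpha$ and $r(\xi)r(f)\beta = r(f\xi)\beta$ together with the two-sided formal inverse relations $\xi f = f\xi = \delta_1 \otimes \id$. The only cosmetic difference is the order in which you present the two inclusions.
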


\begin{proof}
By Proposition \ref{P:annhilator computation}, we have that
\[(X^{\xi^{*}})^{o}=\{\alpha\in \Z(G)^{\oplus n}:r(\xi)\alpha\in \Z(G)^{\oplus n}\}.\]
For $\alpha\in \Z(G)^{\oplus n},$ we have that
$r(f)r(\xi)\alpha=r(\xi f)\alpha.$
Since $\xi$ is a left formal inverse to $f,$
\[r(f)r(\xi)\alpha=\alpha.\]
So if $\alpha\in (X^{\xi^{*}})^{o},$ then $\alpha\in r(f)(\Z(G)^{\oplus n}).$

Conversely, suppose that $\alpha\in r(f)(\Z(G)^{\oplus n}),$ and write $\alpha=r(f)\beta$ with $\beta\in \Z(G)^{\oplus n}.$ Then
\[r(\xi)\alpha=r(\xi)r(f)\beta=r(f\xi)\beta=\beta\in \Z(G)^{\oplus n}.\]
So we have shown that $(X^{\xi^{*}})^{o}=r(f)(\Z(G)^{\oplus n}).$ By definition, $X_{f}^{o}=r(f)(\Z(G)^{\oplus n}).$ So $(X^{\xi^{*}})^{o}=X_{f}^{o},$ and thus  $X_{f}=X^{\xi^{*}}.$

\end{proof}

\section{Applications to strong soficity and completely positive entropy}\label{S:appplications}

Note that if $\xi,\zeta\in \ell^{2}(G),$ then $\xi*\zeta\in \ell^{\infty}(G),$ where
\[(\xi*\zeta)(g)=\sum_{h\in G}\xi(h)\zeta(h^{-1}g)\mbox{ for all $g\in G.$}\]
We let
\[L(G)=\{\xi\in\ell^{2}(G):\xi*\zeta\in\ell^{2}(G) \mbox{ for all $\zeta\in \ell^{2}(G)$}\}.\]
It follows from the closed graph theorem that the operator $\lambda(\xi)\colon \ell^{2}(G)\to \ell^{2}(G)$ given by $\lambda(\xi)\zeta=\xi*\zeta$ is bounded. We let $\|\xi\|_{L(G)}$ denote the operator norm of this operator. For $\xi,\zeta\in L(G),$ we will typically use $\xi\zeta$ instead of $\xi*\zeta.$ It is direct to check that for $\xi,\zeta\in L(G)$ we have that $\lambda(\xi)\lambda(\zeta)=\lambda(\xi\zeta)$ (e.g. both sides agree on $c_{c}(G)$ and so the result follows by continuity), and by definition we have that $\xi=\lambda(\xi)\delta_{1}.$ From this, it follows that  $(\xi \zeta)\eta=\xi(\zeta \eta)$ for $\xi,\zeta,\eta\in L(G).$ It is direct to show that for $\xi\in L(G)$ we have that $\lambda(\xi)^{*}=\lambda(\xi^{*}),$ where $\xi^{*}\in \ell^{2}(G)$ is defined by $\xi^{*}(g)=\overline{\xi(g^{-1})}.$ Thus $\xi\in L(G)$ implies that $\xi^{*}\in L(G),$ and  $\|\xi^{*}\|_{L(G)}=\|\xi\|_{L(G)}.$ Since $(\xi*\zeta)^{*}=\zeta^{*}*\xi^{*}$ for all $\xi,\zeta\in \ell^{2}(G),$ we see that if $\xi\in L(G),$ then for all $\zeta\in \ell^{2}(G)$ we have that $\zeta*\xi\in \ell^{2}(G)$ and $\|\zeta*\xi\|_{2}\leq \|\xi\|_{L(G)}\|\zeta\|.$  We let $L_{\R}(G)=L(G)\cap \ell^{2}(G,\R),$ it is easy to see that $L_{\R}(G)$ is a real subalgebra of $L(G).$

If $m,n\in \N,$ and $\xi\in M_{m,n}(L(G)),$ we can define a bounded operator $\lambda(\xi)\colon \ell^{2}(G)^{\oplus n}\to \ell^{2}(G)^{\oplus m}$ via the canonical identification
\[B(\ell^{2}(G)^{\oplus n},\ell^{2}(G)^{\oplus m})\cong M_{m,n}(B(\ell^{2}(G))).\]
We let $\|\xi\|_{M_{m,n}(L(G))}$ be the norm of this operator.

By the same methods as in \cite[Proposition 2.2. (ii)]{MeWE}, for every $m,n,k\in \N,$ every $\xi\in M_{m,n}(L(G))$ gives a bounded operator
\[\lambda(\xi):M_{n,k}(\ell^{2}(G))\to M_{m,k}(\ell^{2}(G))\]
by
\[(\lambda(\xi)\zeta)_{ij}=\sum_{l=1}^{n}\lambda(\xi_{il})\zeta_{lj}.\]
Further,
\[\|\lambda(\xi)\|_{B(M_{n,k}(\ell^{2}(G)),M_{m,k}(\ell^{2}(G)))}=\|\xi\|_{M_{m,n}(L(G))}.\]
Similarly, we define $r(\xi)\in B(\ell^{2}(G)^{\oplus m},\ell^{2}(G)^{\oplus n})$ by
\[(r(\xi)\zeta)(j)=\sum_{l=1}^{m}\zeta(l)*\xi_{lj}\mbox{ for all $1\leq j\leq n$}.\]
Here we recall that we showed earlier in this section that $\zeta*\eta\in \ell^{2}(G)$ for all $\zeta\in \ell^{2}(G),\eta\in L(G).$
As above, one can show that 
\[\|r(\xi)\|_{B(\ell^{2}(G)^{\oplus m},\ell^{2}(G)^{\oplus n})}=\|\xi\|_{M_{m,n}(L(G))}.\]
We will also use $\delta_{1}\otimes \id\in M_{n}(\ell^{2}(G))$ for the matrix $(\delta_{1}\otimes \id)_{ij}=\delta_{i=j}\delta_{1}.$ With this notation, we may recover $\xi$ from $\lambda(\xi)$ by
\[\xi=\lambda(\xi)(\delta_{1}\otimes \id).\]

Note that, by definition, $L(G)$ is a subset of $\ell^{2}(G).$ We remind the reader of the notation $\|\xi\|_{2}$ for $\xi\in M_{m,n}(\ell^{2}(G))$ stated in the beginning of Section \ref{S:extend convolve}. With this in mind, we can define an appropriate notion of approximate inverses.

\begin{defn}
Let $G$ be a countable discrete group, and let $f\in M_{m,n}(\R(G)).$ Given $(\xi_{k})\in M_{n,m}(L_{\R}(G))$ we say that $(\xi_{k})_{k}$ is:
\begin{itemize}
    \item an \emph{approximate left inverse} to $f$ if
    \[\lim_{k\to\infty}\|\xi_{k}f-\delta_{1}\otimes \id\|_{2}=0,\mbox{ and } \sup_{k}\|\xi_{k}f\|_{M_{n}(L(G))}<\infty,\]
    \item  an \emph{approximate right inverse} to $f$ if
    \[\lim_{k\to\infty}\|f\xi_{k}-\delta_{1}\otimes \id\|_{2}=0,\mbox{ and } \sup_{k}\|f\xi_{k}\|_{M_{m}(L(G))}<\infty,\]
    \item an \emph{approximate inverse} if and only if it is both an approximate left inverse and an approximate right inverse.
\end{itemize}

\end{defn}

We recall the polar decomposition of a bounded operator $T\colon \mathcal{H}\to \mathcal{K}$ between Hilbert spaces $\mathcal{H},\mathcal{K}.$ We may write $T=U|T|$ where $|T|=(T^{*}T)^{1/2},$ and $U^{*}U=\Proj_{\ker(T)^{\perp}},UU^{*}=\Proj_{\overline{\Im{T}}}.$

By \cite[Proposition 13.3 (d)]{ConwayOT}, if $\xi\in M_{m,n}(L(G)),$ and $\lambda(\xi)=\overline{u}|\lambda(\xi)|$ is the polar decomposition, then $\overline{u}=\lambda(u)$ for some $u\in M_{m,n}(L(G)),$ and $|\lambda(\xi)|=\lambda(\zeta)$ for some $\zeta\in M_{n}(L(G)).$ In fact,
\[u=\overline{u}\cdot (\delta_{1}\otimes id),\]
\[\zeta=|\lambda(\xi)|\cdot (\delta_{1}\otimes \id).\]
Additionally, by \cite[Proposition 13.3 (a)]{ConwayOT} if $\phi\colon [0,\infty)\to \C$ is any bounded, Borel function then $\phi(|\lambda(\xi)|)=\lambda(\zeta_{\phi})$ for some $\zeta_{\phi}\in M_{n}(L(G)).$ Finally, we have that $u\in M_{m,n}(L_{\R}(G)),\zeta,\zeta_{\phi}\in M_{n}(L_{\R}(G))$ if $\xi\in M_{m,n}(L_{\R}(G)),$ and if $\phi$ is real valued. As above, we have that
\[\zeta_{\phi}=\phi(|\lambda(\xi)|)\cdot (\delta_{1}\otimes \id).\]

\begin{lem}\label{L:ideal test}
Let $G$ be a countable, discrete, group and $m,n\in \N.$ Fix an $f\in M_{m,n}(\R(G)).$
\begin{enumerate}[(a)]
\item If $\xi_{k}\in M_{n,m}(L_{\R}(G))$ is an approximate left inverse to $f,$ then for all $\zeta\in \ell^{2}(G)^{\oplus n}$ we have \label{I:SOT convergence 1}
\[\|r(f)r(\xi_{k})\zeta-\zeta\|_{2}\to 0.\]
\item If $\xi_{k}\in M_{n,m}(L_{\R}(G))$ is an approximate right inverse to $f,$ then for every $\zeta\in \ell^{2}(G)^{\oplus m}$ we have \label{I:SOT convergence 2}
\[\|r(\xi_{k})r(f)\zeta-\zeta\|_{2}\to 0.\]
\item Suppose $(\xi_{k})_{k}$ is an approximate left inverse to $f$. If $\alpha\in \R(G)^{\oplus n}$ and $\alpha\notin r(f)(\ell^{2}(G,\R)^{\oplus m}),$ then
\[\lim_{k\to\infty}\|r(\xi_{k})\alpha\|_{2}=\infty.\] \label{I:ideal test 1}

\end{enumerate}
\end{lem}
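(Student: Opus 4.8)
The plan is to prove (\ref{I:ideal test 1}) by contradiction, using part (\ref{I:SOT convergence 1}) together with weak sequential compactness of bounded sets in Hilbert space. Suppose the conclusion fails, so that $\liminf_{k\to\infty}\|r(\xi_{k})\alpha\|_{2}<\infty$. Then there is a subsequence $(k_{j})_{j}$ along which the vectors $r(\xi_{k_{j}})\alpha$ stay in a fixed bounded ball of $\ell^{2}(G,\R)^{\oplus m}$. Since $G$ is countable, $\ell^{2}(G,\R)^{\oplus m}$ is a separable Hilbert space, so after passing to a further subsequence (still denoted $(k_{j})_{j}$) we may assume that $r(\xi_{k_{j}})\alpha$ converges weakly to some $\beta\in\ell^{2}(G,\R)^{\oplus m}$.

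Next I would apply $r(f)$ to identify $\beta$. Because $f\in M_{m,n}(\R(G))\subseteq M_{m,n}(L_{\R}(G))$, the operator $r(f)\colon\ell^{2}(G,\R)^{\oplus m}\to\ell^{2}(G,\R)^{\oplus n}$ is bounded, hence continuous for the weak topologies, so $r(f)r(\xi_{k_{j}})\alpha\to r(f)\beta$ weakly. On the other hand, part (\ref{I:SOT convergence 1}) applied with $\zeta=\alpha$ gives $r(f)r(\xi_{k_{j}})\alpha\to\alpha$ in $\|\cdot\|_{2}$, and in particular weakly. By uniqueness of weak limits this forces $r(f)\beta=\alpha$, so $\alpha\in r(f)(\ell^{2}(G,\R)^{\oplus m})$, contradicting the hypothesis on $\alpha$. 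Therefore $\liminf_{k\to\infty}\|r(\xi_{k})\alpha\|_{2}=\infty$, i.e. $\lim_{k\to\infty}\|r(\xi_{k})\alpha\|_{2}=\infty$, which is the assertion.

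I do not expect a serious obstacle here: (\ref{I:ideal test 1}) is a soft consequence of (\ref{I:SOT convergence 1}). The only points that require (minor) care are the two standard facts invoked above — a bounded sequence in a separable Hilbert space has a weakly convergent subsequence, and a bounded linear operator between Hilbert spaces is weak-to-weak continuous — and the observation that $r(f)$ genuinely restricts to a bounded operator between the real $\ell^{2}$ spaces, which is immediate since $f$ has real entries and $\R(G)\subseteq L_{\R}(G)$. (If one prefers to avoid citing part (\ref{I:SOT convergence 1}), the needed convergence $r(f)r(\xi_{k_{j}})\alpha\to\alpha$ can also be obtained directly from $r(f)r(\xi_{k})\alpha=r(\xi_{k}f)\alpha$, the finite support of $\alpha$, and $\|\xi_{k}f-\delta_{1}\otimes\id\|_{2}\to 0$.)
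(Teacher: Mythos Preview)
Your proof of (\ref{I:ideal test 1}) is correct and follows essentially the same contrapositive strategy as the paper: extract a bounded subsequence, pass to a limit in $\ell^{2}(G,\R)^{\oplus m}$, and use (\ref{I:SOT convergence 1}) to identify that limit as a preimage of $\alpha$ under $r(f)$. The only difference is that the paper obtains the limit via a pointwise-convergent subsequence together with Fatou's lemma, while you invoke weak sequential compactness and weak-to-weak continuity of the bounded operator $r(f)$; both routes are standard and equally valid.
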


\begin{proof}

(\ref{I:SOT convergence 1}): If $\zeta\in \C(G)^{\oplus n},$ then simple estimates show that
\[\|r(f)r(\xi_{k})\zeta-\zeta\|_{2}\leq \sqrt{n} \left(\max_{1\leq j\leq n}\|\zeta_{j}\|_{1}\right)\|\xi_{k}f-\delta_{1}\otimes \id\|_{2}\to_{k\to\infty}0.\]
Since $\|r(f)r(\xi_{k})\|=\|\xi_{k}f\|_{M_{n}(L(G))},$ we have that $\sup_{k}\|r(f)r(\xi_{k})\|<\infty.$  Hence the case of $\zeta\in \ell^{2}(G)^{\oplus m}$ follows from the case of $\zeta\in \C(G)^{\oplus m}$ by the density of $\C(G)^{\oplus n}$ in $\ell^{2}(G)^{\oplus n}.$

(\ref{I:SOT convergence 2}): This proved in the exact same manner as (\ref{I:SOT convergence 1}).

(\ref{I:ideal test 1}):
We prove the contrapositive. So assume that $\|r(\xi_{k})\alpha\|_{2}$ does not converge to $\infty.$ Then by passing to a subsequence we may, and will, assume that there is a constant $C>0$ so that $\|r(\xi_{k})\alpha\|_{2}\leq C.$ By further passing to a subsequence we may, and will, assume that there is a $\zeta\colon G\to \R^{m}$ so that $r(\xi_{k})\alpha\to_{k\to\infty} \zeta$ pointwise. By Fatou's lemma,
\[\|\zeta\|_{2}\leq \liminf_{k\to\infty}\|r(\xi_{k})\alpha\|_{2}\leq C,\]
so $\zeta\in \ell^{2}(G,\R)^{\oplus m}.$ Moreover, since $r(\xi_{k})\alpha\to \zeta$ pointwise, we have that $r(f)r(\xi_{k})\alpha\to r(f)\zeta$ pointwise. But $r(f)r(\xi_{k})\alpha=r(\xi_{k}f)\alpha,$ and so by (\ref{I:SOT convergence 1}) we have that $\|\alpha-r(f)r(\xi_{k})\alpha\|_{2}\to 0.$ Hence $r(f)r(\xi_{k})\alpha\to \alpha$ pointwise, and so $\alpha=r(f)\zeta.$

\end{proof}

Because of the preceding proposition, it is worthwhile to address when $f$ has approximate left (or right) inverses.
\begin{prop}\label{P:approximation nonsense}
Let $G$ be a countable, discrete, group and $m,n\in \N.$ Fix an $f\in M_{m,n}(\R(G)).$ Then:
\begin{enumerate}[(a)]
\item $f$ has an approximate left inverse if and only if $\lambda(f)$ is injective, \label{I:injective approximation}
\item $f$ has an approximate right inverse if and only if $\lambda(f)$ has dense image, \label{I:dense image approximation}
\item $f$ has an approximate inverse if and only if $m=n$ and $\lambda(f)$ is injective. \label{I:inverse approximation}
\end{enumerate}

\end{prop}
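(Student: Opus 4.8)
The plan is to prove (a) directly from the polar‑decomposition facts recalled just before the statement, deduce (b) from (a) by duality, and obtain (c) by combining (a), (b) with the finiteness of the von Neumann algebra $M_n(L(G))$. The forward direction of (a) is soft. If $(\xi_k)$ is an approximate left inverse to $f$, the operators $\lambda(\xi_k f)=\lambda(\xi_k)\lambda(f)$ are uniformly bounded in operator norm, since $\|\lambda(\xi_k f)\|=\|\xi_k f\|_{M_n(L(G))}$ is bounded by hypothesis, and $\lambda(\xi_k f)(\delta_1\otimes\id)=\xi_k f\to\delta_1\otimes\id$ in $\|\cdot\|_2$. Testing on the dense subspace $c_c(G)^{\oplus n}$, whose generators $e_l\otimes\delta_g$ are sent by $\lambda(\xi_k f)$ to the vector with $j$-th coordinate $(\xi_k f)_{jl}*\delta_g=\rho(g^{-1})((\xi_k f)_{jl})\to\delta_{jl}\delta_g$, and combining with the uniform bound, one gets $\lambda(\xi_k f)\to I$ in the strong operator topology; hence $\lambda(f)v=0$ forces $v=\lim_k\lambda(\xi_k f)v=0$, so $\lambda(f)$ is injective.

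For the converse of (a), suppose $\lambda(f)$ is injective and write the polar decomposition $\lambda(f)=\lambda(u)|\lambda(f)|$ with $u\in M_{m,n}(L_\R(G))$, $|\lambda(f)|=\lambda(\zeta)$, $\zeta\in M_n(L_\R(G))$; injectivity gives $\lambda(u)^*\lambda(u)=I$ and makes $|\lambda(f)|$ injective. Choose bounded Borel $\phi_k\colon[0,\infty)\to\R$ with $\phi_k(0)=0$ and $\phi_k(t)=1/t$ for $t\ge 1/k$ (for instance $\phi_k(t)=t/(t^2+k^{-1})$), let $\lambda(\zeta_{\phi_k})=\phi_k(|\lambda(f)|)$ with $\zeta_{\phi_k}\in M_n(L_\R(G))$, and set $\xi_k=\zeta_{\phi_k}u^*\in M_{n,m}(L_\R(G))$. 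Then $\lambda(\xi_k f)=\phi_k(|\lambda(f)|)\lambda(u)^*\lambda(u)|\lambda(f)|=\psi_k(|\lambda(f)|)$ where $\psi_k(t)=t\phi_k(t)$ is $[0,1]$-valued with $\psi_k\to 1$ pointwise on $(0,\infty)$; since $|\lambda(f)|$ is injective, functional calculus gives $\psi_k(|\lambda(f)|)\to I$ strongly. Thus $\xi_k f=\lambda(\xi_k f)(\delta_1\otimes\id)\to\delta_1\otimes\id$ in $\|\cdot\|_2$ and $\|\xi_k f\|_{M_n(L(G))}=\|\psi_k(|\lambda(f)|)\|\le 1$, so $(\xi_k)$ is an approximate left inverse.

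Part (b) follows from (a) applied to $f^*$: one has $\lambda(f)$ with dense image iff $\lambda(f)^*=\lambda(f^*)$ is injective, and using $\lambda(\eta^*)=\lambda(\eta)^*$, the fact that $\eta\mapsto\lambda(\eta)$ is isometric for both $\|\cdot\|_2$ and the $M(L(G))$-norm, and $(\delta_1\otimes\id)^*=\delta_1\otimes\id$, the sequence $(\eta_k)$ is an approximate left inverse to $f^*$ exactly when $(\eta_k^*)$ is an approximate right inverse to $f$. For (c): if $f$ has an approximate inverse it has both one-sided inverses, so by (a) and (b) $\lambda(f)$ is injective with dense image; its polar partial isometry $u\in M_{m,n}(L_\R(G))$ then satisfies $u^*u=\delta_1\otimes\id_n$ and $uu^*=\delta_1\otimes\id_m$, and applying the canonical faithful trace $\tau$ of $L(G)$ (with $\tau(\eta)=\eta(1)$, $\tau(ab)=\tau(ba)$) entrywise gives $n=\sum_{i,l}\tau(u_{li}^*u_{li})=\sum_{i,l}\tau(u_{il}u_{il}^*)=m$. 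Conversely, if $m=n$ and $\lambda(f)$ is injective, the same trace identity with faithfulness forces the range projection $uu^*\le\delta_1\otimes\id_n$ to equal $\delta_1\otimes\id_n$, so $\lambda(f)$ has dense image and $u$ is unitary; the single sequence $\xi_k=\zeta_{\phi_k}u^*$ built in (a) is then also an approximate right inverse, since $\lambda(f\xi_k)=\lambda(u)\psi_k(|\lambda(f)|)\lambda(u)^*\to\lambda(u)\lambda(u)^*=I$ strongly with norm $\le 1$.

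The main obstacle is the converse of (a): one must manufacture genuine elements $\xi_k$ of $M_{n,m}(L_\R(G))$, not merely unbounded affiliated operators, that control $\|\xi_k f-\delta_1\otimes\id\|_2$ and $\|\xi_k f\|_{M_n(L(G))}$ simultaneously. This is precisely why one needs the facts recalled before the proposition — that the polar part and bounded Borel functional calculus of $\lambda(\xi)$ stay of the form $\lambda(\cdot)$ with real entries — and why the truncations $\psi_k(t)=t\phi_k(t)$ are chosen as uniformly bounded approximations of the support projection of $|\lambda(f)|$. The traciality input in (c) is routine but should be isolated explicitly.
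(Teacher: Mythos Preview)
Your proof is correct and follows essentially the same route as the paper: polar decomposition plus bounded Borel functional calculus for (a), the $f\leftrightarrow f^{*}$ duality for (b), and finiteness of $M_{n}(L(G))$ for (c). The only substantive difference is in (c): the paper cites \cite[Lemma 1.13]{Luck} for the implication ``$\lambda(f)$ injective $\Rightarrow$ dense image when $m=n$'' (and its converse), whereas you supply the standard trace argument directly; the content is the same. One cosmetic point: your parenthetical example $\phi_{k}(t)=t/(t^{2}+k^{-1})$ does not literally satisfy $\phi_{k}(t)=1/t$ for $t\ge 1/k$, but since your argument only uses that $\psi_{k}(t)=t\phi_{k}(t)$ is $[0,1]$-valued and tends to $1$ on $(0,\infty)$, both choices work.
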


\begin{proof}

(\ref{I:injective approximation}): First suppose that $f$ has an approximate left inverse $(\xi_{k})_{k}$ to $f.$ Let $\zeta\in \ker(\lambda(f)).$ By the same arguments as in Lemma \ref{L:ideal test} (\ref{I:SOT convergence 1}), we have that
\[\|\zeta\|=\|\lambda(\xi_{k})\lambda(f)\zeta-\zeta\|_{2}\to_{k\to\infty}0,\]
and thus $\zeta=0.$

Conversely, suppose that $\lambda(f)$ is injective. Let $\lambda(f)=v|\lambda(f)|$ be the polar decomposition of $\lambda(f).$ Let $\phi_{k}\colon [0,\infty)\to [0,\infty)$ be given by $\phi_{k}(t)=1_{[1/k,\infty)}(t)t^{-1},$ and set $\xi_{k}=\phi_{k}(|\lambda(f)|)v^{*}(\delta_{1}\otimes \id).$ Since $v^{*}v=\Proj_{\ker(\lambda(f))^{\perp}}=1,$ we have that $\xi_{k}f=1_{[1/k,\infty)}(|\lambda(f)|)(\delta_{1}\otimes \id).$ Hence
\[\|\xi_{k}f\|_{M_{n}(L(G))}=\|1_{[1/k,\infty)}(|\lambda(f)|)\|\leq 1.\]
Since $\lambda(f)$ is injective, we have that $1_{\{0\}}(|\lambda(f)|)=\Proj_{\ker(\lambda(f))}=0.$ Thus:
\[\|f\xi_{k}-\delta_{1}\otimes \id\|_{2}=\|1_{[0,1/k)}(|\lambda(f)|)\delta_{1}\otimes \id\|_{2}\to\|1_{\{0\}}(|\lambda(f)|)(\delta_{1}\otimes \id)\|_{2}=0,\]
by the spectral theorem.

(\ref{I:dense image approximation}):
Since $\lambda(f^{*})=\lambda(f)^{*},$ we have that $\lambda(f)$ has dense image if and only if $\lambda(f^{*})$ is injective. It is easy to see that $f$ has an approximate right inverse if and only if $f^{*}$ has an approximate right inverse, so this follows from (\ref{I:injective approximation}).

(\ref{I:inverse approximation}): If $f$ has an approximate inverse, then by (\ref{I:dense image approximation}), (\ref{I:injective approximation}), and \cite[Lemma 1.13]{Luck}, we have that $m=n$ and $\lambda(f)$ is injective.

Conversely, suppose that $m=n$ and $\lambda(f)$ is injective. Then by \cite[Lemma 1.13]{Luck}, we have that $\lambda(f)$ has dense image. Let $\lambda(f)=u|\lambda(f)|$ be the polar decomposition. Then
\[u^{*}u=\Proj_{\ker(\lambda(f))^{\perp}}=1,\,\,\, uu^{*}=\Proj_{\overline{\Im(\lambda(f))}}=1,\]
so $u$ is a unitary. Define $\phi_{k}$ as in (\ref{I:injective approximation}) and set $\xi_{k}=\phi_{k}(|\lambda(f)|)u^{*}(\delta_{1}\otimes \id).$  As in (\ref{I:injective approximation}),
\[\|\xi_{k}f-\delta_{1}\otimes \id\|_{2}\to 0.\]
Additionally, $f\xi_{k}=u1_{[1/k,\infty)}(|\lambda(f)|)u^{*}(\delta_{1}\otimes \id).$ As in (\ref{I:injective approximation}),
\[\|f\xi_{k}-u1_{(0,\infty)}(|\lambda(f)|)u^{*}\delta_{1}\otimes \id\|_{2}\to 0.\]
As in (\ref{I:injective approximation}), we know that $1_{(0,\infty)}(|\lambda(f)|)=1$ and since $u$ is a unitary this implies
\[\|f\xi_{k}-\delta_{1}\otimes \id\|_{2}\to 0.\]
\end{proof}

\begin{lem}\label{L:weak weak equiv}
Let $G$ be a countable, discrete, group, $n\in \N,$ and $f\in M_{n}(\Z(G)).$ Suppose that $\lambda(f)$ is injective.  Then there is a $\mu\in\Prob_{G}(X_{f})$ so that $G\actson (X_{f},\mu)$ is weakly contained in a Bernoulli shift and so that $|\widehat{\mu}(\alpha)|<1$ for all $\alpha\in \Z(G)^{\oplus n}\setminus r(f)(\Z(G)^{\oplus n}).$
\end{lem}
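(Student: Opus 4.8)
The plan is to realize $\mu$ as a weak$^{*}$ limit of pushforwards of one fixed Bernoulli measure under the equivariant maps $\Theta_{\xi_{k}}$ attached to an approximate inverse of $f$, and to extract both required properties from the Fourier formula of Theorem \ref{T:pushforward props}(i). First I would fix $\nu_{0}\in\Prob(\Z)$ with mean zero, finite second moment, and with $\widehat{\nu_{0}}(t)>0$ and $\widehat{\nu_{0}}(t)\leq\exp(-c\,\operatorname{dist}(t,\Z)^{2})$ for all $t\in\R$ and some $c>0$; e.g.\ $\nu_{0}=\tfrac14\delta_{-1}+\tfrac12\delta_{0}+\tfrac14\delta_{1}$, for which $\widehat{\nu_{0}}(t)=\cos^{2}(\pi t)$. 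Put $\nu=\nu_{0}^{\otimes n}\in\Prob(\Z^{n})$, so that $\widehat{\nu}(s)=\prod_{j}\widehat{\nu_{0}}(s_{j})$ satisfies $0<\widehat{\nu}(s)\leq\exp(-c\sum_{j}\operatorname{dist}(s_{j},\Z)^{2})$, with equality to $1$ precisely when $s\in\Z^{n}$. Since $f\in M_{n}(\Z(G))$ is square and $\lambda(f)$ is injective, Proposition \ref{P:approximation nonsense} provides an approximate inverse $(\xi_{k})_{k}\subseteq M_{n}(L_{\R}(G))$ of $f$; set $\mu_{k}=(\Theta_{\xi_{k}})_{*}(\nu^{\otimes G})$.

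Each $G\actson((\T^{n})^{G},\mu_{k})$ is a factor, via the equivariant measurable map $\Theta_{\xi_{k}}$ of Corollary \ref{C:meas extens}, of the Bernoulli shift $G\actson((\R^{n})^{G},\nu^{\otimes G})$, hence is weakly contained in a Bernoulli shift. Passing to a subsequence I may assume $\mu_{k}\to\mu$ weak$^{*}$; then $\mu$ is $G$-invariant, and since the set of measures on $(\T^{n})^{G}$ whose associated action is weakly contained in a Bernoulli shift is weak$^{*}$ closed (as recorded in the proof of Corollary \ref{C:structure corollary}), the same holds for $\mu$. To see $\mu$ is supported on $X_{f}$, note that by Theorem \ref{T:pushforward props}(i), $\widehat{\mu_{k}}(\alpha)=\prod_{g\in G}\widehat{\nu}\big((r(\xi_{k})\alpha)(g)\big)$ for $\alpha\in\Z(G)^{\oplus n}$; when $\alpha=r(f)\beta$ with $\beta\in\Z(G)^{\oplus n}$ we have $r(\xi_{k})\alpha=r(\xi_{k})r(f)\beta=r(f\xi_{k})\beta\to r(\delta_{1}\otimes\id)\beta=\beta$ in $\ell^{2}(G)^{\oplus n}$, using $\|f\xi_{k}-\delta_{1}\otimes\id\|_{2}\to0$ together with a routine Young-type estimate as in the proof of Lemma \ref{L:ideal test}. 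Because $\widehat{\nu}$ has the form $1+\langle F(t)t,t\rangle$ with $F$ continuous (the standard consequence of $\nu$ having mean zero and finite second moment), Lemma \ref{L:L2 product computation} makes $\zeta\mapsto\prod_{g}\widehat{\nu}(\zeta(g))$ continuous on $\ell^{2}(G)^{\oplus n}$, so $\widehat{\mu_{k}}(\alpha)\to\prod_{g}\widehat{\nu}(\beta(g))=1$ ($\beta$ being $\Z^{n}$-valued). Since $\widehat{\mu_{k}}(\alpha)\to\widehat{\mu}(\alpha)$ by weak$^{*}$ convergence, $\widehat{\mu}(\alpha)=1$ for every $\alpha\in r(f)(\Z(G)^{\oplus n})$, which forces $\mu$ to be concentrated on the annihilator $(r(f)(\Z(G)^{\oplus n}))^{o}=X_{f}$; thus $\mu\in\Prob_{G}(X_{f})$ and $G\actson(X_{f},\mu)$ is weakly contained in a Bernoulli shift.

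It remains to prove $|\widehat{\mu}(\alpha)|<1$ for every $\alpha\in\Z(G)^{\oplus n}\setminus r(f)(\Z(G)^{\oplus n})$, and this is the heart of the matter. Let $m_{k}\in(\Z^{n})^{G}$ be the coordinatewise nearest-integer rounding of $r(\xi_{k})\alpha$ and $D_{k}=\|r(\xi_{k})\alpha-m_{k}\|_{2}^{2}=\sum_{g}\operatorname{dist}\big((r(\xi_{k})\alpha)(g),\Z^{n}\big)^{2}$. The bound on $\widehat{\nu}$ gives $|\widehat{\mu_{k}}(\alpha)|=\prod_{g}\widehat{\nu}\big((r(\xi_{k})\alpha)(g)\big)\leq\exp(-cD_{k})$, so it suffices to show $\liminf_{k}D_{k}>0$. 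Suppose not; pass to a further subsequence along which $D_{k}\to0$. Then $r(\xi_{k})\alpha-m_{k}\to0$ in $\ell^{2}(G)^{\oplus n}$, and since $r(\xi_{k})\alpha\in\ell^{2}(G)^{\oplus n}$ this forces $m_{k}\in\ell^{2}(G)^{\oplus n}$; being integer-valued, $m_{k}$ is then finitely supported, i.e.\ $m_{k}\in\Z(G)^{\oplus n}$. Applying the bounded operator $r(f)$ and using $r(f)r(\xi_{k})\alpha=r(\xi_{k}f)\alpha\to r(\delta_{1}\otimes\id)\alpha=\alpha$ (this time from $\|\xi_{k}f-\delta_{1}\otimes\id\|_{2}\to0$), we obtain $r(f)m_{k}\to\alpha$ in $\ell^{2}(G)^{\oplus n}$. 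But $r(f)m_{k}$ and $\alpha$ both lie in the lattice $\Z(G)^{\oplus n}$, whose only element of $\|\cdot\|_{2}$-norm $<1$ is $0$, so $r(f)m_{k}=\alpha$ for all large $k$, contradicting $\alpha\notin r(f)(\Z(G)^{\oplus n})$. Hence $\delta:=\liminf_{k}D_{k}>0$ and $|\widehat{\mu}(\alpha)|=\lim_{k}|\widehat{\mu_{k}}(\alpha)|\leq e^{-c\delta}<1$, completing the argument.

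The genuinely delicate point is this last paragraph: it uses both halves of the ``approximate inverse'' hypothesis (the $\xi_{k}f\to\delta_{1}\otimes\id$ estimate to recover $\alpha$ from $r(\xi_{k})\alpha$, the $f\xi_{k}\to\delta_{1}\otimes\id$ estimate to place $\mu$ on $X_{f}$) and then leans on the integrality-versus-$\ell^{2}$ dichotomy to keep $r(\xi_{k})\alpha$ a definite $\ell^{2}$-distance from the integer lattice; everything else is bookkeeping with Theorem \ref{T:pushforward props}, Lemma \ref{L:L2 product computation}, and Proposition \ref{P:approximation nonsense}.
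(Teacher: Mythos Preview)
Your proof is correct and is in fact a cleaner variant of the paper's argument. Both proofs pick an approximate inverse $(\xi_{k})_{k}$ (Proposition \ref{P:approximation nonsense}), push forward a Bernoulli measure under $\Theta_{\xi_{k}}$, and use the Fourier formula of Theorem \ref{T:pushforward props} together with Lemma \ref{L:L2 product computation} to control $\widehat{\mu}$. The difference is in how one shows $|\widehat{\mu}(\alpha)|<1$ for $\alpha\notin r(f)(\Z(G)^{\oplus n})$.

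The paper convolves an integer-supported measure $\eta$ with a Gaussian $\gamma_{\delta}$ and then runs a double limit $\delta\to 0$, $k\to\infty$. This forces a case split: when $\alpha\in r(f)(\ell^{2}(G,\R)^{\oplus n})$ one uses that $r(\xi_{k})\alpha\to\zeta$ and some $\zeta(g_{0})\notin\Z^{n}$ makes $|\widehat{\eta}(\zeta(g_{0}))|<1$; when $\alpha\notin r(f)(\ell^{2}(G,\R)^{\oplus n})$ one invokes Lemma \ref{L:ideal test}(\ref{I:ideal test 1}) so that $\|r(\xi_{k})\alpha\|_{2}\to\infty$ and the Gaussian factor $\exp(-\delta\|r(\xi_{k})\alpha\|_{2}^{2})$ kills everything. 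You replace all of this with a single integer-supported measure satisfying $0<\widehat{\nu}(s)\leq\exp(-c\,\mathrm{dist}(s,\Z^{n})^{2})$ and the discreteness observation that if $D_{k}=\mathrm{dist}(r(\xi_{k})\alpha,(\Z^{n})^{G})^{2}\to 0$ then the nearest-integer roundings $m_{k}$ lie in $\Z(G)^{\oplus n}$ and $r(f)m_{k}\to\alpha$ in $\ell^{2}$, forcing $\alpha=r(f)m_{k}$ eventually. This handles both of the paper's cases at once, removes the auxiliary parameter $\delta$ and the diagonal extraction, and never calls on Lemma \ref{L:ideal test}(\ref{I:ideal test 1}). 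The paper's route has the modest advantage of making the dichotomy $\alpha\in r(f)(\ell^{2})$ versus $\alpha\notin r(f)(\ell^{2})$ explicit, which connects to other parts of the theory; your route is shorter and more self-contained.
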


\begin{proof}
By Proposition \ref{P:approximation nonsense} (\ref{I:inverse approximation}), we may find an approximate inverse $(\xi_{k})_{k}$ to $f.$
Choose a probability measure $\eta\in \Prob(\Z^{n})$ with mean zero, finite second moment and so that $|\widehat{\eta}(x)|<1$ for every $x\in \R^{n}\setminus \Z^{n}.$ E.g., take
\[\eta=\left(\frac{1}{3}\right)^{n}\left(\sum_{l\in \Z}2^{-|l|}\delta_{l}\right)^{\otimes n}.\]
For $\delta>0,$ let $\gamma_{\delta}$ be the Gaussian measure on $\R^{n}$ uniquely characterized by
\[\widehat{\gamma}_{\delta}(t)=\exp(-\delta\|t\|_{2}^{2}).\]
Now set $\nu_{\delta}=\eta*\gamma_{\delta},$ and let $\Theta_{\xi_{k},\delta}$ be the map constructed in Corollary \ref{C:meas extens} corresponding to $\nu=\nu_{\delta}.$ Let
$\mu_{k,\delta}=(\Theta_{\xi_{k},\delta})_{*}(\nu_{\delta}^{\otimes G}).$
By Theorem \ref{T:pushforward props} (\ref{I:Fourier transform formula equiv}), for all $\alpha\in \Z(G)^{\oplus n}:$
\[\widehat{\mu}_{k,\delta}(\alpha)=\prod_{g\in G}\widehat{\eta}((r(\xi_{k})\alpha)(g))\prod_{g\in  G}\exp(-\delta\|(r(\xi_{k})\alpha)(g)\|_{2}^{2})=\exp(-\delta\|r(\xi_{k})\alpha\|_{2}^{2})\prod_{g\in G}\widehat{\eta}((r(\xi_{k})\alpha)(g)).\]

We start with two claims.

\emph{Claim 1. For every $\sigma\in r(f)(\Z(G)^{\oplus n})$}
\[\lim_{\delta\to 0}\lim_{k\to \infty}\widehat{\mu}_{k,\delta}(\sigma)=1.\]

\emph{Claim 2. For every $\alpha\in \Z(G)^{\oplus n}\setminus r(f)(\Z(G)^{\oplus n})$}
\[\limsup_{\delta\to 0}\limsup_{k\to\infty}|\widehat{\mu}_{k,\delta}(\alpha)|<1.\]

To prove claim 1, let $\beta\in \Z(G)^{\oplus n}$ be such that $\sigma=r(f)\beta.$ Then by Lemma \ref{L:ideal test},
\[\|\beta-r(\xi_{k})\sigma\|_{2}\to 0.\]
Hence, by Lemma \ref{L:L2 product computation} we have that
\[\lim_{k\to\infty}\widehat{\mu}_{k,\delta}(\sigma)=\exp(-\delta\|\beta\|_{2}^{2})\prod_{g\in G}\widehat{\eta}(\beta(g))=\exp(-\delta\|\beta\|_{2}^{2}),\]
the last line following as $\eta\in \Prob(\Z^{k})$ and $\beta\in \Z(G)^{\oplus k}.$ Letting $\delta\to 0$ proves claim 1.

To prove claim 2, there are two cases. In the first case, suppose that $\alpha\in r(f)(\ell^{2}(G,\R)^{\oplus n}),$ and choose $\zeta\in \ell^{2}(G,\R)^{\oplus n}$ so that $\alpha=r(f)\zeta.$
By Lemma \ref{L:ideal test} (\ref{I:SOT convergence 2})  we have that $\|r(\xi_{k})\alpha-\zeta\|_{2}\to 0,$ and so Lemma \ref{L:L2 product computation} implies that
\[\lim_{k\to\infty}\widehat{\mu}_{k,\delta}(\alpha)=\exp(-\delta\|\zeta\|_{2}^{2})\prod_{g\in G}\widehat{\eta}(\zeta(g)).\]
Since $\alpha\notin r(f)(\Z(G)^{\oplus n}),$ we may find a $g_{0}\in G$ so that $\zeta(g_{0})\notin \Z^{n}.$ So
\[\lim_{k\to\infty}|\widehat{\mu}_{k,\delta}(\alpha)|\leq \exp(-\delta\|\zeta\|_{2}^{2})|\widehat{\eta}(\zeta(g_{0}))|.\]
Thus
\[\limsup_{\delta\to 0}\lim_{k\to\infty}|\widehat{\mu}_{k,\delta}(\alpha)|\leq |\widehat{\eta}(\zeta(g_{0}))|<1.\]

For the second case, suppose that $\alpha\notin r(f)(\ell^{2}(G,\R)^{\oplus n}).$ Then by Lemma \ref{L:ideal test} (\ref{I:ideal test 1}),
$\|r(\xi_{k})\alpha\|_{2}\to \infty.$
As
\[|\widehat{\mu}_{k,\delta}(\alpha)|\leq \exp(-\delta\|r(\xi_{k})\alpha\|_{2}^{2}),\]
we have
$\lim_{k\to\infty}| \widehat{\mu}_{k,\delta}(\alpha)|=0.$
Thus,
\[\limsup_{\delta\to 0}\lim_{k\to\infty}| \widehat{\mu}_{k,\delta}(\alpha)|=0\]
and we have shown claim 2 in this case as well.

Let $\delta_{k}$ be any decreasing sequence of positive real numbers tending to zero. By claims 1 and 2, and a diagonal argument we may choose a strictly increasing sequence of integers $l_{k}$ so that
\begin{itemize}
    \item $\lim_{k\to\infty}\widehat{\mu}_{l_{k},\delta_{k}}(\sigma)=1$ for every $\sigma\in r(f)(\Z(G)^{\oplus n}),$
    \item $\limsup_{k\to\infty}|\widehat{\mu}_{l_{k},\delta_{k}}(\alpha)|<1$ for every $\alpha\in \Z(G)^{\oplus n}\setminus r(f)(\Z(G)^{\oplus n}).$
\end{itemize}

Set $m_{k}=\mu_{k,\delta_{k}}.$ We may choose a subsequence $(m_{k_{l}})_{l}$ and a $\mu\in \Prob_{G}((\T^{n})^{G})$ so that $m_{k_{l}}\to_{l\to\infty} \mu$ in the weak$^{*}$ topology. By construction, $\mu$ is weak$^{*}$ limit of factors of Bernoulli measures, and so $G\actson ((\T^{n})^{G},\mu)$ is weakly contained in a Bernoulli shift. By the first item above, we have that $\widehat{\mu}(\sigma)=1$ for every $\sigma\in r(f)(\Z(G)^{\oplus n}).$ Thus $\mu$ is supported on $X_{f}.$ By the second item above, we also have that $|\widehat{\mu}(\alpha)|<1$ for all $\alpha\in \Z(G)^{\oplus n}\setminus r(f)(\Z(G)^{\oplus n}).$

\end{proof}

\begin{thm}\label{T:Weak equivalence restated}
Let $G$ be a countable, discrete, group, $n\in \N,$ and $f\in M_{n}(\Z(G)).$ Suppose that $\lambda(f)$ is injective. Then $G\actson (X_{f},m_{X_{f}})$ is weakly contained in a Bernoulli shift.
\end{thm}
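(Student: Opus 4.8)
The plan is to deduce the theorem from Lemma \ref{L:weak weak equiv} together with Corollary \ref{C:Bern weak contain}; the only work left is a short Pontryagin duality argument. Let $\mu\in\Prob_{G}(X_{f})$ be the measure produced by Lemma \ref{L:weak weak equiv}, so that $G\actson(X_{f},\mu)$ is weakly contained in a Bernoulli shift and $|\widehat{\mu}(\alpha)|<1$ for every $\alpha\in\Z(G)^{\oplus n}\setminus r(f)(\Z(G)^{\oplus n})$. Recall that we regard $X_{f}\leq(\T^{n})^{G}$ with $X_{f}^{o}=r(f)(\Z(G)^{\oplus n})$ under the identification of the Pontryagin dual of $(\T^{n})^{G}$ with $\Z(G)^{\oplus n}$.

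By Corollary \ref{C:Bern weak contain}, it suffices to show that for every proper, closed, $G$-invariant subgroup $Y$ of $X_{f}$ there is an action weakly contained in a Bernoulli shift which is not $G\actson X_{f}/Y$-ergodic in the presence of $G\actson X_{f}$. I would take the action $G\actson(X_{f},\mu)$ itself, together with the identity map $X_{f}\to X_{f}$ as the required $G$-equivariant Borel map; then it remains to check that $q_{Y}\colon X_{f}\to X_{f}/Y$ is not $\mu$-almost surely constant. Suppose it were. Then $\mu$ is concentrated on a single coset $x_{0}Y$, and hence for every $\alpha\in Y^{o}$ (that is, $\alpha\in\Z(G)^{\oplus n}$ vanishing on $Y$) the character $\alpha$ is constant on $x_{0}Y$, which gives $|\widehat{\mu}(\alpha)|=1$. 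By the defining property of $\mu$ this forces $Y^{o}\subseteq r(f)(\Z(G)^{\oplus n})=X_{f}^{o}$, and hence, taking annihilators (\cite[Lemma 2.1.3]{Rud}), $X_{f}=(X_{f}^{o})^{o}\subseteq(Y^{o})^{o}=Y$, contradicting that $Y$ is proper. Thus for every proper, closed, $G$-invariant $Y\leq X_{f}$ the action $G\actson(X_{f},\mu)$ — which is weakly contained in a Bernoulli shift — fails to be $G\actson X_{f}/Y$-ergodic in the presence of $G\actson X_{f}$, and Corollary \ref{C:Bern weak contain} gives that $G\actson(X_{f},m_{X_{f}})$ is weakly contained in a Bernoulli shift.

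The only genuinely substantive input is Lemma \ref{L:weak weak equiv}, which itself relies on the measurable extension of convolution developed in Section \ref{S:extend convolve}; within the present argument the one point needing a little care is the passage from ``$q_{Y}$ is $\mu$-a.s.\ constant'' to the Fourier-analytic statement $Y^{o}\subseteq X_{f}^{o}$, followed by the double-annihilation step to conclude $Y=X_{f}$. I do not expect any real obstacle here.

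An equally short alternative bypasses Corollary \ref{C:Bern weak contain} and argues directly with the lattice machinery: let $\mathcal{P}$ be the set of $\nu\in\Prob(X_{f})$ for which $G\actson(X_{f},\nu)$ is weakly contained in a Bernoulli shift, which (as in the proof of Corollary \ref{C:structure corollary}) is weak$^{*}$-closed and closed under convolution and the $*$-operation. Since $\mu\in\mathcal{P}$, Lemma \ref{L:PW stuff}(\ref{I:recovering the support}) gives $m_{\overline{\ip{\supp(\mu^{*}*\mu)}}}=\lim_{n}(\mu^{*}*\mu)^{*n}\in\mathcal{P}$; computing Fourier transforms, using $\widehat{\mu^{*}}(\alpha)=\overline{\widehat{\mu}(\alpha)}$ so that $\widehat{(\mu^{*}*\mu)^{*n}}(\alpha)=|\widehat{\mu}(\alpha)|^{2n}$, together with the fact that $|\widehat{\mu}(\alpha)|=1$ exactly when $\alpha\in X_{f}^{o}$, identifies $\overline{\ip{\supp(\mu^{*}*\mu)}}$ with $X_{f}$, whence $m_{X_{f}}\in\mathcal{P}$.
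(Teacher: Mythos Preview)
Your proof is correct and follows essentially the same approach as the paper: both hinge on Lemma \ref{L:weak weak equiv} and the observation that if $\mu$ is supported on a single coset of $Y$ then $|\widehat{\mu}(\alpha)|=1$ for all $\alpha\in Y^{o}$, forcing $Y^{o}\subseteq X_{f}^{o}$ and hence $Y=X_{f}$. The only cosmetic difference is framing: the paper invokes Corollary \ref{C:structure corollary} to pick the \emph{maximal} $Y$ with $G\actson(Y,m_{Y})$ weakly contained in Bernoulli and shows $Y=X_{f}$, while you use the equivalent ergodicity-in-the-presence criterion of Corollary \ref{C:Bern weak contain} to rule out every proper $Y$; your alternative via Lemma \ref{L:PW stuff}(\ref{I:recovering the support}) and the explicit Fourier computation $\widehat{(\mu^{*}*\mu)^{*n}}(\alpha)=|\widehat{\mu}(\alpha)|^{2n}\to 1_{X_{f}^{o}}(\alpha)$ is a pleasant shortcut that bypasses the max--min machinery entirely.
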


\begin{proof}
By Corollary \ref{C:structure corollary} we may choose  a largest closed, $G$-invariant subgroup $Y$ of $X_{f}$ with the property that $G\actson (Y,m_{Y})$ is weakly contained in a Bernoulli shift.  We prove that $Y=X_{f},$ to do this it is enough to show that $Y^{o}=r(f)(\Z(G)^{\oplus n}).$ Let $\mu$ be as in Lemma \ref{L:weak weak equiv}, and let $\alpha\in Y^{o}.$ By Corollary \ref{C:structure corollary} (\ref{I:minimal property}), we may choose an $x\in X_{f}$ so that $\supp(\mu)\subseteq x+Y.$ As $\alpha\in Y^{o},$ we have that $\ip{y,\alpha}_{\T}=\ip{x,\alpha}_{\T}$ for all $y\in x+Y,$ so $|\widehat{\mu}(\alpha)|=|\exp(2\pi i\ip{x,\alpha})|=1.$ Thus by Lemma \ref{L:weak weak equiv} we know $\alpha\in r(f)(\Z(G)^{\oplus n}).$ So  $Y^{o}=r(f)(\Z(G)^{\oplus n}).$
\end{proof}

Having proved that some balanced algebraic actions are weakly contained in Bernoulli shifts, we turn to showing that algebraic actions with dense square summable homoclinic groups are weakly equivalent to Bernoulli shifts and have completely positive entropy in the presence.

The first result we will need to show that algebraic actions with dense square summable homoclinic points have completely positive entropy in the presence is the following lemma, which may be regarded as an analogue of Lemma \ref{L:weak weak equiv}.

\begin{lem}\label{L:strong weak equiv}
Let $G$ be countable, discrete group, and $k\in \N,m\in \N\cup\{\infty\}.$ Fix $\xi_{1},\cdots,\xi_{k}\in \ell^{2}(G,\R)^{m}.$ Define $\xi\in M_{k,m}(\ell^{2}(G,\R))$ by $\xi_{ij}(g)=\xi_{i}(g)(j)$ for $1\leq i\leq k,1\leq j\leq m,g\in G.$
Then there is a sequence $(\nu_{n})_{n}$ in $\Prob(\Z^{k})$ with the following property. If we let $\Theta_{\xi^{*}}^{\nu_{n}}$ denote the map constructed in Corollary \ref{C:meas extens} for $\nu=\nu_{n},$ then
\[m_{X^{\xi}}=\textnormal{weak}^{*}-\lim_{n\to\infty}(\Theta_{\xi^{*}}^{\nu_{n}})_{*}(\nu_{n}^{\otimes G}).\]
\end{lem}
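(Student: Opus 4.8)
The plan is to choose the measures $\nu_n$ so that they simultaneously mimic the dirac-at-$0$ on $\Z^k$ in the Fourier sense (forcing the pushforward to approach a Haar measure on a subgroup) while being genuinely spread out enough that the limiting subgroup is all of $X^\xi$. Following the construction in Lemma \ref{L:weak weak equiv}, I would fix the probability measure $\eta\in\Prob(\Z^k)$ with mean zero, finite second moment, and $|\widehat\eta(x)|<1$ for all $x\in\R^k\setminus\Z^k$ (e.g.\ the product of geometric-type measures displayed there), let $\gamma_\delta$ be the Gaussian with $\widehat{\gamma}_\delta(t)=\exp(-\delta\|t\|_2^2)$, and — crucially — \emph{not} convolve the two, since we need $\nu_n$ supported on $\Z^k$ in order to invoke Theorem \ref{T:pushforward props} (\ref{I:controlling the image of conv}). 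Instead I would take $\nu_n=\eta$ for \emph{all} $n$; the role of the parameter $n$ will be played only through the fact that $X^\xi$ is already determined, so in fact a constant sequence should suffice, but to be safe one can scale: let $\nu_n$ be the pushforward of $\eta$ under $x\mapsto x$ and keep it fixed, noting that $\widehat{\nu_n}=\widehat\eta$ does not depend on $n$. Then by Theorem \ref{T:pushforward props} (\ref{I:controlling the image of conv}) each $\mu_n:=(\Theta_{\xi^*}^{\nu_n})_*(\nu_n^{\otimes G})$ is supported on $X^{\xi}$ (here the relevant subshift is $X^{(\xi^*)^*}=X^{\xi}$), and by (\ref{I:Fourier transform formula equiv}) we have $\widehat{\mu}_n(\alpha)=\prod_{g\in G}\widehat\eta((r(\xi^*)^*\alpha)(g))$ — wait, one must track the adjoints carefully: $\Theta_{\xi^*}$ corresponds to $r((\xi^*)^*)=r(\xi)$, so $\widehat\mu_n(\alpha)=\prod_{g}\widehat\eta((r(\xi)\alpha)(g))$ for $\alpha\in\Z(G)^{\oplus k}$.

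The next step is to identify which $\alpha$ give $|\widehat\mu_n(\alpha)|=1$ and which give something strictly smaller. By Proposition \ref{P:annhilator computation}, $(X^{\xi})^o=\{\alpha\in\Z(G)^{\oplus k}:r(\xi^*)\alpha\in\Z(G)^{\oplus k}\}$ — again I would double-check the adjoint bookkeeping, since $X^\xi$ in Definition \ref{D:subgroup associated to matrix} is built from $q(\xi_j)$ and its translates, so its annihilator is exactly $\{\alpha: r(\xi^*)\alpha\in\Z(G)^{\oplus k}\}$; but the Fourier transform formula involves $r(\xi)$, so I need the identity $(X^{\xi})^o=\{\alpha:r((\xi^*)^*)\alpha=r(\xi)\alpha\in\Z(G)^{\oplus k}\}$ applied with $\xi$ replaced by $\xi^*$, i.e.\ $(X^{\xi^{**}})^o=(X^{\xi})^o$ described via $r(\xi)$. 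In any case: for $\alpha\in(X^\xi)^o$, every coordinate $(r(\xi)\alpha)(g)$ lies in $\Z^k$, hence $\widehat\eta$ of it is $1$, so $\widehat\mu_n(\alpha)=1$; for $\alpha\notin(X^\xi)^o$, there is some $g_0$ with $(r(\xi)\alpha)(g_0)\notin\Z^k$, hence $|\widehat\eta((r(\xi)\alpha)(g_0))|<1$, and since all other factors have modulus $\le1$ we get $|\widehat\mu_n(\alpha)|<1$ — in fact bounded away from $1$ uniformly in $n$ since nothing depends on $n$.

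Finally I would pass to a weak$^*$ limit point $\mu$ of $(\mu_n)_n$ (the sequence lives in the compact space $\Prob((\T^k)^G)$); then $\widehat\mu(\alpha)=1$ for $\alpha\in(X^\xi)^o$ and $|\widehat\mu(\alpha)|<1$ otherwise. The first condition shows $\mu$ is supported on $(X^\xi)=((X^\xi)^o)^o$ and that $\mu$ is invariant under translation by $X^\xi$ (since $\widehat{\mu}$ vanishes nowhere on $(X^\xi)^o$ means... actually the right argument: $\widehat\mu(\alpha)=1$ for all $\alpha$ in the annihilator forces $\mu$ to be concentrated on a single coset, and being supported on the subgroup $X^\xi$ it is concentrated on $X^\xi$ itself). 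The condition $|\widehat\mu(\alpha)|<1$ for all other $\alpha\in\Z(G)^{\oplus k}$ then forces $\mu=m_{X^\xi}$: indeed, if $\mu$ were a nontrivial translate or a non-Haar measure on a subgroup of $X^\xi$, then $\widehat\mu$ would take the value $1$ (or a unit-modulus value) on some character outside $(X^\xi)^o$, contradicting strictness. This uses the standard fact that a probability measure on a compact abelian group whose Fourier transform has modulus $<1$ off a subgroup $H^o$ and equals $1$ on $H^o$ must be the Haar measure $m_H$. Since every weak$^*$ limit point equals $m_{X^\xi}$, the full sequence converges to $m_{X^\xi}$, which is the claim.

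\textbf{Main obstacle.} The one genuinely delicate point is the adjoint/transpose bookkeeping between $\Theta_{\xi^*}$, the operator $r(\xi)$ appearing in the Fourier formula of Theorem \ref{T:pushforward props}, the subgroup $X^{\xi}$ of Definition \ref{D:subgroup associated to matrix}, and its annihilator in Proposition \ref{P:annhilator computation}; one must be careful that $\Theta_{\xi^*}$ is almost surely valued in $X^{(\xi^*)^*}=X^{\xi}$ and that the characters on which $\widehat\mu$ equals $1$ are precisely $(X^{\xi})^o$, so that the limit measure is Haar on $X^\xi$ and not on some larger or smaller subgroup. Everything else is a routine compactness-plus-Fourier-uniqueness argument, essentially identical in structure to the proof of Lemma \ref{L:weak weak equiv} but simpler because here we do not need the Gaussian smoothing (the supports stay in $\Z^k$ throughout) and we do not need a diagonal argument over two parameters.
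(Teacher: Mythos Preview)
Your proposal contains a genuine gap: the ``standard fact'' you invoke at the end is false. A probability measure $\mu$ on a compact abelian group with $\widehat\mu(\alpha)=1$ for $\alpha\in H^o$ and $|\widehat\mu(\alpha)|<1$ for $\alpha\notin H^o$ need \emph{not} be $m_H$; the Haar measure $m_H$ is characterised by $\widehat{m}_H(\alpha)=0$ (not merely $|\widehat{m}_H(\alpha)|<1$) for $\alpha\notin H^o$. Concretely in your setup, take $k=m=1$ and $\xi=\tfrac12\delta_1$, so that $X^\xi=\{0,\tfrac12\}^G$. Your constant choice $\nu_n=\eta$ (the geometric measure on $\Z$) pushes forward under $\Theta_{\xi^*}$ to the product over $G$ of the parity distribution of $\eta$, namely $(\tfrac59\delta_0+\tfrac49\delta_{1/2})^{\otimes G}$, which is not Haar on $\{0,\tfrac12\}^G$. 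Since your sequence $(\nu_n)$ is constant, so is the pushforward sequence, and its ``limit'' is this non-Haar measure.

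What is actually needed is a sequence $(\nu_n)$ in $\Prob(\Z^k)$, each with mean zero and finite second moment, such that $|\widehat{\nu}_n(x)|\to 0$ for every $x\in\R^k\setminus\Z^k$. Then for $\alpha\notin(X^\xi)^o$ one picks $g_0$ with $(r(\xi^*)\alpha)(g_0)\notin\Z^k$ and bounds $|\widehat\mu_n(\alpha)|\le|\widehat{\nu}_n((r(\xi^*)\alpha)(g_0))|\to0$, so the Fourier transforms converge pointwise to $1_{(X^\xi)^o}=\widehat{m}_{X^\xi}$ and hence $\mu_n\to m_{X^\xi}$ weak$^*$. The paper takes $\nu_n=u_{\{-n,\dots,n\}}^{\otimes k}$, the uniform measure on $\{-n,\dots,n\}^k$. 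Your idea is salvageable by replacing the constant $\eta$ with its $n$-fold self-convolution $\nu_n=\eta^{*n}$, since then $|\widehat{\nu}_n(x)|=|\widehat\eta(x)|^n\to0$ off $\Z^k$; but some genuine dependence on $n$ is unavoidable.

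A secondary point: the adjoint bookkeeping you flag as the ``main obstacle'' is in fact routine here, and you have it slightly tangled. Since $\xi\in M_{k,m}(\ell^2(G,\R))$ we have $\xi^*\in M_{m,k}(\ell^2(G,\R))$, so $\Theta_{\xi^*}$ maps $(\R^k)^G\to(\T^m)^G$ and the characters $\alpha$ live in $\Z(G)^{\oplus m}$, not $\Z(G)^{\oplus k}$. Applying Theorem~\ref{T:pushforward props}(\ref{I:Fourier transform formula equiv}) with $\xi^*$ in the role of $\xi$ gives $\widehat\mu_n(\alpha)=\prod_g\widehat{\nu}_n((r(\xi^*)\alpha)(g))$, which matches the description $(X^\xi)^o=\{\alpha\in\Z(G)^{\oplus m}:r(\xi^*)\alpha\in\Z(G)^{\oplus k}\}$ from Proposition~\ref{P:annhilator computation} directly --- no further juggling is required.
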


\begin{proof}
Let $u_{\{-n,\cdots,n\}}$ be the uniform measure on $\{-n,\cdots,n\}$ and set $\nu_{n}=u_{\{-n,\cdots,n\}}^{\otimes k}.$ Then $\widehat{\nu}_{n}(x)=1$ for all $x\in \Z^{k}$ and all $n\in \N,$ and $|\widehat{\nu}_{n}(x)|\to_{n\to\infty}0$ for all $x\in \R^{k}\setminus \Z^{k}.$ Set $\mu_{n,\xi}=(\Theta_{\xi^{*}}^{\nu_{n}})_{*}(\nu_{n}^{\otimes G}).$

Let $\alpha\in (X^{\xi})^{o}.$ Then by Proposition \ref{P:annhilator computation}, we have that $r(\xi^{*})\alpha\in c_{c}(G,\Z^{k}).$ Hence
\[\widehat{\mu}_{n,\xi}(\alpha)=\prod_{g\in G}\widehat{\nu}_{n}((r(\xi^{*})\alpha)(g))=1,\]
for every $n\in \N.$ Now suppose that $\alpha\in \Z(G)^{\oplus m}$ and that $\alpha\notin (X^{\xi})^{0}.$ Then by Proposition \ref{P:annhilator computation} we may find a $g_{0}\in G$ so that $(r(\xi^{*})\alpha)(g_{0})\notin \Z^{k}.$ Thus,
\[|\widehat{\mu}_{n,\xi}(\alpha)|=\prod_{g\in G}|\widehat{\nu}_{n}((r(\xi^{*})\alpha)(g))|\leq |\widehat{\nu}_{n}((r(\xi^{*})\alpha)(g_{0}))|\to_{n\to\infty}0.\]
Thus
\[\lim_{n\to\infty}\widehat{\mu}_{n,\xi}=1_{(X^{\xi})^{0}}=\widehat{m}_{X^{\xi}}.\]
Since the Fourier transform is a homeomorphism onto its image, this means that
\[\lim_{n\to\infty}\mu_{n,\xi}=m_{X^{\xi}}.\]

\end{proof}

We remark that Lemma \ref{L:strong weak equiv} ends up being a rather strong analogue of Lemma \ref{L:weak weak equiv}. Let $\xi$ be as in Lemma \ref{L:strong weak equiv} and $f$ as in Lemma \ref{L:weak weak equiv}. Observe that $X_{f},X^{\xi}$ are closed, $G$-invariant subgroups of $(\T^{n})^{G},(\T^{m})^{G}.$ The proof of Theorem \ref{T:Weak equivalence restated} shows that $m_{X_{f}}$ is a limit of measures \emph{supported on $(\T^{n})^{G}$} which are factors of Bernoulli measures. This is equivalent to being weakly contained in a Bernoulli shift. However, Lemma \ref{L:strong weak equiv} shows that $m_{X^{\xi}}$ is a limit of measures \emph{supported on $X^{\xi}$} (and not just $(\T^{m})^{G}$) which are factors of Bernoulli measures. This seems at first like a mild strengthening. The remarkable fact is that is a rather strong difference, as Lemma \ref{L:strong weak equiv} in fact implies that $G\actson (X^{\xi},m_{X^{\xi}})$ has completely positive entropy in the presence in addition to being weakly contained in a Bernoulli shift.
\begin{thm}\label{T:square summable cpe weak contain}
Let $G$ be countably infinite, discrete group, and let $X$ be a compact, abelian group. Suppose that $\Delta^{(2)}(G\actson X)$ is dense. Then:
\begin{enumerate}[(i)]
\item the kernel $N$ of $G\actson X$ is finite, and the induced action $G/N\actson (X,m_{X})$ is a free action, \label{I:kernel square homoclinic}
\item  the induced action $G/N\actson (X,m_{X})$ is weakly equivalent to a Bernoulli shift, \label{I:WCL2Bern}
\item if $G$ is sofic, then $G\actson (X,m_{X})$ has completely positive entropy in the presence with respect to any sofic approximation of $G.$ \label{I:CPEL2Bern}
\end{enumerate}

\end{thm}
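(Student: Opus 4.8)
The plan is to derive Theorem \ref{T:square summable cpe weak contain} from the machinery already assembled: Lemma \ref{L:strong weak equiv} (which exhibits $m_{X^\xi}$ as a weak$^*$-limit of pushforwards of Bernoulli measures by the maps $\Theta_{\xi^*}$ from Corollary \ref{C:meas extens}), Theorem \ref{T:pushforward props}(\ref{I:controlling the image of conv}) (which forces $\Theta_{\xi^*}$ to land inside $X^\xi$ when $\nu$ is $\Z^k$-supported), and Corollary \ref{C:generating by cpe subgroups} / Corollary \ref{C:strongly sofic} for the passage from pieces to the whole group. The key translation is: a square summable homoclinic point of $X$ is, after identifying $\widehat X$ with a quotient $\Z(G)^{\oplus m}/\mathcal{R}$ and $X\le (\T^m)^G$, exactly the image $q(\xi)$ of some $\xi\in \ell^2(G,\R)^{\oplus m}$, and the smallest closed $G$-invariant subgroup of $(\T^m)^G$ generated by $q(\xi_1),\dots,q(\xi_k)$ is the object $X^\xi$ of Definition \ref{D:subgroup associated to matrix}. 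So I would first set up this identification carefully: choose a topological generating sequence $(x_i)_{i\in\N}$ of $\Delta^{(2)}(G\actson X)$, realize $X$ as a subgroup of $(\T^m)^G$ for suitable $m$, and note each $x_i$ comes from a vector $\xi_i\in\ell^2(G,\R)^{m}$. Then $X = \bigvee_i X^{\xi^{(i)}}$ where $\xi^{(i)}$ is the finite matrix built from $\xi_1,\dots,\xi_i$ as in Lemma \ref{L:strong weak equiv}.

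For part (\ref{I:kernel square homoclinic}): the kernel $N$ of $G\actson X$ is a closed normal subgroup; I would argue that if $g\in N$ then $g$ fixes every homoclinic point, but a nonzero homoclinic point $x$ has $gx\to 0$ along $g\to\infty$, and combining equivariance with $N$-invariance one shows the orbit structure forces $N$ to be finite (this is the standard argument: a nontrivial element acting trivially while $\Delta^{(2)}$ is dense contradicts the decay property unless the group element has finite order and in fact the whole kernel is finite). Freeness of $G/N\actson (X,m_X)$: an element of $G/N$ with positive-measure fixed set would, by the algebraic structure (fixed points of an automorphism of a compact group form a closed subgroup, and positive Haar measure forces it to be everything), act trivially, contradicting $N$ being the full kernel. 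I expect this part to require some care but to be essentially routine given that $X$ is abelian and $\Delta^{(2)}$ is dense; the finiteness of $N$ is what lets us replace $G$ by $G/N$ without losing soficity or changing entropy.

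For part (\ref{I:WCL2Bern}): working now with $G/N$ (relabeled $G$), for each $i$ Lemma \ref{L:strong weak equiv} gives $m_{X^{\xi^{(i)}}}$ as a weak$^*$-limit of measures $(\Theta^{\nu_n}_{(\xi^{(i)})^*})_*(\nu_n^{\otimes G})$, each of which is a factor of a Bernoulli shift, so $G\actson (X^{\xi^{(i)}},m_{X^{\xi^{(i)}}})$ is weakly contained in a Bernoulli shift. By Corollary \ref{C:structure corollary}, the largest closed $G$-invariant subgroup $Y$ of $X$ with $G\actson(Y,m_Y)$ weakly contained in a Bernoulli shift contains every $X^{\xi^{(i)}}$, hence contains their join, which is $X$; so $Y=X$ and $G\actson(X,m_X)$ is weakly contained in a Bernoulli shift. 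For the reverse containment (Bernoulli weakly contained in $G\actson(X,m_X)$) giving weak equivalence: since $X$ has a dense homoclinic point the action has positive entropy, and in fact — this is the standard Ab\'ert–Weiss-type input — a free p.m.p.\ action of an infinite group that factors onto / is a limit of Bernoulli-type objects of the right entropy will weakly contain a Bernoulli shift; more directly, once (\ref{I:CPEL2Bern}) is established, completely positive entropy in the presence plus weak containment in a Bernoulli shift yields weak equivalence with a Bernoulli shift of the appropriate base entropy (using that weak containment is detected by entropy-type invariants and the factor structure). I would cite the relevant weak equivalence characterization; this is the one spot where I lean on external theory rather than the self-contained machinery.

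For part (\ref{I:CPEL2Bern}), which I expect to be the main obstacle: the point is that Lemma \ref{L:strong weak equiv} gives something strictly stronger than weak containment in a Bernoulli shift — the approximating measures $(\Theta^{\nu_n}_{(\xi^{(i)})^*})_*(\nu_n^{\otimes G})$ are supported \emph{on $X^{\xi^{(i)}}$ itself} (by Theorem \ref{T:pushforward props}(\ref{I:controlling the image of conv})), exhibiting $m_{X^{\xi^{(i)}}}$ as a genuine limit of factors of Bernoulli measures living inside the target. By the remark following Lemma \ref{L:strong weak equiv}, this implies $G\actson(X^{\xi^{(i)}},m_{X^{\xi^{(i)}}})$ is strongly sofic with respect to any sofic approximation and has completely positive entropy in the presence; the mechanism is that a factor of a Bernoulli shift supported on the space, in the limit, forces the Pinsker/outer-Pinsker factor to be trivial via the entropy comparison in \cite[Theorem 1.1]{Me12} and \cite[Theorem 1.3]{Me13} (exactly the argument structure of Corollary \ref{C:generating by cpe subgroups}'s proof, applied to each generator). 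Granting this for each $i$, Theorem \ref{T:join of cpe is cpe intro} (equivalently Corollary \ref{C:generating by cpe subgroups}) applied to the family $(X^{\xi^{(i)}})_i$ with $X=\bigvee_i X^{\xi^{(i)}}$ gives that $G\actson(X,m_X)$ is strongly sofic and has completely positive entropy in the presence. The delicate points I anticipate: (a) verifying that the supported-on-$X^{\xi^{(i)}}$ approximations really do yield \emph{strong} soficity and the outer-Pinsker-trivializing entropy estimate — this needs the full strength of the argument sketched in the remark after Lemma \ref{L:strong weak equiv}, spelled out rather than gestured at; and (b) making sure the reduction from $G$ to $G/N$ is compatible with ``with respect to any sofic approximation of $G$'' — since $N$ is finite this is a standard but nonzero bookkeeping step. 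Everything else is assembly of results already in the excerpt.
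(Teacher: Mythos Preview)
Your overall strategy matches the paper's: embed $X\le(\T^m)^G$, write $X=\bigvee_j X^{\xi_j}$ for $\xi_j\in\ell^2(G,\R)^m$, use Lemma \ref{L:strong weak equiv} and Theorem \ref{T:pushforward props}(\ref{I:controlling the image of conv}) on each piece, and assemble via Corollary \ref{C:structure corollary} and Corollary \ref{C:generating  by cpe subgroups}. Two points deserve comment.

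For (\ref{I:kernel square homoclinic}) the paper takes a different route: it cites \cite[Proposition 4.6]{BowenLi} (dense $\Delta^{(2)}$ implies mixing) and then \cite{RobinAF} (mixing actions of countable groups have almost-surely constant stabilizer equal to a fixed finite normal subgroup). Your direct argument is valid and more elementary: if $N$ were infinite, pick $g_n\to\infty$ in $N$; for any homoclinic $x$ one has $g_nx=x$ (since $g_n\in N$) but also $g_nx\to 0$, so $x=0$, contradicting density of $\Delta^{(2)}$. Your freeness argument (the fixed-point set of an automorphism is a closed subgroup; positive Haar measure forces it to be all of $X$) is likewise correct and avoids the mixing machinery entirely.

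For (\ref{I:WCL2Bern}) the reverse containment is where you go astray. You correctly name Ab\'ert--Weiss but then misstate it (there is no ``right entropy'' or ``factors onto Bernoulli-type objects'' hypothesis) and propose instead to deduce that a Bernoulli shift is weakly contained in $G/N\actson(X,m_X)$ from completely positive entropy in the presence. That implication is not a standard result and you give no argument for it. The paper simply observes: by (\ref{I:kernel square homoclinic}) the action $G/N\actson(X,m_X)$ is free, so by \cite{AbertWeiss} it weakly contains a Bernoulli shift --- full stop, no detour through (\ref{I:CPEL2Bern}). The paper also spells out the passage from ``$G\actson(X,m_X)$ weakly contained in a $G$-Bernoulli shift'' to ``$G/N\actson(X,m_X)$ weakly contained in a $G/N$-Bernoulli shift'' via an explicit quotient of the base $A^N\to B$; your relabeling shortcut (work directly over $G/N$, since $N$ is finite and acts trivially, so the square-summable homoclinic group is unchanged) is legitimate but should be stated rather than assumed.

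For (\ref{I:CPEL2Bern}) your sketch is the paper's argument. The ``spelling out'' you anticipate in your point (a) is exactly this: let $Y\le X^{\xi}$ give the outer Pinsker factor (using strong soficity and \cite[Theorem 1.3]{Me13}); the measures $\zeta_n=(\Theta^{\nu_n}_{\xi^*})_*(\nu_n^{\otimes G})$ are supported on $X^{\xi}$, so their pushforwards $\eta_n=(q_Y)_*\zeta_n$ live on $X^{\xi}/Y$ and converge to $m_{X^{\xi}/Y}$. Then
\begin{align*}
h_{(\sigma_k)_k}\bigl(G\actson(X^{\xi}/Y,\eta_n):(X^{\xi},\zeta_n)\bigr)
&\le h_{(\sigma_k)_k,\topo}\bigl(G\actson X^{\xi}/Y:X^{\xi}\bigr)\\
&= h_{(\sigma_k)_k}\bigl(G\actson(X^{\xi}/Y,m_{X^{\xi}/Y}):(X^{\xi},m_{X^{\xi}})\bigr)\le 0,
\end{align*}
the equality by \cite[Theorem 1.1]{Me12} and strong soficity, the last inequality by definition of the outer Pinsker factor. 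But $(X^{\xi},\zeta_n)$ is a Bernoulli factor, hence has completely positive entropy in the presence by \cite{KerrCPE}, forcing each $\eta_n$ to be a Dirac mass; thus $m_{X^{\xi}/Y}=\lim_n\eta_n$ is Dirac and $Y=X^{\xi}$.
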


\begin{proof}

We may always embed $X$ as a $G$-invariant, closed subgroup of $(\T^{\N})^{G}.$ We fix such an embedding for the remainder of the proof. Since $\Delta^{(2)}(G\actson X)$ is dense and $\Delta^{(2)}(G\acston (\T^{N})^{G})=q(\ell^{2}(G,\R)^{\infty}),$ we may find a sequence $(\xi_{j})_{j=1}^{\infty}\in \ell^{2}(G,\R)^{\infty}$ so that $X=\bigvee_{j}X^{\xi_{j}}.$

(\ref{I:kernel square homoclinic}): 
By \cite[Proposition 4.6]{BowenLi} the fact that $\Delta^{(2)}(G\actson X)$ is dense implies that $G\actson (X,m_{X})$ is mixing. Hence by \cite{RobinAF} we know that there is a finite $N\triangleleft G$ so that $\Stab(x)=N$ for almost every $x\in X.$ So the action $G\actson (X,m_{X})$ induces a free action $G/N\actson (X,m_{X}).$

(\ref{I:WCL2Bern}):
By (\ref{I:kernel square homoclinic}) we know that  $G/N\actson (X,m_{X})$ is free.
Hence, by \cite{AbertWeiss} we know that $G/N\actson (X,m_{X})$ weakly contains a Bernoulli shift. So it simply suffices to show that $G/N\actson (X,m_{X})$ is weakly contained in a Bernoulli shift. 

We claim that in order to show that  $G/N\actson (X,m_{X})$ is weakly contained in a Bernoulli shift, it suffices to show that $G\actson (X,m_{X})$ is weakly contained in a Bernoulli shift. Suppose that $G\actson (X,m_{X})$ is weakly contained in $G\actson (A,\alpha)^{G}$ for some probability space $(A,\alpha).$ We may assume that $A$ is a compact, metrizable space and that $\alpha$ is a completed Borel probability measure on $A.$ Let $B$ be $A^{N}$ modulo the equivalence relation  $a_{1}\thicksim a_{2}$ if there is a $x\in N$ with $a_{1}=xa_{2}.$ Since $N$ is finite, it is easy to see that $B$ is a compact, metrizable space in the quotient topology. Let $\nu$ be completion of the Borel measure which is the pushforward of $\mu^{\otimes N}$ under the natural quotient map $A^{N}\to B.$
% Since $N$ is finite, it is straightforward to argue that the generalized  Bernoulli shift $G\actson (B,\nu)^{G/N}$ is a factor of the action $G\actson (A,\alpha)^{G}.$ In fact,
Observe that the action $G\actson (B,\nu)^{G/N}$ is precisely the factor of the action $G\actson (A,\alpha)^{G}$ corresponding to the $G$-invariant, complete, $\sigma$-algebra of sets which are $N$-invariant modulo null sets. Since $N$ is finite, and  $N\actson (X,m_{X})$ trivially, it is easy to see that the assumption that $G\actson (X,m_{X})$ is weakly contained in $G\actson (A,\alpha)^{G}$ implies that $G/N\actson (X,m_{X})$ is weakly contained in $G/N\actson (B,\nu)^{G/N}.$ So it suffices to show that $G\actson (X,m_{X})$ is weakly contained in a Bernoulli shift.
% Since $N$ is finite, and $\Stab(x)=N$ for almost every $x\in X,$ it is a simple exercise to show that the induced action $G/N\actson (X,m_{X})$ is weakly contained in a Bernoulli shift if and only if $G\actson (X,m_{X})$ is weakly contained in a Bernoulli shift. 

By Corollary \ref{C:structure corollary}, it suffices to show that each $G\actson (X^{\xi_{j}},m_{X^{\xi_{j}}})$ is weakly contained in a Bernoulli shift. But this is obvious from Lemma \ref{L:strong weak equiv}.

(\ref{I:CPEL2Bern}): Now assume that $G$ is sofic and fix a sofic approximation $\sigma_{k}\colon G\to S_{d_{k}}.$ By part (\ref{I:WCL2Bern}), and \cite[Corollary 3.6]{Me13}, we know that $G\actson X$ is strongly sofic with respect to $(\sigma_{k})_{k}.$ By Corollary \ref{C:generating  by cpe subgroups}, it suffices to show that $G\actson (X^{\xi_{j}},m_{X^{\xi_{j}}})$ has completely positive entropy in the presence for each $j\in \N.$ So it suffices to show that $G\actson (X^{\xi},m_{X^{\xi}})$ has completely positive entropy in the presence for each $\xi\in \ell^{2}(G,\R)^{\infty}.$

 By (\ref{I:WCL2Bern}), we know that $G\actson (X^{\xi},m_{X^{\xi}})$ is strongly sofic with respect to $(\sigma_{k})_{k},$ so its outer Pinsker factor is given by
$G\actson (X^{\xi}/Y,m_{X^{\xi}/Y})$
for some $G$-invariant, closed, normal subgroup $Y,$ and the factor map is simply  the quotient map $q_{Y}.$
Let $\nu_{n}$ be as in Lemma \ref{L:strong weak equiv}. Then (with limits taken in the weak$^{*}$ topology):
\[m_{X^{\xi}}=\lim_{n\to\infty}(\Theta_{\xi^{*}}^{\nu_{n}})_{*}(\nu_{n}^{\otimes G}).\]
For each $n\in \N,$ let
$\zeta_{n}=(\Theta_{\xi^{*}}^{\nu_{n}})_{*}(\nu_{n}^{\otimes G}).$
Since $\Theta_{\xi^{*}}^{\nu_{n}}$ is almost surely valued in $X^{\xi}$ by Theorem \ref{T:pushforward props} (\ref{I:controlling the image of conv}), we have that $\zeta_{n}$ is a probability measure on $X^{\xi}.$ So we can  define a probability measure $\eta_{n}$ on $X^{\xi}/Y$ by
$\eta_{n}=(q_{Y})_{*}(\zeta_{n}).$
Observe that
\begin{equation}\label{E:limits of points are points}
m_{X^{\xi}/Y}=\lim_{n\to\infty}\eta_{n}.
\end{equation}

Fix an $n\in \N.$ Then,
\begin{align*}
   h_{(\sigma_{k})_{k}}(G\actson (X^{\xi}/Y,\eta_{n}):(X^{\xi},\zeta_{n}))&\leq h_{(\sigma_{k})_{k},\topo}(G\actson X^{\xi}/Y:X^{\xi})\\
&=h_{(\sigma_{k})_{k}}(G\actson (X^{\xi}/Y,m_{X^{\xi}/Y}):(X^{\xi},m_{X^{\xi}}))
\end{align*}
the first inequality being obvious from the definitions, and the last equality following from strong soficity of $G\actson (X^{\xi},m_{X^{\xi}})$ and  \cite[Theorem 1.1]{Me12}. By definition of the outer Pinsker factor we see that
\[h_{(\sigma_{k})_{k}}(G\actson (X^{\xi}/Y,\eta_{n}):(X^{\xi},\zeta_{n}))\leq 0.\]
But, by definition, $G\actson (X^{\xi},\zeta_{n})$ is a factor of a Bernoulli shift and thus has completely positive entropy in the presence by \cite{KerrCPE}. So we must have that $\eta_{n}$ is a  dirac mass. By equation (\ref{E:limits of points are points}), we see that $m_{X^{\xi}/Y}$ is a dirac mass, so $Y=X^{\xi}.$ The definition of the outer  Pinsker factor now shows that $G\actson (X^{\xi},m_{X^{\xi}})$ has completely positive entropy in the presence.

\end{proof}

We remark that the following follows from the proof of Theorem \ref{T:square summable cpe weak contain}.

\begin{cor}
Let $G$ be a countable, discrete, sofic group and $G\actson X$ an algebraic action. Suppose that there are Lebesgue spaces $(B_{n},\beta_{n})$ and $G$-equivariant, measurable maps $\Theta_{n}\colon B_{n}^{G}\to X$ so that
\[(\Theta_{n})_{*}(\beta_{n}^{\otimes G})\to m_{X}.\]
Then $G\actson (X,m_{X})$ is weakly contained in a Bernoulli shift, and has completely positive entropy in the presence (with respect to any sofic approximation of $G$).

\end{cor}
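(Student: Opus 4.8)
The plan is to deduce this corollary as a direct adaptation of the proof of Theorem \ref{T:square summable cpe weak contain}, parts (\ref{I:WCL2Bern}) and (\ref{I:CPEL2Bern}), stripping away the homoclinic-point machinery and using only the abstract hypothesis that $m_X$ is a weak$^*$ limit of pushforwards $(\Theta_n)_*(\beta_n^{\otimes G})$ of Bernoulli measures. First I would observe that the weak containment assertion is immediate: each $(\Theta_n)_*(\beta_n^{\otimes G})$ is, by definition of the $\Theta_n$ being $G$-equivariant measurable maps from a Bernoulli system, a factor of a Bernoulli shift, hence $G\actson(X, (\Theta_n)_*(\beta_n^{\otimes G}))$ is weakly contained in a Bernoulli shift; since the class of actions weakly contained in a fixed product-closed class is itself closed under weak$^*$ limits of the measure (as $\mathcal{P}$ in Corollary \ref{C:structure corollary} is weak$^*$ closed), the limit $m_X$ also gives an action weakly contained in a Bernoulli shift.

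For completely positive entropy in the presence, fix a sofic approximation $(\sigma_k)_k$. By the weak containment just established and \cite[Corollary 3.6]{Me13}, $G\actson(X,m_X)$ is strongly sofic with respect to $(\sigma_k)_k$. Then by \cite[Theorem 1.3]{Me13} the outer Pinsker factor of $G\actson(X,m_X)$ has the form $G\actson(X/Y, m_{X/Y})$ for a closed, $G$-invariant, normal subgroup $Y\leq X$, with factor map the quotient $q_Y\colon X\to X/Y$. Set $\zeta_n=(\Theta_n)_*(\beta_n^{\otimes G})\in\Prob(X)$ and $\eta_n=(q_Y)_*(\zeta_n)\in\Prob(X/Y)$; continuity of $q_Y$ gives $\eta_n\to m_{X/Y}$ in the weak$^*$ topology. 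Now run the entropy comparison exactly as in the proof of (\ref{I:CPEL2Bern}): using \cite[Theorem 1.1]{Me12} (applicable since $G\actson(X,m_X)$ is strongly sofic) one has
\[
h_{(\sigma_k)_k}(G\actson(X/Y,\eta_n):(X,\zeta_n)) \leq h_{(\sigma_k)_k,\topo}(G\actson X/Y : X) = h_{(\sigma_k)_k}(G\actson(X/Y,m_{X/Y}):(X,m_X)),
\]
and the right-hand side is $\leq 0$ by definition of the outer Pinsker factor. On the other hand, $G\actson(X,\zeta_n)$ is a factor of a Bernoulli shift, so by \cite{KerrCPE} it has completely positive entropy in the presence, which forces $\eta_n$ to be a Dirac mass for every $n$. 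Since $\eta_n\to m_{X/Y}$, the limit $m_{X/Y}$ is a Dirac mass, hence $X/Y$ is trivial, i.e.\ $Y=X$. By definition of the outer Pinsker factor this says $G\actson(X,m_X)$ has completely positive entropy in the presence.

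The only mild obstacle is bookkeeping: one must make sure each $\zeta_n$ really is the pushforward of a Bernoulli measure under a $G$-equivariant measurable map (so that \cite{KerrCPE} applies verbatim), and that the factor map realizing the outer Pinsker factor is the group quotient $q_Y$ so that the pushforward argument and the topological entropy inequality from \cite[Theorem 1.1]{Me12} are legitimate. Both of these are already supplied by \cite[Theorem 1.3]{Me13} and \cite[Theorem 1.1]{Me12} in the form used in the proof of Theorem \ref{T:square summable cpe weak contain}, so no new ingredient is needed — the corollary is genuinely a restatement of what that proof establishes.
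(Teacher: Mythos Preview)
Your proposal is correct and follows exactly the approach the paper intends: the paper simply remarks that this corollary ``follows from the proof of Theorem \ref{T:square summable cpe weak contain},'' and what you have written is precisely that proof stripped of the homoclinic-point input and run with the abstract hypothesis $(\Theta_n)_*(\beta_n^{\otimes G})\to m_X$ in place of Lemma \ref{L:strong weak equiv}. The weak$^*$-closure of $\mathcal{P}$, the identification of the outer Pinsker factor as a group quotient via \cite[Theorem 1.3]{Me13}, the entropy bound via \cite[Theorem 1.1]{Me12}, and the use of \cite{KerrCPE} to force each $\eta_n$ to be a Dirac mass are all invoked exactly as in the paper.
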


Another consequence of Theorem \ref{T:square summable cpe weak contain} is the following result for actions on totally disconnected groups.

\begin{cor}
Let $G$ be a countable, discrete, sofic group with sofic approximation. Let $X$ be a compact, totally disconnected, abelian group and let $G\actson X$ be an algebraic action. Suppose that the homoclinic group of $G\actson X$ is dense. Then $G\actson (X,m_{X})$ has completely positive entropy in the presence with respect to every sofic approximation of $G.$
\end{cor}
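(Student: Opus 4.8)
The plan is to derive this corollary directly from Theorem \ref{T:square summable cpe weak contain} by reducing the hypothesis on the ordinary homoclinic group to a hypothesis on the square summable homoclinic group. The key observation is the one already flagged in the introduction: when $X$ is totally disconnected, the square summable homoclinic group $\Delta^{(2)}(G\actson X)$ coincides with the full homoclinic group $\Delta(G\actson X)$. First I would establish this coincidence. Recall that $x\in\Delta(G\actson X)$ means $gx\to 0$, i.e. for every open neighborhood $U$ of $0$ we have $gx\in U$ for all but finitely many $g$. Since $X$ is compact and totally disconnected, it has a neighborhood basis at $0$ consisting of open subgroups, and every character $\chi\in\widehat X$ factors through a finite quotient $X/K$ with $K$ an open subgroup. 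Hence for a homoclinic point $x$ and any $\chi\in\widehat X$, writing $K=\ker\chi$ (open), the set $\{g\in G: gx\notin K\}=\{g\in G:\chi(gx)\ne 0\}$ is finite, so $\sum_{g\in G}|\chi(gx)|^2<\infty$ trivially (the sum has finitely many nonzero terms, each at most $1/4$ in the convention $|t+\Z|=\inf_n|t+n|$). Thus $x\in\Delta^{(2)}(G\actson X)$, and the reverse containment $\Delta^{(2)}\subseteq\Delta$ is noted in the excerpt.

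Given that equality, the hypothesis ``$\Delta(G\actson X)$ is dense in $X$'' is literally the hypothesis ``$\Delta^{(2)}(G\actson X)$ is dense in $X$'' appearing in Theorem \ref{T:square summable cpe weak contain}. Then I would simply invoke part (\ref{I:CPEL2Bern}) of that theorem: since $G$ is sofic, $G\actson(X,m_X)$ has completely positive entropy in the presence with respect to any sofic approximation of $G$. (Part (\ref{I:kernel square homoclinic}) also tells us the kernel $N$ is finite and $G/N\actson(X,m_X)$ is free, and part (\ref{I:WCL2Bern}) gives weak equivalence to a Bernoulli shift, but for the stated corollary only (\ref{I:CPEL2Bern}) is needed.) One should note that Theorem \ref{T:square summable cpe weak contain} requires $G$ countably infinite; but if $G$ is finite the homoclinic condition forces $X$ to be trivial (every point is homoclinic, and density gives $X=\overline{\{0\}}=\{0\}$, whence the conclusion is vacuous), so we may assume $G$ is infinite.

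I expect no serious obstacle here; the corollary is essentially a specialization. The only point requiring a small argument is the identity $\Delta^{(2)}=\Delta$ in the totally disconnected case, and even that is routine given the structure theory of compact totally disconnected abelian groups (existence of a neighborhood basis of open subgroups at the identity, equivalently that $\widehat X$ is a discrete torsion group so every character has open kernel). If one wanted to be maximally careful, one could instead phrase the argument so that it only uses: for each $\chi\in\widehat X$, $\ker\chi$ is open in $X$ because $\widehat X$ is torsion — which is exactly the statement that $X$ is totally disconnected and compact via Pontryagin duality. With that in hand the corollary follows immediately from Theorem \ref{T:square summable cpe weak contain}(\ref{I:CPEL2Bern}).
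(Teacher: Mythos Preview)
Your proposal is correct and follows exactly the paper's approach: the paper's proof is just the two-line observation that $X$ totally disconnected implies $\widehat X$ is torsion, hence $\Delta^{(2)}(G\actson X)=\Delta(G\actson X)$, and then an appeal to Theorem \ref{T:square summable cpe weak contain}. Your only slip is in the throwaway finite-$G$ remark: if $G$ is finite then every point is homoclinic, so density is automatic and does not force $X=\{0\}$; but this edge case is inconsequential (and the paper does not address it either).
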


\begin{proof}
Since $X$ is totally disconnected, we know that $\widehat{X}$ is torsion, and thus the set of square-summable homoclinic points and the set of homoclinic points agree. So this follows from  Theorem \ref{T:square summable cpe weak contain}.

\end{proof}

We close by remarking that it is easy to construct many examples of algebraic actions with dense square summable homoclinic group.

\begin{defn}\label{D:homoclinc assoicated to a rep}
Let $G$ be a countable, discrete, group and $n\in \N.$ Let $\mathcal{H}\subseteq \ell^{2}(G)^{\oplus n}$ be a closed, $G$-invariant, linear subspace. We set
\[X^{\mathcal{H}}=\overline{\{q(\xi):\xi\in \mathcal{H}\}}.\]
\end{defn}

It is clear that $X^{\mathcal{H}}$ is $G$-invariant, and tautologically $\Delta^{(2)}(G\actson X^{\mathcal{H}})$ is dense in $X^{\mathcal{H}}.$ So to any closed, $G$-invariant subgroup of $\mathcal{H}$ we can naturally associate an algebra subshift of $(\T^{n})^{G}$ whose square summable homoclinic points are dense.

\appendix

\section{On invariant random subgroups of algebraic actions}\label{S:IRS}
The notion of weak containment, especially weak equivalence to Bernoulli shifts, is intimately connected with freeness of actions or more generally with invariant random subgroups (see e.g. \cite{AbertWeiss}). Let $G$ be a countable, discrete, group. We use $\Sub(G)$ for the space of subgroups of $G$ in the Chabauty topology. Suppose $G\actson (Y,\nu)$ is a probability measure-preserving action, then we have the stabilizer map $\Stab\colon Y\to \Sub(G)$ which sends each point to its stabilizer. The pushforward of $\nu$ under this map is called an \emph{invariant random subgroup of $G$}. The reason for this terminology is that $\nu$ is invariant under the conjugation action $G\actson \Sub(G).$ The term ``invariant random subgroup" was given by Ab\'{e}rt-Glasner-Virag \cite{AVY14}. Related ideas had been in the mathematical community for some time, first appearing in work of Zimmer (see \cite{StuckZimmer}). There were also similar results  before Ab\'{er}t-Glasner-Virag by Aldous-Lyons \cite{AldousLyons}, Bergeron-Gaboriau \cite{BergeronGaboriau}, and Vershik \cite{Vershik12}. The study of invariant random subgroups is a quite active area of current research, see e.g. \cite{AVY14, 7s12, StuckZimmer, BowenIRSFurst, RobinAF, BurtonKechrisMax}.

Because of our results on weak containment, it is natural to investigate invariant random subgroups coming from algebraic actions, and classify which ones can occur. Fortunately this ends up being straightforward from known results.
We say that a group is \emph{FC} if all of its conjugacy classes are finite. If $P$ is a property of groups, we say that a group $G$ is locally $P$ if all of its finitely generated subgroups satisfy $P.$

\begin{thm}\label{T:IRS}

Let $X$ be a compact group, $G$ a countable, discrete, group and $G\actson X$ a faithful algebraic action.
\begin{enumerate}[(i)]
\item If $X$ is abelian, then there is a normal, locally finite subgroup $N$ of $G$ so that for almost every $x\in X,$ we have that $\Stab(x)\leq N.$
\item In general, there is a normal, locally FC subgroup $N$ of $G$ so that for almost every $x\in X,$ we have that $\Stab(x)\leq N.$
\end{enumerate}
\end{thm}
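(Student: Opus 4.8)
The plan is to reduce Theorem \ref{T:IRS} to the known classification of invariant random subgroups of mixing (or at least faithful) algebraic actions, by exhibiting a candidate normal subgroup $N$ directly. First I would recall that for a faithful algebraic action $G \actson X$, if $x \in X$ is a point with stabilizer $H = \Stab(x)$, then $H$ fixes the entire orbit closure $\overline{\{gx : g \in G\}}$ pointwise only after translation; more usefully, the set $\Fix_X(g) = \{y \in X : gy = y\}$ is a closed subgroup of $X$ for each $g \in G$, and $g \in \Stab(x)$ iff $x \in \Fix_X(g)$. So the invariant random subgroup $\Stab_*(m_X)$ assigns to the event ``$g \in \Stab(x)$'' the mass $m_X(\Fix_X(g))$. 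By conjugation-invariance this mass is a conjugacy-class invariant, and since $\Fix_X(g)$ is a closed subgroup of the compact group $X$, either $m_X(\Fix_X(g)) = 0$ or $\Fix_X(g)$ is open, hence of finite index in $X$. Let $N = \{g \in G : m_X(\Fix_X(g)) > 0\} = \{g \in G : \Fix_X(g) \text{ has finite index in } X\}$; this is the natural candidate.

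The key steps are then: (1) show $N$ is a subgroup — closure under inverses is trivial since $\Fix_X(g) = \Fix_X(g^{-1})$, and closure under products follows because $\Fix_X(g) \cap \Fix_X(h) \subseteq \Fix_X(gh)$, and the intersection of two finite-index closed subgroups is again finite-index; (2) show $N$ is normal — immediate since $\Fix_X(hgh^{-1}) = h \cdot \Fix_X(g)$ and translation by $h$ preserves Haar measure, so $g \in N \iff hgh^{-1} \in N$; (3) show $\Stab(x) \subseteq N$ for a.e.\ $x$ — for each $g \notin N$ we have $m_X(\Fix_X(g)) = 0$, so $m_X(\{x : g \in \Stab(x)\}) = 0$, and since $G$ is countable, a.e.\ $x$ avoids $\Fix_X(g)$ for every $g \notin N$ simultaneously, i.e.\ $\Stab(x) \subseteq N$; (4) identify the algebraic structure of $N$. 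For step (4), I would argue that $N$ acts on $X$ in a way that factors through its action on the finite-index subgroups $\Fix_X(g)$: concretely, for a finitely generated subgroup $K = \langle g_1, \dots, g_r\rangle \leq N$, the subgroup $X_0 = \bigcap_{i} \Fix_X(g_i)$ is a finite-index closed subgroup of $X$, so $X/X_0$ (suitably interpreted, e.g.\ via the left coset space, or via the induced action on the finite-index closed subgroup generated by all $G$-translates when $X$ is abelian) is a \emph{finite} set on which $K$ acts, and this action is faithful on $K$ because the original action is faithful and $K$ already acts trivially on $X_0$. A faithful action of $K$ on a finite set realizes $K$ as a subgroup of a finite symmetric group, hence $K$ is finite in the abelian case. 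In general one gets $K$ embedding into $\Aut$ of a compact group with a dense finite-index subgroup acted on trivially, which forces $K$ to be FC (its conjugacy classes being finite because conjugation is controlled by the finite coset data), giving the locally FC conclusion.

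The main obstacle I anticipate is step (4) in the nonabelian case: making precise why ``$g$ has a finite-index fixed subgroup for all $g$ in a finitely generated $K$'' forces $K$ to be FC rather than merely virtually abelian or residually finite. In the abelian setting this is clean because $X_0$ is itself a normal (indeed central-like) finite-index closed \emph{subgroup}, so $X/X_0$ is a finite group and $K \hookrightarrow \Aut(X/X_0)$ is literally finite. In the nonabelian setting $\Fix_X(g)$ need not be normal in $X$, so one must instead pass to the finite-index normal core, or appeal to the cited reference (\cite{RobinAF} or \cite{StuckZimmer}-type results on IRSs being supported on ``uniformly recurrent'' finite-by-something subgroups). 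The cleanest route is probably to invoke the known structure theory: the IRS of a faithful algebraic action is supported on subgroups $H$ such that $H$ is contained in the kernel of the action on every finite quotient $X/X_0$ with $X_0$ finite-index, and such kernels are exactly the locally-FC normal subgroup described; I would cite the appropriate result (the excerpt mentions ``this ends up being straightforward from known results'' and points to Tucker-Drob's reference) rather than reprove it. The remaining work — verifying $N$ is a genuine normal subgroup with the stated local property — is then the routine measure-theoretic bookkeeping sketched above.
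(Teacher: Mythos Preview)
Your overall strategy matches the paper's exactly: you define the same subgroup $N=\{g\in G:[X:\Fix_{X}(g)]<\infty\}$, and your steps (1)--(3) reproduce Proposition~\ref{P:almost everywhere containment} verbatim. The divergence, and the gap, is in step~(4).

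In the abelian case you claim that the finitely generated $K=\langle g_{1},\dots,g_{r}\rangle\leq N$ acts \emph{faithfully} on the finite group $X/X_{0}$ (where $X_{0}=\bigcap_{i}\Fix_{X}(g_{i})$), because $K$ already acts trivially on $X_{0}$ and the ambient action is faithful. This implication is false: an automorphism of an abelian $X$ can fix $X_{0}$ pointwise and induce the identity on $X/X_{0}$ without being the identity on $X$. Concretely, any such automorphism has the form $x\mapsto x+c(\bar{x})$ for a homomorphism $c\colon X/X_{0}\to X_{0}$, and $c$ need not vanish. So the kernel of $K\to\Aut(X/X_{0})$ can be nontrivial, and your faithfulness argument collapses. (The conclusion is still salvageable: that kernel embeds into $\Hom(X/X_{0},X_{0})$, which is abelian and torsion since $X/X_{0}$ is finite, so $K$ is finitely generated and virtually abelian-torsion, hence finite. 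But this is exactly the content that needs to be supplied.)

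In the nonabelian case you correctly sense the difficulty and propose to cite a reference, but the references you name (Tucker-Drob on mixing actions, Stuck--Zimmer) are IRS results and do not prove the needed automorphism-group fact. The paper instead invokes Menegazzo--Robinson \cite[Lemma~2]{FCAut}, which states precisely that any finitely generated subgroup of $\{\alpha\in\Aut(X):[X:\Fix_{\alpha}(X)]<\infty\}$ is FC, and is finite when $X$ is abelian. That is the missing ingredient for both parts of step~(4).
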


The bulk of the proof of Theorem \ref{T:IRS} is already in \cite[Lemma 2]{FCAut}. The only really new observation is the following simple proposition.

\begin{prop}\label{P:almost everywhere containment}
Let $G$ be a countable, discrete group and $G\actson X$ an algebraic action. Let
\[N=\{g\in G:[X:\Fix_{g}(X)]<\infty\}.\]
Then $N\triangleleft G$ and for almost every $x\in X$ we have that $\Stab(x)\leq N.$
\end{prop}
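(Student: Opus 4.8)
The plan is to first verify that $N$ is a normal subgroup, which is essentially formal, and then to prove the almost-everywhere containment using the Haar measure and a Borel--Cantelli / measure-theoretic argument. For normality: each $\Fix_g(X) = \{x \in X : gx = x\}$ is a closed subgroup of $X$, and if $g, h \in N$ then $\Fix_g(X) \cap \Fix_h(X) \subseteq \Fix_{gh}(X)$, so $[X : \Fix_{gh}(X)] \le [X : \Fix_g(X)\cap \Fix_h(X)] \le [X:\Fix_g(X)]\,[X:\Fix_h(X)] < \infty$; similarly $\Fix_{g^{-1}}(X) = \Fix_g(X)$, so $N$ is a subgroup. For conjugation-invariance, note that for $s \in G$ the automorphism $s\colon X \to X$ carries $\Fix_g(X)$ onto $\Fix_{sgs^{-1}}(X)$ (since $gx = x \iff (sgs^{-1})(sx) = sx$), and an automorphism preserves index; hence $sgs^{-1} \in N$ whenever $g \in N$, so $N \triangleleft G$.

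For the main claim, the key observation is that for $g \notin N$, the closed subgroup $\Fix_g(X)$ has infinite index in $X$, hence $m_X(\Fix_g(X)) = 0$: a compact group has no proper closed subgroup of positive Haar measure that is not of finite index, since a closed subgroup $H \le X$ with $m_X(H) > 0$ must be open (its cosets partition $X$ into sets of equal positive measure, forcing finitely many of them), and an open subgroup of a compact group has finite index. Now observe that $\{x \in X : \Stab(x) \not\le N\} = \bigcup_{g \in G \setminus N} \Fix_g(X)$, because $g \in \Stab(x)$ means precisely $x \in \Fix_g(X)$. Since $G \setminus N$ is countable and each $\Fix_g(X)$ with $g \notin N$ is $m_X$-null, this union is $m_X$-null. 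Therefore $\Stab(x) \le N$ for $m_X$-almost every $x \in X$.

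The argument is short and the only step requiring a little care is the claim that a positive-measure closed subgroup of a compact group is open (equivalently of finite index), but this is a standard fact: if $m_X(H) > 0$, then by regularity of Haar measure and the fact that $H \cdot H = H$ one can produce an open neighborhood of the identity inside $H$ (e.g. via the Steinhaus-type argument that $m_X(H)>0$ implies $H H^{-1} = H$ contains a neighborhood of $1$), so $H$ is open, and open subgroups of compact groups have finite index because the cosets form an open cover with no finite proper subcover unless there are finitely many. I do not expect any genuine obstacle here; the proposition is really just packaging the elementary measure theory of compact groups together with the definition of $N$, and the substantive content of Theorem \ref{T:IRS} is deferred to \cite[Lemma 2]{FCAut}.
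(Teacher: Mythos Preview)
Your proof is correct and follows essentially the same approach as the paper: the paper's proof is nothing more than the countable-union argument $\{x:\Stab(x)\nsubseteq N\}=\bigcup_{g\in G\setminus N}\Fix_{g}(X)$ together with the observation that $m_{X}(\Fix_{g}(X))=0$ for $g\notin N$, with the normality of $N$ declared obvious. You have simply supplied the details the paper omits (the explicit check that $N$ is a subgroup and is conjugation-invariant, and the Steinhaus-type justification that a closed subgroup of infinite index is Haar-null).
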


\begin{proof}
It is obvious that $N$ is a normal subgroup.
Observe that for every $g\in G\setminus N,$ we have that $m_{X}(\Fix_{g}(X))=0.$
So
\[\{x\in X:\Stab(x)\nsubseteq N\}=\bigcup_{g\in G\setminus N}\Fix_{g}(X)\]
has measure zero.

\end{proof}

\begin{proof}[Proof of Theorem \ref{T:IRS}]
By Proposition \ref{P:almost everywhere containment} and the fact that $G$ acts faithfully, it suffices to show that any finitely generated subgroup of
\[\{\alpha\in \Aut(X):[X:\Fix_{\alpha}(X)]<\infty\}\]
is FC, and that if $X$ is abelian such a finitely generated subgroup is finite. This follows from \cite[Lemma 2]{FCAut}.
\end{proof}

Notice that Theorem \ref{T:IRS} is optimal. For example, if we let $S_{\infty}$ be the group of permutations of $\N$ with only finitely many non-fixed points, we then have a generalized Bernoulli action $S_{\infty}\actson (\Z/2\Z)^{\N}$ induced from the action $S_{\infty}\actson \N.$ The group $S_{\infty}$ is clearly locally finite, and the stabilizers of this action are typically infinite. 
% In fact, the complete $\sigma$-algebra generated by $\{\Fix_{\phi}((\Z/2\Z)^{\N})):\phi\in S_{\infty}\}$ coincides with the $\sigma$-algebra of all measurable sets. Thus this action is \emph{totally non-free} in the sense of Vershik \cite{Vershik12}. 
In fact, $\{g\in S_{\infty}:[X:\Fix_{g}(X)]<\infty\}=S_{\infty}.$ So in general we cannot force $N$ in Theorem \ref{T:IRS} to be a proper subgroup of $G$ if $G$ is locally finite. The action $S_{\infty}\actson (Z/2\Z)^{\N}$ is a \emph{reduced totally non-free action} in the sense of Vershik (see \cite{Vershik12} for more details).

For certain choices of $G,$ Theorem \ref{T:IRS} implies that any ergodic, faithful algebraic action is free. To prove this we use the following simple corollary of Theorem \ref{T:IRS}.

\begin{cor}\label{C: easy IRS}
Let $G$ be a countable, discrete group, and $G\actson X$ an algebraic action such that $G\actson (X,m_{X})$ is ergodic and faithful.
\begin{enumerate}[(i)]
\item If every normal, locally FC subgroup of $G$ has only countably many subgroups, then $G\actson (X,m_{X})$ is essentially free. \label{I:FC free}
\item  If every normal, locally finite subgroup of $G$ has only countably many subgroups, and if $X$ is abelian, then $G\actson (X,m_{X})$ is essentially free.  \label{I:Locall fintie free}
\end{enumerate}

\end{cor}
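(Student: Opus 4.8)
The plan is to combine Theorem~\ref{T:IRS} with a compactness argument for Haar measure. First I would invoke Theorem~\ref{T:IRS}: in case (i), part (ii) of that theorem supplies a normal, locally FC subgroup $N\triangleleft G$ with $\Stab(x)\leq N$ for $m_{X}$-almost every $x$; in case (ii), part (i) of that theorem supplies a normal, locally finite such $N$. In either case the hypothesis on $G$ says that $\Sub(N)$ is countable, so the (Borel, $G$-equivariant) stabilizer map $\Stab\colon X\to \Sub(G)$ is essentially valued in a countable set.

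Next I would pass to the invariant random subgroup $\theta=\Stab_{*}(m_{X})$. Since $G\actson (X,m_{X})$ is ergodic and $\Stab$ is equivariant for the conjugation action $G\actson\Sub(G)$, $\theta$ is an ergodic, $G$-invariant probability measure on $\Sub(G)$ concentrated on the countable set $\Sub(N)$. The $G$-orbits partition this countable set into countably many $G$-invariant Borel pieces, so by ergodicity $\theta$ is concentrated on a single orbit $C_{0}$, i.e.\ on a single conjugacy class $\{gH_{0}g^{-1}:g\in G\}$. Since $\theta$ restricts to a $G$-invariant probability measure on the transitive $G$-set $C_{0}\cong G/N_{G}(H_{0})$, the subgroup $N_{G}(H_{0})$ must have finite index, and hence $C_{0}=\{H_{1},\dots,H_{r}\}$ is \emph{finite}.

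The heart of the argument is then to show each $H_{i}=\{e\}$. Set $A_{i}=\{x:\Stab(x)=H_{i}\}$ and $Z_{i}=\Fix_{H_{i}}(X)$, a closed subgroup of $X$ with $A_{i}\subseteq Z_{i}$. The sets $A_{i}$ cover $X$ up to a null set, and since $G$ permutes them transitively (if $gH_{i}g^{-1}=H_{j}$ then $gA_{i}=A_{j}$), they all have the same measure $1/r>0$; hence each $Z_{i}$ has positive Haar measure, so is a finite-index (closed, hence open) subgroup of $X$. By Poincaré's theorem on intersections of finite-index subgroups, $Z=\bigcap_{i=1}^{r}Z_{i}$ is again finite-index, so $m_{X}(Z)>0$. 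But $Z=\Fix_{K}(X)$ for $K=\langle H_{1},\dots,H_{r}\rangle$, the normal closure of $H_{0}$ in $G$; since $K\triangleleft G$, the subgroup $Z$ is $G$-invariant, so ergodicity forces $m_{X}(Z)=1$, and as $m_{X}$ has full support and $Z$ is closed, $Z=X$. Thus $K$ acts trivially on $X$, and faithfulness gives $K=\{e\}$, so every $H_{i}=\{e\}$; equivalently $\Stab(x)=\{e\}$ for a.e.\ $x$, which is precisely essential freeness. The same proof covers (ii), using Theorem~\ref{T:IRS}(i) in place of Theorem~\ref{T:IRS}(ii).

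The one genuinely delicate step is producing the positive-measure $G$-invariant subgroup $\bigcap_{i}Z_{i}$. Note that $0_{X}\in X$ is fixed by all of $G$, so the stabilizer map is honestly non-constant and one cannot simply apply ergodicity to $A_{1}$ directly; one first has to extract the \emph{finiteness} of the conjugacy class $C_{0}$ (so that only finitely many $Z_{i}$ occur) before invoking that a positive-measure closed subgroup of a compact group has finite index together with Poincaré's theorem. Everything else — Borel measurability and equivariance of $\Stab$, the elementary measure identity $m_{X}(Y)=1/[X:Y]$ for closed $Y\leq X$, and the full-support property of Haar measure on a compact group — is routine.
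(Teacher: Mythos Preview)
Your proof is correct but takes a genuinely different route from the paper. The paper's argument is shorter because it imports an external structural fact: an ergodic algebraic action is automatically \emph{weakly mixing} (citing \cite[Lemma 2.13]{Me12}, essentially \cite[Lemma 1.2]{Schmidt}). Once that is in hand, the invariant random subgroup $\nu=\Stab_{*}(m_{X})$ is a weakly mixing $G$-system supported on a countable set, and such a system must be a point mass (a nontrivial finite orbit would produce a nontrivial finite-dimensional invariant subspace of the Koopman representation); faithfulness then pins the point mass at $\{1\}$. You instead avoid the weak-mixing citation entirely and exploit the compact-group structure of $X$ directly: from ergodicity alone you reduce to a single finite conjugacy class $\{H_{1},\dots,H_{r}\}$, pass to the fixed-point subgroups $Z_{i}=\Fix_{H_{i}}(X)$ (each of positive Haar measure, hence finite index), intersect them, and use ergodicity together with faithfulness to force $K=\langle H_{1},\dots,H_{r}\rangle$ to be trivial. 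Your approach is longer but self-contained, using nothing beyond elementary facts about Haar measure on compact groups; in effect your finite-index/fixed-point argument recovers on the fly exactly the consequence of weak mixing that the paper invokes from the literature. One cosmetic remark: in case~(i) the group $X$ need not be abelian, so the identity element should be written $1_{X}$ rather than $0_{X}$, but this does not affect anything in the argument.
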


\begin{proof}
We only prove (\ref{I:FC free}), the proof of (\ref{I:Locall fintie free}) is similar. Let $\nu=\Stab_{*}(m_{X}).$ By Theorem \ref{T:IRS}, we know that $\nu$ is supported on a countable set. Since $G\actson (X,m_{X})$ is ergodic, we know by \cite[Lemma 2.13]{Me12} that $G\actson (X,m_{X})$ is weakly mixing (this also essentially follows from \cite[Lemma 1.2]{Schmidt}). Thus $G\actson (\Sub(G),\nu)$ is weakly mixing, and since $\nu$ is supported on a countable set this is only possible when $\nu$ is a point mass. Hence $\Stab(x)=\{1\}$ for almost every $x\in X.$

\end{proof}

\begin{cor}
Let $G$ be a countable, discrete, group and let $G\actson X$ be a faithful algebraic action.
\begin{enumerate}[(i)]
\item If $G$ is torsion-free and $X$ is abelian, then $G\actson (X,m_{X})$ is essentially free. \label{I:torsion free freeness}
\item Suppose $G\actson (X,m_{X})$ is ergodic, and $G$ has a finite-index subgroup $H$ which is any of the following:
\begin{itemize}
    \item a finitely generated nilpotent group,
    \item a hyperbolic group,
    \item $\textnormal{Out}(\F_{n})$ for some $n\in \N,$
    \item a finitely generated right-angled Artin group, or more generally a finite graph product of free abelian groups of finite rank,
    \item a mapping class group of an orientable, connected Riemann surface with negative Euler characteristic,
    \item a group with a bound on the order of its finite subgroups and which acts properly on a $\textnormal{CAT}(0)$ cubical complex,
\end{itemize}
then $G\actson (X,m_{X})$ is essentially free.
\label{I:hyperbolic freeness}

\end{enumerate}

\end{cor}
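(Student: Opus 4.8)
For part (i) I would invoke Theorem \ref{T:IRS}(i): since the action is faithful and $X$ is abelian, it supplies a normal, locally finite subgroup $N\trianglelefteq G$ with $\Stab(x)\leq N$ for $m_X$-almost every $x$. A torsion-free group contains no nontrivial finite subgroup, hence no nontrivial locally finite subgroup, so $N=\{1\}$ and the action is essentially free; note that ergodicity is not needed for this half.

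For part (ii) the plan is to deduce the conclusion from Corollary \ref{C: easy IRS}(i), whose standing assumptions (ergodicity and faithfulness of $G\actson(X,m_X)$) are precisely the hypotheses of (ii). Thus it suffices to show that every normal, locally FC subgroup $N$ of $G$ has only countably many subgroups. The first step is to observe that $N$ is amenable: each of its finitely generated subgroups is FC, hence finite-by-abelian, hence amenable, and amenability is a local property. Writing $H\leq G$ for the given finite-index subgroup, $M:=N\cap H$ is then an amenable subgroup of $H$ normalized by $H$, and so $M$ is contained in the amenable radical $R_a(H)$ (which exists since $H$ is countable).

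The crucial structural fact I would extract from the hypotheses is that $R_a(H)$ is virtually polycyclic for every $H$ on the list. For finitely generated nilpotent $H$ this is trivial, with $R_a(H)=H$. For $H$ hyperbolic, $\Out(\F_n)$, or a mapping class group of a surface of negative Euler characteristic, $R_a(H)$ is in fact finite: apart from a few small cases in which $H$ is finite or virtually cyclic (where finiteness of $R_a(H)$ is immediate), these groups are acylindrically hyperbolic, and an acylindrically hyperbolic group has finite amenable radical. For $H$ a finite graph product of finitely generated free abelian groups --- in particular a finitely generated right-angled Artin group --- $R_a(H)$ is the center of $H$, a finitely generated free abelian group. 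For $H$ with a uniform bound on the orders of its finite subgroups acting properly on a CAT(0) cube complex, any amenable subgroup of $H$ again acts properly on that complex, hence is virtually finitely generated abelian, so $R_a(H)$ is virtually polycyclic. With this in hand, $M\leq R_a(H)$ is finitely generated (subgroups of virtually polycyclic groups are finitely generated), so $N$ is finitely generated because $[N:M]\leq[G:H]<\infty$; being finitely generated and locally FC, $N$ is itself FC, whence the centralizers in $N$ of a finite generating set are of finite index and $Z(N)$, their intersection, is of finite index in $N$. Then $N$ is finitely generated and virtually abelian, hence virtually polycyclic, hence Noetherian, and a countable group all of whose subgroups are finitely generated has only countably many subgroups. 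Corollary \ref{C: easy IRS}(i) then completes part (ii).

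The hard part will be verifying ``$R_a(H)$ is virtually polycyclic'' uniformly across the six families, and the genuinely delicate case is the CAT(0) cubical one: there one must invoke a flat-torus-type theorem to the effect that amenable (or even just locally FC) groups acting properly on a CAT(0) cube complex are virtually abelian, and use the bound on the orders of finite subgroups to force the ``virtually'' part to be finite --- so that $M$ is honestly finitely generated --- while being careful about the possibly non-cocompact nature of the action. The remaining cases should be routine given standard references: basic hyperbolic group theory for the non-elementary hyperbolic case, the acylindrical hyperbolicity of $\Out(\F_n)$ and of mapping class groups in the stated range, and the identification of the amenable radical of a graph product with its center.
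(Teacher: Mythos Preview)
Your proof of (\ref{I:torsion free freeness}) matches the paper's exactly (the paper just says ``trivial from Theorem~\ref{T:IRS}'').

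For (\ref{I:hyperbolic freeness}) your argument is correct and ultimately rests on the same facts as the paper's, but the packaging is different. The paper reduces, as you do, to showing that every normal locally FC subgroup of $G$ has only countably many subgroups, passes to $H$ by finite index, and then simply \emph{cites} the appropriate Tits-alternative result for each of the six families (Gromov/Ghys--de la Harpe for hyperbolic groups, Bestvina--Feighn--Handel for $\Out(\F_n)$, Ivanov/McCarthy/Birman--Lubotzky--McCarthy for mapping class groups, Sageev--Wise for the CAT(0) cubical case, Antol\'{i}n--Minasyan for graph products). The implicit logic is: a locally FC subgroup contains no nonabelian free group, so by the relevant Tits alternative it is virtually abelian (or virtually cyclic, or nilpotent), and in each setting such subgroups are automatically finitely generated, hence Noetherian, hence have countably many subgroups.

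You instead route the argument through the amenable radical: observe $N$ is amenable, so $N\cap H\leq R_a(H)$, show $R_a(H)$ is virtually polycyclic case by case, and deduce $N$ is finitely generated FC, hence virtually polycyclic. This is a clean uniform framework and has the advantage of making explicit exactly what is being used (finiteness or virtual polycyclicity of $R_a(H)$), whereas the paper leaves the reader to extract the needed consequence from each cited Tits alternative. The underlying input from the literature is identical in both approaches; yours is more self-contained, the paper's is terser. Your caution about the CAT(0) cubical case is warranted---that is precisely where the Sageev--Wise reference is doing the work.
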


\begin{proof}

(\ref{I:torsion free freeness}): Trivial from Theorem \ref{T:IRS}.

(\ref{I:hyperbolic freeness}):
In each case, we verify that every locally FC, normal, subgroup of $H$ has only countably many subgroups. Since $H$ is finite-index in $G,$ this implies that every locally FC, normal subgroup of $G$ has only countably many subgroups. This is trivial if $H$ is a finitely generated nilpotent group. When $H$ is a hyperbolic group, it follows from \cite[Section 8]{GromovHyper},\cite{GhyDHPHyper} and when $H$ is $\textnormal{Out}(\F_{n})$ it follows from the Tits alternative for $\textnormal{Out}(\F_{n})$  (see \cite{TitsAltBMFI, TitsAltBMFII} and \cite{TitsAltBMFIII}). For the case of mapping class groups see \cite{TitsAltMcCarthy, TitsAltIvanov, SolvableSubBLM}, see \cite{TitsAltMW} for the groups which act properly on a $\textnormal{CAT}(0)$ cubical complex, and \cite{TitsAltAntAsh} for right-angled Artin groups and graph products.

\end{proof}

% \bib, bibdiv, biblist are defined by the amsrefs package.
\begin{bibdiv}
\begin{biblist}

\bib{AasPopa}{article}{
      author={Aaserud, Andreas},
      author={Popa, Sorin},
       title={Approximate equivalence of group actions},
        date={2018},
        ISSN={0143-3857},
     journal={Ergodic Theory Dynam. Systems},
      volume={38},
      number={4},
       pages={1201\ndash 1237},
         url={https://doi.org/10.1017/etds.2016.100},
      review={\MR{3789164}},
}

\bib{7s12}{article}{
      author={Ab\'{e}rt, Mikl\'{o}s},
      author={Bergeron, Nicolas},
      author={Biringer, Ian},
      author={Gelander, Tsachik},
      author={Nikolov, Nikolay},
      author={Raimbault, Jean},
      author={Samet, Iddo},
       title={On the growth of {$L^2$}-invariants for sequences of lattices in
  {L}ie groups},
        date={2017},
        ISSN={0003-486X},
     journal={Ann. of Math. (2)},
      volume={185},
      number={3},
       pages={711\ndash 790},
         url={http://dx.doi.org/10.4007/annals.2017.185.3.1},
      review={\MR{3664810}},
}

\bib{AVY14}{article}{
      author={Ab\'ert, Mikl\'os},
      author={Glasner, Yair},
      author={Vir\'ag, B\'alint},
       title={Kesten's theorem for invariant random subgroups},
        date={2014},
        ISSN={0012-7094},
     journal={Duke Math. J.},
      volume={163},
      number={3},
       pages={465\ndash 488},
         url={http://dx.doi.org/10.1215/00127094-2410064},
      review={\MR{3165420}},
}

\bib{AbertWeiss}{article}{
      author={Ab\'{e}rt, Mikl\'{o}s},
      author={Weiss, Benjamin},
       title={Bernoulli actions are weakly contained in any free action},
        date={2013},
        ISSN={0143-3857},
     journal={Ergodic Theory Dynam. Systems},
      volume={33},
      number={2},
       pages={323\ndash 333},
         url={https://doi.org/10.1017/S0143385711000988},
      review={\MR{3035287}},
}

\bib{AldousLyons}{article}{
      author={Aldous, David},
      author={Lyons, Russell},
       title={Processes on unimodular random networks},
        date={2007},
        ISSN={1083-6489},
     journal={Electron. J. Probab.},
      volume={12},
       pages={no. 54, 1454\ndash 1508},
         url={https://doi.org/10.1214/EJP.v12-463},
      review={\MR{2354165}},
}

\bib{TitsAltAntAsh}{article}{
      author={Antol\'{i}n, Yago},
      author={Minasyan, Ashot},
       title={Tits alternatives for graph products},
        date={2015},
        ISSN={0075-4102},
     journal={J. Reine Angew. Math.},
      volume={704},
       pages={55\ndash 83},
         url={https://doi.org/10.1515/crelle-2013-0062},
      review={\MR{3365774}},
}

\bib{AustinAdd}{article}{
      author={Austin, Tim},
       title={Additivity properties of sofic entropy and measures on model
  spaces},
        date={2016},
        ISSN={2050-5094},
     journal={Forum Math. Sigma},
      volume={4},
       pages={e25, 79},
         url={http://dx.doi.org/10.1017/fms.2016.18},
      review={\MR{3542515}},
}

\bib{BergeronGaboriau}{article}{
      author={Bergeron, Nicolas},
      author={Gaboriau, Damien},
       title={Asymptotique des nombres de {B}etti, invariants {$l^2$} et
  laminations},
        date={2004},
        ISSN={0010-2571},
     journal={Comment. Math. Helv.},
      volume={79},
      number={2},
       pages={362\ndash 395},
         url={https://doi.org/10.1007/s00014-003-0798-1},
      review={\MR{2059438}},
}

\bib{TitsAltBMFI}{article}{
      author={Bestvina, Mladen},
      author={Feighn, Mark},
      author={Handel, Michael},
       title={The {T}its alternative for {${\rm Out}(F_n)$}. {I}. {D}ynamics of
  exponentially-growing automorphisms},
        date={2000},
        ISSN={0003-486X},
     journal={Ann. of Math. (2)},
      volume={151},
      number={2},
       pages={517\ndash 623},
         url={https://doi.org/10.2307/121043},
      review={\MR{1765705}},
}

\bib{TitsAltBMFIII}{article}{
      author={Bestvina, Mladen},
      author={Feighn, Mark},
      author={Handel, Michael},
       title={Solvable subgroups of {${\rm Out}(F_n)$} are virtually
  {A}belian},
        date={2004},
        ISSN={0046-5755},
     journal={Geom. Dedicata},
      volume={104},
       pages={71\ndash 96},
         url={https://doi.org/10.1023/B:GEOM.0000022864.30278.34},
      review={\MR{2043955}},
}

\bib{TitsAltBMFII}{article}{
      author={Bestvina, Mladen},
      author={Feighn, Mark},
      author={Handel, Michael},
       title={The {T}its alternative for {${\rm Out}(F_n)$}. {II}. {A}
  {K}olchin type theorem},
        date={2005},
        ISSN={0003-486X},
     journal={Ann. of Math. (2)},
      volume={161},
      number={1},
       pages={1\ndash 59},
         url={https://doi.org/10.4007/annals.2005.161.1},
      review={\MR{2150382}},
}

\bib{SolvableSubBLM}{article}{
      author={Birman, Joan~S.},
      author={Lubotzky, Alex},
      author={McCarthy, John},
       title={Abelian and solvable subgroups of the mapping class groups},
        date={1983},
        ISSN={0012-7094},
     journal={Duke Math. J.},
      volume={50},
      number={4},
       pages={1107\ndash 1120},
         url={https://doi.org/10.1215/S0012-7094-83-05046-9},
      review={\MR{726319}},
}

\bib{BowenExamples}{article}{
      author={Bowen, Lewis},
       title={Examples in the entropy theory of countable group actions},
     journal={Ergodic Theory Dynam. Systems},
        note={to appear},
}

\bib{Bow}{article}{
      author={Bowen, Lewis},
       title={Measure conjugacy invariants for actions of countable sofic
  groups},
        date={2010},
        ISSN={0894-0347},
     journal={J. Amer. Math. Soc.},
      volume={23},
      number={1},
       pages={217\ndash 245},
         url={http://dx.doi.org/10.1090/S0894-0347-09-00637-7},
      review={\MR{2552252}},
}

\bib{BowenEntropy}{article}{
      author={Bowen, Lewis},
       title={Entropy for expansive algebraic actions of residually finite
  groups},
        date={2011},
        ISSN={0143-3857},
     journal={Ergodic Theory Dynam. Systems},
      volume={31},
      number={3},
       pages={703\ndash 718},
         url={https://doi.org/10.1017/S0143385710000179},
      review={\MR{2794944}},
}

\bib{BowenAmen}{article}{
      author={Bowen, Lewis},
       title={Sofic entropy and amenable groups},
        date={2012},
        ISSN={0143-3857},
     journal={Ergodic Theory Dynam. Systems},
      volume={32},
      number={2},
       pages={427\ndash 466},
         url={https://doi.org/10.1017/S0143385711000253},
      review={\MR{2901354}},
}

\bib{BowenIRSFurst}{article}{
      author={Bowen, Lewis},
       title={Random walks on random coset spaces with applications to
  {F}urstenberg entropy},
        date={2014},
        ISSN={0020-9910},
     journal={Invent. Math.},
      volume={196},
      number={2},
       pages={485\ndash 510},
         url={https://doi.org/10.1007/s00222-013-0473-0},
      review={\MR{3193754}},
}

\bib{BowenLi}{article}{
      author={Bowen, Lewis},
      author={Li, Hanfeng},
       title={Harmonic models and spanning forests of residually finite
  groups},
        date={2012},
        ISSN={0022-1236},
     journal={J. Funct. Anal.},
      volume={263},
      number={7},
       pages={1769\ndash 1808},
         url={https://doi.org/10.1016/j.jfa.2012.06.015},
      review={\MR{2956925}},
}

\bib{LewisRobinSWE}{article}{
      author={Bowen, Lewis},
      author={Tucker-Drob, Robin},
       title={The space of stable weak equivalence classes of
  measure-preserving actions},
        date={2018},
        ISSN={0022-1236},
     journal={J. Funct. Anal.},
      volume={274},
      number={11},
       pages={3170\ndash 3196},
         url={https://doi.org/10.1016/j.jfa.2017.12.003},
      review={\MR{3782991}},
}

\bib{BurKechWC}{article}{
      author={Burton, Peter},
      author={Kechris, Alexander},
       title={Weak containment of measure preserving group actions},
     journal={arXiv:1611.07921},
}

\bib{BurLupTam}{article}{
      author={Burton, Peter},
      author={Lupini, Martino},
      author={Tamuz, Omer},
       title={Weak equivalence of stationary actions and the entropy
  realization problem},
     journal={arXiv:1603.05013},
}

\bib{BurtonKechrisMax}{article}{
      author={Burton, Peter~J.},
      author={Kechris, Alexander~S.},
       title={Invariant random subgroups and action versus representation
  maximality},
        date={2017},
        ISSN={0002-9939},
     journal={Proc. Amer. Math. Soc.},
      volume={145},
      number={9},
       pages={3961\ndash 3971},
         url={https://doi.org/10.1090/proc/13591},
      review={\MR{3665047}},
}

\bib{ChungLi}{article}{
      author={Chung, Nhan-Phu},
      author={Li, Hanfeng},
       title={Homoclinic groups, {IE} groups, and expansive algebraic actions},
        date={2015},
        ISSN={0020-9910},
     journal={Invent. Math.},
      volume={199},
      number={3},
       pages={805\ndash 858},
         url={https://doi.org/10.1007/s00222-014-0524-1},
      review={\MR{3314515}},
}

\bib{ConwayOT}{book}{
      author={Conway, John~B.},
       title={A course in operator theory},
      series={Graduate Studies in Mathematics},
   publisher={American Mathematical Society, Providence, RI},
        date={2000},
      volume={21},
        ISBN={0-8218-2065-6},
      review={\MR{1721402}},
}

\bib{DKP}{article}{
      author={Dykema, Ken},
      author={Kerr, David},
      author={Pichot, Mika\"{e}l},
       title={Sofic dimension for discrete measured groupoids},
        date={2014},
        ISSN={0002-9947},
     journal={Trans. Amer. Math. Soc.},
      volume={366},
      number={2},
       pages={707\ndash 748},
         url={https://doi.org/10.1090/S0002-9947-2013-05987-9},
      review={\MR{3130315}},
}

\bib{GabSew}{article}{
      author={Gaboriau, Damien},
      author={Seward, Brandon},
       title={Cost, $\ell^{2}$-{B}etti numbers and the sofic entropy of some
  algebraic actions},
     journal={to appear in J. Anal. Math.},
}

\bib{GhyDHPHyper}{incollection}{
      author={Ghys, \'{E}tienne},
      author={de~la Harpe, Pierre},
       title={L'action au bord des isom\'{e}tries},
        date={1990},
   booktitle={Sur les groupes hyperboliques d'apr\`es {M}ikhael {G}romov
  ({B}ern, 1988)},
      series={Progr. Math.},
      volume={83},
   publisher={Birkh\"{a}user Boston, Boston, MA},
       pages={135\ndash 163},
         url={https://doi.org/10.1007/978-1-4684-9167-8_8},
      review={\MR{1086656}},
}

\bib{GRMakiOrder}{article}{
      author={Grigorchuk, Rostislav~I.},
      author={Mach\'{i}, Anotnio},
       title={On a group of intermediate growth that acts on a line by
  homeomorphisms},
        date={1993},
        ISSN={0025-567X},
     journal={Mat. Zametki},
      volume={53},
      number={2},
       pages={46\ndash 63},
         url={https://doi.org/10.1007/BF01208318},
      review={\MR{1220809}},
}

\bib{GromovHyper}{incollection}{
      author={Gromov, Mikhail},
       title={Hyperbolic groups},
        date={1987},
   booktitle={Essays in group theory},
      series={Math. Sci. Res. Inst. Publ.},
      volume={8},
   publisher={Springer, New York},
       pages={75\ndash 263},
         url={https://doi.org/10.1007/978-1-4613-9586-7_3},
      review={\MR{919829}},
}

\bib{HayesLW*}{article}{
      author={Hayes, Ben},
       title={Local weak$^{\ast}$-convergence, algebraic actions, and a max-min
  principle},
     journal={arXiv:1809.06990},
}

\bib{Me13}{article}{
      author={Hayes, Ben},
       title={Relative entropy and the {P}insker product formula for sofic
  groups},
     journal={Groups Geom. Dyn.},
        note={to appear.},
}

\bib{Me5}{article}{
      author={Hayes, Ben},
       title={Fuglede--{K}adison determinants and sofic entropy},
        date={2016},
        ISSN={1016-443X},
     journal={Geom. Funct. Anal.},
      volume={26},
      number={2},
       pages={520\ndash 606},
         url={http://dx.doi.org/10.1007/s00039-016-0370-y},
      review={\MR{3513879}},
}

\bib{Me7}{article}{
      author={Hayes, Ben},
       title={Independence tuples and {D}eninger's problem},
        date={2017},
        ISSN={1661-7207},
     journal={Groups Geom. Dyn.},
      volume={11},
      number={1},
       pages={245\ndash 289},
         url={http://dx.doi.org/10.4171/GGD/396},
      review={\MR{3641841}},
}

\bib{Me12}{article}{
      author={Hayes, Ben},
       title={Local and doubly empirical convergence and the entropy of
  algebraic actions of sofic groups},
        date={2019},
        ISSN={0143-3857},
     journal={Ergodic Theory Dynam. Systems},
      volume={39},
      number={4},
       pages={930\ndash 953},
         url={https://doi.org/10.1017/etds.2017.69},
      review={\MR{3916841}},
}

\bib{MeWE}{article}{
      author={Hayes, Ben},
       title={Weak equivalence to {B}ernoulli shifts for some algebraic
  actions},
        date={2019},
        ISSN={0002-9939},
     journal={Proc. Amer. Math. Soc.},
      volume={147},
      number={5},
       pages={2021\ndash 2032},
         url={https://doi.org/10.1090/proc/14060},
      review={\MR{3937679}},
}

\bib{HayesSale}{article}{
      author={Hayes, Ben},
      author={Sale, Andrew~W.},
       title={Metric approximations of wreath products},
        date={2018},
        ISSN={0373-0956},
     journal={Ann. Inst. Fourier (Grenoble)},
      volume={68},
      number={1},
       pages={423\ndash 455},
         url={http://aif.cedram.org/item?id=AIF_2018__68_1_423_0},
      review={\MR{3795485}},
}

\bib{HydeLodLeftOrd}{article}{
      author={Hyde, James},
      author={Lodha, Yash},
       title={Finitely generated infinite simple groups of homeomorphisms of
  the real line},
     journal={Invent. Math.},
        note={to appear},
}

\bib{AdrianRobinWCR}{article}{
      author={Ioana, Adrian},
      author={Tucker-Drob, Robin},
       title={Weak containment rigidity for distal actions},
        date={2016},
        ISSN={0001-8708},
     journal={Adv. Math.},
      volume={302},
       pages={309\ndash 322},
         url={https://doi.org/10.1016/j.aim.2016.07.024},
      review={\MR{3545932}},
}

\bib{TitsAltIvanov}{article}{
      author={Ivanov, Nikolai~V.},
       title={Algebraic properties of the {T}eichm\"{u}ller modular group},
        date={1984},
        ISSN={0002-3264},
     journal={Dokl. Akad. Nauk SSSR},
      volume={275},
      number={4},
       pages={786\ndash 789},
      review={\MR{745513}},
}

\bib{KechrisGA}{book}{
      author={Kechris, Alexander~S.},
       title={Global aspects of ergodic group actions},
      series={Mathematical Surveys and Monographs},
   publisher={American Mathematical Society, Providence, RI},
        date={2010},
      volume={160},
        ISBN={978-0-8218-4894-4},
         url={http://dx.doi.org/10.1090/surv/160},
      review={\MR{2583950}},
}

\bib{KerrPartition}{article}{
      author={Kerr, David},
       title={Sofic measure entropy via finite partitions},
        date={2013},
        ISSN={1661-7207},
     journal={Groups Geom. Dyn.},
      volume={7},
      number={3},
       pages={617\ndash 632},
         url={https://doi.org/10.4171/GGD/200},
      review={\MR{3095712}},
}

\bib{KerrCPE}{article}{
      author={Kerr, David},
       title={Bernoulli actions of sofic groups have completely positive
  entropy},
        date={2014},
        ISSN={0021-2172},
     journal={Israel J. Math.},
      volume={202},
      number={1},
       pages={461\ndash 474},
         url={https://doi.org/10.1007/s11856-014-1077-0},
      review={\MR{3265329}},
}

\bib{KLi}{article}{
      author={Kerr, David},
      author={Li, Hanfeng},
       title={Entropy and the variational principle for actions of sofic
  groups},
        date={2011},
        ISSN={0020-9910},
     journal={Invent. Math.},
      volume={186},
      number={3},
       pages={501\ndash 558},
         url={https://doi.org/10.1007/s00222-011-0324-9},
      review={\MR{2854085}},
}

\bib{KLi2}{article}{
      author={Kerr, David},
      author={Li, Hanfeng},
       title={Soficity, amenability, and dynamical entropy},
        date={2013},
        ISSN={0002-9327},
     journal={Amer. J. Math.},
      volume={135},
      number={3},
       pages={721\ndash 761},
         url={https://doi.org/10.1353/ajm.2013.0024},
      review={\MR{3068400}},
}

\bib{Kieff}{article}{
      author={Kieffer, John~C.},
       title={A generalized {S}hannon-{M}c{M}illan theorem for the action of an
  amenable group on a probability space},
        date={1975},
     journal={Ann. Probability},
      volume={3},
      number={6},
       pages={1031\ndash 1037},
         url={https://doi.org/10.1214/aop/1176996230},
      review={\MR{0393422}},
}

\bib{KimKobLodLeftOrd}{article}{
      author={Kim, Sang-hyun},
      author={Koberda, Thomas},
      author={Lodha, Yash},
       title={Chain groups of homeomorphisms of the interval},
     journal={Ann. Sci. Ec. Norm. Sup\'{e}r},
        note={to appear},
}

\bib{LiLiang2}{article}{
      author={Li, Hanfeng},
      author={Liang, Bingbing},
       title={Sofic mean length},
     journal={Adv. Math.},
        note={to appear},
}

\bib{LiThom}{article}{
      author={Li, Hanfeng},
      author={Thom, Andreas},
       title={Entropy, determinants, and {$L^2$}-torsion},
        date={2014},
        ISSN={0894-0347},
     journal={J. Amer. Math. Soc.},
      volume={27},
      number={1},
       pages={239\ndash 292},
         url={http://dx.doi.org/10.1090/S0894-0347-2013-00778-X},
      review={\MR{3110799}},
}

\bib{LindSchmidtHomoc}{article}{
      author={Lind, Douglas},
      author={Schmidt, Klaus},
       title={Homoclinic points of algebraic {${\bf Z}^d$}-actions},
        date={1999},
        ISSN={0894-0347},
     journal={J. Amer. Math. Soc.},
      volume={12},
      number={4},
       pages={953\ndash 980},
         url={https://doi.org/10.1090/S0894-0347-99-00306-9},
      review={\MR{1678035}},
}

\bib{LindSchmidt1}{incollection}{
      author={Lind, Douglas},
      author={Schmidt, Klaus},
      author={Verbitskiy, Evgeny},
       title={Entropy and growth rate of periodic points of algebraic {$\Bbb
  Z^d$}-actions},
        date={2010},
   booktitle={Dynamical numbers---interplay between dynamical systems and
  number theory},
      series={Contemp. Math.},
      volume={532},
   publisher={Amer. Math. Soc., Providence, RI},
       pages={195\ndash 211},
         url={https://doi.org/10.1090/conm/532/10491},
      review={\MR{2762141}},
}

\bib{LSVHomoc}{article}{
      author={Lind, Douglas},
      author={Schmidt, Klaus},
      author={Verbitskiy, Evgeny},
       title={Homoclinic points, atoral polynomials, and periodic points of
  algebraic {$\Bbb Z^d$}-actions},
        date={2013},
        ISSN={0143-3857},
     journal={Ergodic Theory Dynam. Systems},
      volume={33},
      number={4},
       pages={1060\ndash 1081},
         url={https://doi.org/10.1017/S014338571200017X},
      review={\MR{3082539}},
}

\bib{LindSchmidt2}{article}{
      author={Lind, Douglas},
      author={Schmidt, Klaus},
      author={Ward, Tom},
       title={Mahler measure and entropy for commuting automorphisms of compact
  groups},
        date={1990},
        ISSN={0020-9910},
     journal={Invent. Math.},
      volume={101},
      number={3},
       pages={593\ndash 629},
         url={https://doi.org/10.1007/BF01231517},
      review={\MR{1062797}},
}

\bib{LinnellZDC}{article}{
      author={Linnell, Peter~A.},
       title={Zero divisors and {$L^2(G)$}},
        date={1992},
        ISSN={0764-4442},
     journal={C. R. Acad. Sci. Paris S\'{e}r. I Math.},
      volume={315},
      number={1},
       pages={49\ndash 53},
      review={\MR{1172405}},
}

\bib{Luck}{book}{
      author={L\"{u}ck, W.},
       title={$l^{2}$-invariants: Theory and applications to geometry and
  $k$-theory},
   publisher={Springer-Verlag},
     address={Berlin},
        date={2002},
}

\bib{TitsAltMcCarthy}{article}{
      author={McCarthy, John},
       title={A ``{T}its-alternative'' for subgroups of surface mapping class
  groups},
        date={1985},
        ISSN={0002-9947},
     journal={Trans. Amer. Math. Soc.},
      volume={291},
      number={2},
       pages={583\ndash 612},
         url={https://doi.org/10.2307/2000100},
      review={\MR{800253}},
}

\bib{FCAut}{article}{
      author={Menegazzo, Federico},
      author={Robinson, Derek J.~S.},
       title={A finiteness condition on automorphism groups},
        date={1987},
        ISSN={0041-8994},
     journal={Rend. Sem. Mat. Univ. Padova},
      volume={78},
       pages={267\ndash 277},
         url={http://www.numdam.org/item?id=RSMUP_1987__78__267_0},
      review={\MR{934518}},
}

\bib{Passman}{book}{
      author={Passman, Donald~S.},
       title={The algebraic structure of group rings},
   publisher={Robert E. Krieger Publishing Co., Inc., Melbourne, FL},
        date={1985},
        ISBN={0-89874-789-9},
        note={Reprint of the 1977 original},
      review={\MR{798076}},
}

\bib{PoppArg}{article}{
      author={Popa, Sorin},
       title={Independence properties in subalgebras of ultraproduct {$\rm
  II_1$} factors},
        date={2014},
        ISSN={0022-1236},
     journal={J. Funct. Anal.},
      volume={266},
      number={9},
       pages={5818\ndash 5846},
         url={http://dx.doi.org/10.1016/j.jfa.2014.02.004},
      review={\MR{3182961}},
}

\bib{LPaun}{article}{
      author={P\u{a}unescu, Liviu},
       title={On sofic actions and equivalence relations},
        date={2011},
        ISSN={0022-1236},
     journal={J. Funct. Anal.},
      volume={261},
      number={9},
       pages={2461\ndash 2485},
         url={https://doi.org/10.1016/j.jfa.2011.06.013},
      review={\MR{2826401}},
}

\bib{OrderMCG}{article}{
      author={Rourke, Colin},
      author={Wiest, Bert},
       title={Order automatic mapping class groups},
        date={2000},
        ISSN={0030-8730},
     journal={Pacific J. Math.},
      volume={194},
      number={1},
       pages={209\ndash 227},
         url={https://doi.org/10.2140/pjm.2000.194.209},
      review={\MR{1756636}},
}

\bib{Rud}{book}{
      author={Rudin, Walter},
       title={Fourier analysis on groups},
      series={Wiley Classics Library},
   publisher={John Wiley \& Sons, Inc., New York},
        date={1990},
        ISBN={0-471-52364-X},
         url={https://doi.org/10.1002/9781118165621},
        note={Reprint of the 1962 original, A Wiley-Interscience Publication},
      review={\MR{1038803}},
}

\bib{TitsAltMW}{article}{
      author={Sageev, Michah},
      author={Wise, Daniel~T.},
       title={The {T}its alternative for {${\rm CAT}(0)$} cubical complexes},
        date={2005},
        ISSN={0024-6093},
     journal={Bull. London Math. Soc.},
      volume={37},
      number={5},
       pages={706\ndash 710},
         url={https://doi.org/10.1112/S002460930500456X},
      review={\MR{2164832}},
}

\bib{Schmidt}{book}{
      author={Schmidt, Klaus},
       title={Dynamical systems of algebraic origin},
      series={Progress in Mathematics},
   publisher={Birkh\"{a}user Verlag, Basel},
        date={1995},
      volume={128},
        ISBN={3-7643-5174-8},
      review={\MR{1345152}},
}

\bib{SewardKrieger2}{article}{
      author={Seward, Brandon},
       title={Krieger's finite generator theorem for ergodic actions of
  countable groups $\textrm{II}$},
     journal={to appear in J. Mod. Dyn.},
}

\bib{StuckZimmer}{article}{
      author={Stuck, Garrett},
      author={Zimmer, Robert~J.},
       title={Stabilizers for ergodic actions of higher rank semisimple
  groups},
        date={1994},
        ISSN={0003-486X},
     journal={Ann. of Math. (2)},
      volume={139},
      number={3},
       pages={723\ndash 747},
         url={http://dx.doi.org/10.2307/2118577},
      review={\MR{1283875}},
}

\bib{RobinAF}{article}{
      author={Tucker-Drob, Robin},
       title={Mixing actions of countable groups are almost free},
        date={2015},
        ISSN={0002-9939},
     journal={Proc. Amer. Math. Soc.},
      volume={143},
      number={12},
       pages={5227\ndash 5232},
         url={http://dx.doi.org/10.1090/proc/12467},
      review={\MR{3411140}},
}

\bib{RobinWEClass}{article}{
      author={Tucker-Drob, Robin},
       title={Weak equivalence and non-classifiability of measure preserving
  actions},
        date={2015},
        ISSN={0143-3857},
     journal={Ergodic Theory Dynam. Systems},
      volume={35},
      number={1},
       pages={293\ndash 336},
         url={https://doi.org/10.1017/etds.2013.40},
      review={\MR{3294302}},
}

\bib{Vershik12}{article}{
      author={Vershik, Anatoly~M.},
       title={Totally nonfree actions and the infinite symmetric group},
        date={2012},
        ISSN={1609-3321},
     journal={Mosc. Math. J.},
      volume={12},
      number={1},
       pages={193\ndash 212, 216},
      review={\MR{2952431}},
}

\bib{FreeEntropyDimensionIII}{article}{
      author={Voiculescu, Dan},
       title={The analogues of entropy and of {F}isher's information measure in
  free probability theory. {III}. {T}he absence of {C}artan subalgebras},
        date={1996},
        ISSN={1016-443X},
     journal={Geom. Funct. Anal.},
      volume={6},
      number={1},
       pages={172\ndash 199},
         url={https://doi.org/10.1007/BF02246772},
      review={\MR{1371236}},
}

\end{biblist}
\end{bibdiv}

%
%\bibliographystyle{abrrv}
%\bibliography{research}

\end{document}